\documentclass[letterpaper,12pt]{amsart}

\textwidth=16.00cm 
\textheight=22.00cm 
\topmargin=0.00cm
\oddsidemargin=0.00cm 
\evensidemargin=0.00cm 
\headheight=0cm 
\headsep=0.5cm

\title[L-Infinity optimization to Bergman fans]{L-Infinity optimization to Bergman fans of matroids with an application to phylogenetics}

\author{Daniel Irving Bernstein}
\address{Institute for Data, Systems, and Society, Massachusetts Institute of Technology, 77 Massachusetts Avenue, Cambridge, MA 02139}
\email{\url{dibernst@mit.edu}, \url{bernstein.daniel@gmail.com}}

\textheight=610pt

\usepackage{latexsym,array,delarray,amsthm,amssymb,epsfig,graphicx,multirow}
\usepackage{algorithmic}
\usepackage[section]{algorithm}
\usepackage{amsmath}
\usepackage[numbers]{natbib}
\usepackage{tikz}
\usepackage{verbatim}
\usepackage{graphicx}
\usepackage{subcaption}
\usepackage{mathrsfs}



\theoremstyle{plain}
\newtheorem{thm}{Theorem}[section]
\newtheorem{lemma}[thm]{Lemma}
\newtheorem{prop}[thm]{Proposition}

\newtheorem*{thm*}{Theorem}
\newtheorem*{lemma*}{Lemma}
\newtheorem*{prop*}{Proposition}
\newtheorem*{cor*}{Corollary}
\newtheorem*{conj*}{Conjecture}

\theoremstyle{definition}
\newtheorem{defn}[thm]{Definition}
\newtheorem*{defn*}{Definition}
\newtheorem{ex}[thm]{Example}

\newtheorem{rmk}[thm]{Remark}

\usepackage{thmtools, thm-restate}
\usepackage{longtable}
\usepackage{hyperref}
\usepackage{cleveref}


\newcommand{\rr}{\mathbb{R}}



\newcommand{\calm}{\mathcal{M}}
\newcommand{\caln}{\mathcal{N}}

\newcommand{\calt}{\mathcal{T}}
\newcommand{\cals}{\mathcal{S}}


\newcommand{\ind}{\mbox{$\perp \kern-5.5pt \perp$}}

\newcommand{\rank}{\textnormal{rank}}

\newcommand{\rmax}{\ensuremath \rr_\textnormal{max}}
\newcommand{\tconv}{\textnormal{tconv}}
\newcommand{\tcone}{\textnormal{tcone}}

\newcommand{\berg}{\tilde{\mathcal{B}}}
\newcommand{\nest}{\tilde N}

\renewcommand{\root}{\textnormal{root}}

\newcommand{\des}{\textnormal{Des}}

\tikzstyle{vertex}=[circle, draw, inner sep=0pt, minimum size=6pt, fill=black]
\newcommand{\vertex}{\node[vertex]}

\begin{document}

\begin{abstract}
    Given a dissimilarity map $\delta$ on finite set $X$,
    the set of ultrametrics (equidistant tree metrics)
    which are $l^\infty$-nearest to $\delta$ is a tropical polytope.
    We give an internal description of this tropical polytope
    { which we use to derive a polynomial-time checkable test
    for the condition that all ultrametrics $l^\infty$-nearest to $\delta$ have the same tree structure}.
    It was shown by Ardila and Klivans \cite{ardila-klivans2006}
    that the set of all ultrametrics on a finite set of size $n$
    is the Bergman fan associated to the matroid underlying the complete graph
    on $n$ vertices.
    Therefore, we derive our results in the more general context of Bergman fans
    of matroids.
    This added generality allows our results to be used on
    dissimilarity maps where only a subset of the entries are known.

    \smallskip
    \noindent \textbf{Keywords:} tropical polytopes, Bergman fans, phylogenetics

    \smallskip
    \noindent \textbf{MSC Classes:} 14T05, 05B35
\end{abstract}

\maketitle

\section{Introduction}
    A fundamental problem in phylogenetics is to infer the evolutionary history
    among a set of genes or species from data.
    One approach is to use \emph{distance-based methods}.
    The data required for such an approach is some measure of distance between each pair of species.
    If these distances are computed using some property that is expected to change in proportion to time elapsed,
    then one often assumes that the pairwise distances approximate an ultrametric.
    Finding a best-fit ultrametric to an arbitrary dissimilarity map is therefore
    an important computational problem.
    For background, see \cite[Chapter~7]{Semple2003}.

    {
    A major source of difficulty in this endeavor stems from the fact that
    two of the most basic sets with which one would reason about distance-based phylogenetics,
    namely the set of tree metrics and the set of ultrametrics,
    do not interact with Euclidean geometry in a clean way,
    thus making naive application of traditional statistical methods problematic.
    Beginning with work of Billera, Holmes, and Vogtman \cite{billera2001geometry},
    the past two decades have seen much research into developing and studying geometric theories
    that interact nicely with the sets of tree metrics and ultrametrics,
    with the hope that reinterpreting traditional statistical theory and methods in these new geometries
    will lead to something useful.
    Speyer and Sturmfels \cite{Speyer} and Ardila and Klivans \cite{ardila-klivans2006}
    showed that the sets of tree metrics and ultrametrics are tropical varieties,
    thus giving the first indication that tropical geometry might offer useful tools for phylogenetics.
    }

    {
    Since then, researchers have been exploring tropical geometry's potential 
    as a fundamental theory on which to develop statistical methods designed specifically for phylogenetic applications.
    Tropical geometry is a geometric theory that one naturally obtains
    when redefining arithmetic over $\rr \cup \{-\infty\}$ so that the sum of two
    numbers is their maximum and the product is their sum (in the usual sense).
    The natural choice for a metric in tropical geometry is the $l^\infty$-metric.
    A recent preprint of Lin, Monod, and Yoshida \cite{lin2018tropical} shows that
    the set of phylogenetic trees endowed with the tropically projectivized $l^\infty$-metric,
    which they call \emph{palm tree space},
    has many features of Euclidean space that enable classical statistical theory to work.
    In particular, palm tree space supports probability measures and a reasonable theory of linear algebra.
    }In \cite{lin-sturmfels2016},
    Lin, Sturmfels, Tang, and Yoshida compare tropical convexity
    to the convexity theory of Billera, Holmes, and Vogtman \cite{billera2001geometry}
    with regard to their potential as theoretical frameworks
    for developing algorithms to reduce
    the complexity of a dataset consisting of several ultrametrics on the same taxa.
    { They show that in the convexity theory of \cite{billera2001geometry},
    a triangle (i.e. the convex hull of three points) can have arbitrarily high dimension,
    whereas triangles are always two-dimensional in the projective tropical setting \cite{develin-sturmfels2004}.}
    In \cite{yoshida-zhang2017tropical},
    Yoshida, Zhang, and Zhang develop a theory of tropical principal component analysis.
    
    { A recurring frustration one encounters when tropicalizing a classical object
    is that uniqueness guarantees may disappear.
    In order for tropical geometry to be considered a reasonable mathematical foundation for phylogenetic analysis,
    failures of uniqueness must be understood when they have potential to cause problems.
    Lin and Yoshida \cite{lin2018tropicalFermat} studied non-uniqueness of the tropical Fermat-Weber point,
    which is analogous to the geometric mean from Euclidean geometry.
    They showed that the set of all tropical Fermat-Weber points is a (classical) polytope,
    and gave a necessary condition for uniqueness of the tropical Fermat-Weber point.
    In this paper, we provide analogous results for
    non-uniqueness of the ultrametric that is nearest to a given dissimilarity map in the $l^\infty$-metric.
    Colby Long and this author began a study of this, and other related phenomena, in \cite{bernstein2017infinity}.}
    The main mathematical results of \cite{bernstein2017infinity} concern the non-uniqueness of the point in a (non-tropical)
    linear subspace of $\rr^n$ that is $l^\infty$-nearest to a given $x \in \rr^n$.
    In that paper, it is also shown that there exist dissimilarity maps in $\rr^{\binom{n}{2}}$
    whose set of $l^\infty$-nearest ultrametrics contains $\frac{1}{3}\cdot(2n-3)!!$ different
    tree topologies.

    This paper builds on some of these observations.
    In particular, Proposition \ref{prop:ultrametricTropicalPolytope} says that the set of
    ultrametrics $l^\infty$ nearest to a given dissimilarity map is a tropical polytope,
    Theorem \ref{thm:treesAlgorithm} provides an internal description,
    and Theorem \ref{thm:sameTopologyTest} gives a polynomial-time checkable condition,
    telling us exactly when all nearest ultrametrics have the same tree structure.
    From an phylogenetics perspective,
    this is useful information since the tree structure describes the evolutionary
    relationship among the species being studied.

    We derive our results in a more general context.
    Ardila and Klivans showed that the set of ultrametrics on $n$ species is the Bergman fan
    associated to the matroid underlying the complete graph on $n$ vertices \cite{ardila-klivans2006}.
    Therefore we can view the problem of finding the set of $l^\infty$-nearest ultrametrics
    as a special case of the problem of finding
    the set of $l^\infty$-nearest points in the Bergman fan of a matroid.
    This latter set is also a tropical polytope (Proposition \ref{prop:tropicalPolytope})
    and Theorem \ref{thm:verticesAlgorithm} provides an internal description.
    Feichtner and Sturmfels describe a refinement of the Bergman fan underlying a matroid
    \cite{feichtner-sturmfels2005} which can be used to generalize the concept of tree topology.
    In light of this, Theorem \ref{thm:verticesAlgorithm} is the
    straightforward generalization of Theorem \ref{thm:treesAlgorithm}.

    {The added generality of Bergman fans of matroids has a potential application in phylogenetics.}
    Namely, if one wishes to reconstruct a phylogeny from partial distance data
    where observed distances correspond to the edges of some graph $G$,
    then one can begin by optimizing to the Bergman fan of $G$'s matroid
    which will give a partial ultrametric (see Proposition \ref{prop:completion}).
    This reconstruction problem is a special case of the \emph{sandwich to ultrametric problem}
    studied by Farach, Kannan, and Warnow in \cite{farach1993robust}.
    { The added generality is also interesting from a pure tropical geometry perspective.
    In particular, given the Bergman fan $\berg(\calm)$ of a matroid $\calm$,
    the question of describing points in $\berg(\calm)$ that are tropically nearest to a given $x \notin \berg(\calm)$
    is in some sense the tropical analog of finding the point of a (classical) linear space $L$ that is Euclidean-nearest
    to a given $x \notin L$.
    }


    Just as with ordinary polytopes, tropical polytopes admit \emph{external descriptions}
    as the intersection of tropical half-spaces,
    as well as internal descriptions \cite{gaubert2011minimal}.
	Theorem~7.1 in \cite{akian2011best} can be used to obtain an external description
	of the tropical polytopes we are interested in.
	However, an internal description is more advantageous for our purposes
	because it gives us a way to check whether all ultrametics
	$l^\infty$-nearest to a given dissimilarity map have the same tree topology
	(see Theorem \ref{thm:sameTopologyTest} and Proposition \ref{prop:multipletopologies}).  

    This paper is organized as follows.
    Section \ref{sec:tropicalConvexity} gives the necessary background on tropical convexity.
    Section \ref{sec:phylogenetics} contains Theorem~\ref{thm:treesAlgorithm},
    which is an internal description
    of the tropical polytope consisting of the ultrametrics that are $l^\infty$-nearest
    to a given dissimilarity map.
    {
    A proof is deferred until Section \ref{sec:bergmanOptimization}.
    Section~\ref{sec:phylogenetics} also states and proves Theorem \ref{thm:sameTopologyTest},
    which provides a polynomial-time method for checking that all ultrametrics $l^\infty$-nearest
    to a given dissimilarity map have the same tree topology}.
    Section \ref{sec:nestedSets} uses results of Feichtner and Sturmfels \cite{feichtner-sturmfels2005} to 
    generalize the tree structure underlying
    an ultrametric to a similar combinatorial structure underlying an element
    of the Bergman fan of an arbitrary matroid.
    This combinatorial structure is used in Section \ref{sec:bergmanOptimization}
    to generalize Theorem~\ref{thm:treesAlgorithm} to get Theorem~\ref{thm:verticesAlgorithm}.
    Section \ref{sec:data} applies Theorem~\ref{thm:treesAlgorithm} to a biological dataset.
\section*{Acknowledgments}
    The author is grateful to Colby Long and Seth Sullivant
    for many helpful conversations and for feedback on early drafts,
    and to several anonymous referees who provided thoughtful feedback that greatly improved this manuscript.
    This work was partially supported by the US National Science Foundation (DMS 0954865 and 1802902) and the David and Lucille Packard Foundation.


\section{Preliminaries on Tropical Convexity}\label{sec:tropicalConvexity}
This section reviews the necessary concepts from tropical convexity.
There are at least two different sets of basic definitions related to tropical convexity.
One is used in \cite{develin-sturmfels2004}, and the other in \cite{allamigeon-gaubert-goubault2010}.
We adhere to the conventions of the latter, as their definition of tropical polytope
is more natural in our context.
\\
\indent
The \emph{tropical semiring}, also known as the max-plus algebra, is the set $\rr \cup \{-\infty\}$ together with the operations
$a \oplus b := \max\{a,b\}$ and $a \odot b := a + b$.
We denote this semiring by $\rmax$.
The additive identity of $\rmax$ is $-\infty$
and the multiplicative identity is $0$.
The set $\rmax^n$ is an $\rmax$-semimodule where for $x,y \in \rmax^n$ and $\alpha \in \rmax$,
$(x \oplus y)_i := x_i \oplus y_i$ and $(\alpha \odot x)_i := \alpha + x_i$.
If $A \in \rmax^{m \times n}$ is a matrix and $x \in \rmax^n$,
then the product $A \odot x$ is the usual matrix product,
but with multiplication and addition interpreted tropically.
That is, if $A$ has columns $a_1,\dots,a_n$, then
\[
	A \odot x:= \bigoplus_{j=1}^n x_j \odot a_j.
\]

Several notions from ordinary convexity theory have tropical analogs.
We say that $P \subseteq \rmax$ is a \emph{tropical cone} if whenever $x,y \in P$ and $\lambda,\mu \in \rmax$,
$\lambda \odot x \oplus \mu \odot y \in P$.
If this only holds with the restriction that $\lambda \oplus \mu = 0$,
then we say that $P$ is \emph{tropically convex}.
A \emph{tropical polyhedron} is a set of the form
\[
	\{x \in \rmax^n : A \odot x \oplus b \ge C \odot x \oplus d\}
\]
where $A,C \in \rmax^{m \times n}$ and $b,d \in \rmax^m$.
We denote this set $P(A,b,C,d)$.
It follows from the discussion below that $P(A,b,C,d)$ is tropically convex.
When $b = d = (-\infty,\dots,-\infty)^T$ then $P(A,b,C,d)$ is a tropical cone
and we call it a \emph{tropical polyhedral cone}.
Bounded tropical polyhedra are called \emph{tropical polytopes}.
Given $V \subseteq \rmax$,
$\tconv(V)$ is the \emph{tropical convex hull} of $V$.
That is,
\[
	\tconv(V) := \{\lambda \odot x + \mu \odot y : x,y \in V, \lambda \oplus \mu = 0\}.
\]
We define the \emph{tropical conic hull} $\tcone(V)$ similarly.
Gaubert and Katz showed in \cite{gaubert-katz2007} that any tropical polytope (cone) $P$
can be expressed as the tropical convex (conic) hull of a finite set $V$.
Conversely, Gaubert showed that if $V \subseteq \rr^n$ is a finite set and $P = \tcone(V)$,
then $P$ is a tropical polyhedral cone \cite[Corollary 1.2.5]{gaubert1992theorie}
The analogous result for $P = \tconv(V)$ follows from results in \cite{gaubert-katz2007}.
{ There exists a minimal such $V$ (see \cite[Theorem 18]{butkovivc2007generators} or \cite[Theorem 3.1]{gaubert-katz2007}) called the \emph{tropical vertices (extreme rays) of $P$}.
See also \cite[Theorem~2]{gaubert2011minimal}.}


\section{Results for phylogenetics: l-infinity nearest ultrametrics}\label{sec:phylogenetics}
This section presents the results of Section \ref{sec:bergmanOptimization}
in the context of our main motivation.
In particular, Theorem \ref{thm:treesAlgorithm} gives a combinatorial description
of a finite set of ultrametrics whose tropical convex hull
is the set of ultrametrics nearest in the $l^\infty$-norm to a given dissimilarity map.
{ We also use Theorem \ref{thm:treesAlgorithm} to derive Theorem \ref{thm:sameTopologyTest},
which gives a polynomial-time checkable condition guaranteeing that all ultrametrics $l^\infty$
nearest to a given dissimilarity map have the same tree topology.}
We begin by reviewing the necessary background from \cite{Semple2003} about ultrametrics,
which are a special type of tree metric.
\\
\indent
Let $X = \{x_1,\dots,x_n\}$ be a finite set.
A \emph{dissimilarity map} on $X$ is a function $\delta: X \times X \rightarrow \rr$
such that $\delta(x,x) = 0$ and $\delta(x,y) = \delta(y,x)$ for all $x,y \in X$.
Note that we allow dissimilarity maps to take negative values.
We can express a dissimilarity map $d$
as a matrix $D$ where $D_{ij} = \delta(x_i,x_j)$.
Note that $D$ is symmetric with zeros along the diagonal.
A \emph{rooted $X$-tree} is a tree with leaf set $X$ 
where one interior (i.e. non-leaf) vertex has been designated
the ``root.''
We use the notation $\root(T)$ for the root of a rooted $X$-tree $T$.
A \emph{descendant} of a vertex $v$ in a rooted tree $T$
is a node $u \neq v$ such that the unique path from $u$ to $\root(T)$ contains $v$.
Note that all non-root vertices are descendants of $\root(T)$.
The set of descendants of a vertex $v$ in a rooted tree $T$ is denoted $\des_T(v)$.
We let $T^\circ$ denote the set of interior vertices of $T$.

Let $T$ be a rooted $X$-tree and let
$\alpha:T^\circ \rightarrow \rr$ be a weighting of the internal nodes of $T$.
We say that $\alpha$ is \emph{compatible with $T$} if $\alpha(u) \le \alpha(v)$ whenever $u \in \des_T(v)$.
The pair $(T,\alpha)$ gives rise to a dissimilarity map $\delta_{T,\alpha}$ on $X$
defined by $\delta_{T,\alpha}(x_i,x_j) := \alpha(v)$ where $v \in T^\circ$
is the vertex nearest to $\root(T)$ in the unique path from $x_i$ to $x_j$.
Given a dissimilarity map $\delta$ on $X$,
if we can express $\delta$ as $\delta_{T,\alpha}$ for some $X$-tree $T$
and compatible internal node weighting $\alpha$,
then $\delta$ is said to be an \emph{ultrametric}.
If we require that $\alpha(u) < \alpha(v)$ whenever $u \in \des_T(v)$,
then the rooted $X$-tree $T$ is unique
and called the \emph{(tree) topology} of $\delta$.
{ Some readers from other areas of mathematics take issue with this use of the word ``topology,''
but it is standard in the phylogenetics literature \cite{Semple2003}.}
Figure \ref{fig:ultrametricExample} shows an ultrametric
along with an interior-vertex-weighted tree displaying it.
\\
\indent
Some readers may be familiar with a seemingly different definition of ultrametric
which says that $\delta: X \times X \rightarrow \rr$ is an ultrametric if and only if
for every triple $x,y,z \in X$ of distinct elements,
the maximum of $\delta(x,y),\delta(x,z),\delta(y,z)$ is attained twice.
This is equivalent to the definition given above.
Sometimes the requirement that $x,y,z$ be distinct is relaxed.
This gives the more restricted class of ultrametrics,
consisting only of ultrametrics representable as $\delta_{T,\alpha}$ for nonnegative $\alpha$
compatible with $T$.
See \cite[Chapter~7]{Semple2003} for details.
We use the more inclusive definition of an ultrametric because it
simplifies connections with tropical geometry.
\\
\indent
A \emph{polytomy} of a rooted tree is either a non-root internal node of degree at least four,
or the root node if it has degree at least three.
We say that a rooted tree is \emph{binary} if it does not have any polytomy.
A \emph{resolution} of a tree $T$ is a binary tree $T'$
such that $T$ can be obtained from $T'$ via a (possibly empty) series of edge contractions \cite{Semple2003}.
Note that if the topology underlying $\delta$ is not binary,
then there will be multiple resolutions of the topology of $\delta$.
Figure \ref{fig:polytomyTree} illustrates these concepts
by representing a single ultrametric
in three ways - on its topology
and on two different resolutions.

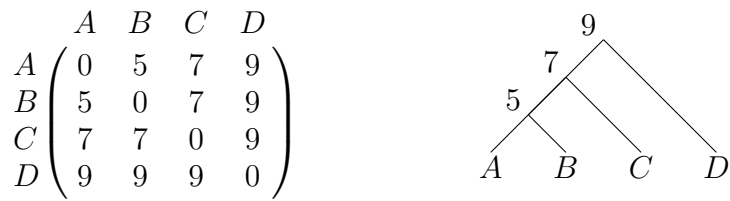
\begin{figure}
    \begin{subfigure}{0.49\textwidth}\centering
    	\bordermatrix{
    		~&A&B&C&D\cr
    		A&0&5&7&9\cr
    		B&5&0&7&9\cr
    		C&7&7&0&9\cr
    		D&9&9&9&0\cr
    	}
    \end{subfigure}
    \begin{subfigure}{0.49\textwidth}\centering
    	\begin{tikzpicture}
            \node (a) at (0,-0.2){$A$};
            \node (b) at (1,-0.2){$B$};
            \node (c) at (2,-0.2){$C$};
            \node (d) at (3,-0.2){$D$};
            \node (ab) at (1/2-0.2,1/2+0.2){$5$};
            \node (abc) at (1-0.2,1+0.2){$7$};
            \node (abcd) at (3/2-0.2,3/2+0.2){$9$};
            \draw (0,0) -- (1/2,1/2);
            \draw (1,0) -- (1/2,1/2);
            \draw (1/2,1/2) -- (1,1);
            \draw (2,0) -- (1,1);
            \draw (3,0) -- (3/2,3/2);
            \draw (1/2,1/2) -- (3/2,3/2);
    	\end{tikzpicture}
    \end{subfigure}
    \caption{An ultrametric on $\{A,B,C,D\}$ and its representation on a rooted tree.}\label{fig:ultrametricExample}
\end{figure}

\begin{figure}
    \begin{subfigure}{0.32\textwidth}\centering
        \begin{tikzpicture}
            \node (a) at (0,-0.2){$A$};
            \node (b) at (1,-0.2){$B$};
            \node (c) at (2,-0.2){$C$};
            \node (d) at (3,-0.2){$D$};
            \node (e) at (4,-0.2){$E$};
            \node (abc) at (1-0.2,1+0.2){$1$};
            \node (abcde) at (2-0.2,2+0.2){$2$};
            \draw (0,0) -- (1,1);
            \draw (1,0) -- (1,1);
            \draw (2,0) -- (1,1);
            \draw (1,1) -- (2,2);
            \draw (3,0) -- (2,2);
            \draw (4,0) -- (2,2);
        \end{tikzpicture}
    \end{subfigure}
    \begin{subfigure}{0.32\textwidth}\centering
        \begin{tikzpicture}
            \node (a) at (0,-0.2){$A$};
            \node (b) at (1,-0.2){$B$};
            \node (c) at (2,-0.2){$C$};
            \node (d) at (3,-0.2){$D$};
            \node (e) at (4,-0.2){$E$};
            \node (ab) at (1/2-0.2,1/2+0.2){$1$};
            \node (de) at (7/2+0.2,1/2+0.2){$2$};
            \node (abc) at (1-0.2,1+0.2){$1$};
            \node (abcde) at (2-0.2,2+0.2){$2$};
            \draw (0,0) -- (1/2,1/2);
            \draw (1,0) -- (1/2,1/2);
            \draw (1/2,1/2) -- (1,1);
            \draw (2,0) -- (1,1);
            \draw (1,1) -- (2,2);
            \draw (3,0) -- (7/2,1/2);
            \draw (4,0) -- (7/2,1/2);
            \draw (7/2,1/2) -- (2,2);
        \end{tikzpicture}
    \end{subfigure}
    \begin{subfigure}{0.32\textwidth}\centering
        \begin{tikzpicture}
            \node (a) at (0,-0.2){$A$};
            \node (b) at (2,-0.2){$B$};
            \node (c) at (1,-0.2){$C$};
            \node (d) at (3,-0.2){$D$};
            \node (e) at (4,-0.2){$E$};
            \node (ab) at (1/2-0.2,1/2+0.2){$1$};
            \node (abc) at (1-0.2,1+0.2){$1$};
            \node (abcd) at (3/2-0.2,3/2+0.2){$2$};
            \node (abcde) at (2-0.2,2+0.2){$2$};
            \draw (0,0) -- (1/2,1/2);
            \draw (1,0) -- (1/2,1/2);
            \draw (1/2,1/2) -- (1,1);
            \draw (2,0) -- (1,1);
            \draw (1,1) -- (2,2);
            \draw (3,0) -- (3/2,3/2);
            \draw (4,0) -- (7/2,1/2);
            \draw (7/2,1/2) -- (2,2);
        \end{tikzpicture}
    \end{subfigure}
    \caption{An ultrametric whose topology has two polytomies.
    Above, we see it represented on its topology and on two different resolutions.}\label{fig:polytomyTree}
\end{figure}
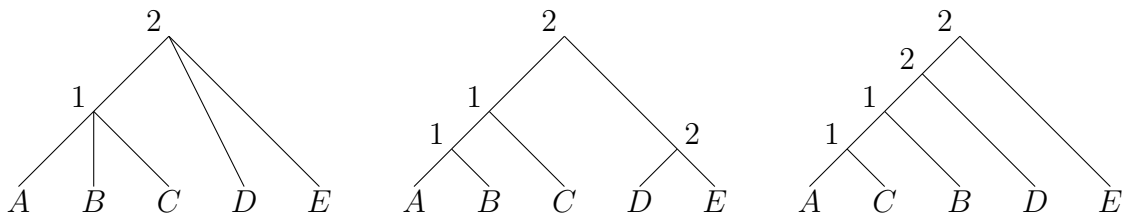

Given two dissimilarity maps $\delta_1,\delta_2$ on $X$
with associated matrices $D_1,D_2$,
we define the \emph{$l^\infty$ distance between $\delta_1$ and $\delta_2$},
denoted $\|\delta_1-\delta_2\|_\infty$,
to be the greatest absolute value among entries in $D_1-D_2$.
An important question that comes up in phylogenetics is then:
given a dissimilarity map $\delta$,
which ultrametrics are nearest to $\delta$ in the $l^\infty$ metric?
Chepoi and Fichet \cite{chepoi} give an algorithm for producing a single
ultrametric $l^\infty$-nearest to a given dissimilarity map
which we now describe.
We denote the all-ones vector or dissimilarity map by ${\bf 1}$.

\begin{thm}[{\cite[Corollary 1. See also discussion on p. 607]{chepoi}}]
\label{thm:chepoi}
    Let $\delta$ be a dissimilarity map on a finite set $X$.
    Then the following algorithm produces an ultrametric on $X$ that
    is nearest to $\delta$ in the $l^\infty$ norm.
    \begin{enumerate}
        \item Draw the complete graph on vertex set $X$.
        \item Label the edge between $x$ and $y$ by $\delta(x,y)$.
        \item Define $\delta_u: X \times X \rightarrow \rr$ so that for each $x,y \in X \times X$,
        \[
        	\delta_u(x,y) := \min_P
          	\left(\max_{\text{edges }(i,j) \text{ of } P} \delta(i,j)\right)
        \]
        where the minimum is taken over all paths $P$ from $x$ to $y$.
        \item Define $d := \|\delta_u-\delta\|_\infty$.
        Then $\delta_u + \frac{d}{2}{\bf 1}$ is an ultrametric that is $l^\infty$-nearest to $\delta$.
    \end{enumerate}
\end{thm}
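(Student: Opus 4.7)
The proof splits into two halves: verify that the algorithm's output is an ultrametric at $l^\infty$-distance exactly $\frac{d}{2}$ from $\delta$, and then show that no ultrametric can be closer than $\frac{d}{2}$.

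For the upper bound, I would first check that $\delta_u$ itself is an ultrametric. Using the ``max attained twice on every triple'' characterization, this reduces to verifying $\delta_u(x,z) \le \max\{\delta_u(x,y),\delta_u(y,z)\}$: concatenating the minimizing bottleneck paths from $x$ to $y$ and from $y$ to $z$ produces a candidate $x$-to-$z$ path whose maximum edge weight is at most the larger of the two bottlenecks. Adding a constant to every off-diagonal entry preserves this property, so $\delta_u + \frac{d}{2}\mathbf{1}$ is also an ultrametric. Because the single edge $(x,y)$ is itself a path, $\delta_u(x,y) \le \delta(x,y)$ pointwise, so the entries of $\delta - \delta_u$ lie in $[0,d]$ and the shift by $-\frac{d}{2}\mathbf{1}$ centers them in $[-\frac{d}{2},\frac{d}{2}]$, giving $\|\delta - (\delta_u + \frac{d}{2}\mathbf{1})\|_\infty = \frac{d}{2}$.

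The substantive half is the lower bound. Fix any ultrametric $u$ and set $\epsilon := \|u - \delta\|_\infty$. The key step is to iterate the ultrametric inequality along an arbitrary path: for any $x$-to-$y$ path $P = (v_0, v_1, \ldots, v_k)$, induction on $k$ yields $u(x,y) \le \max_i u(v_{i-1},v_i)$, and since each $u(v_{i-1},v_i) \le \delta(v_{i-1},v_i) + \epsilon$, we obtain $u(x,y) \le \max_i \delta(v_{i-1},v_i) + \epsilon$. Minimizing over $P$ gives $u(x,y) \le \delta_u(x,y) + \epsilon$. Combined with the trivial lower estimate $u(x,y) \ge \delta(x,y) - \epsilon$, this produces $\delta(x,y) - \delta_u(x,y) \le 2\epsilon$ for every pair $(x,y)$; taking the maximum over pairs yields $d \le 2\epsilon$, i.e., $\epsilon \ge \frac{d}{2}$.

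I expect the main obstacle to be this lower bound, specifically the observation that $\delta_u$ controls every ultrametric from above within $\epsilon$, i.e.\ $u \le \delta_u + \epsilon$. This is essentially the statement that $\delta_u$ is the subdominant ultrametric of $\delta$, and the iteration of the strong triangle inequality along a minimizing bottleneck path is the one nontrivial ingredient. The remainder is a standard sandwich estimate together with the observation that adding a constant multiple of $\mathbf{1}$ preserves ultrametricity.
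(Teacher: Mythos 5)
Your proof is correct, and it is worth noting that the paper itself does not prove Theorem~\ref{thm:chepoi} at all: the statement is quoted from Chepoi--Fichet \cite{chepoi}, and the optimality claim is instead recovered as a special case of Lemma~\ref{lem:maximalClosestMUltrametric}, whose proof runs through the subdominant $\calm$-ultrametric $x^\calm$ (the coordinate-wise maximal ultrametric below $x$, whose existence and computation via the min-max formula is imported from Ardila \cite{ardila2004}): if some ultrametric $w$ were strictly closer than $\tfrac{1}{2}d(x,x^\calm)$, then $w - d(x,w)\cdot\mathbf{1}$ would lie below $x$ but exceed $x^\calm$ in the coordinate where $x - x^\calm$ is maximal, contradicting subdominance. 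You prove the same lower bound from scratch in the complete-graph setting by iterating the strong triangle inequality along a path to get $u \le \delta_u + \epsilon$ for every ultrametric $u$ with $\|u-\delta\|_\infty = \epsilon$; taking $\epsilon = 0$ this is precisely the statement that $\delta_u$ is the subdominant ultrametric, so the two arguments rest on the same core fact, accessed differently. What your route buys is a self-contained, elementary proof that does not presuppose existence or maximality of the subdominant ultrametric; what it gives up is generality, since the bottleneck-path formula has no verbatim analogue for an arbitrary matroid, whereas the paper's subdominance argument is exactly the one that extends to Bergman fans (there one replaces paths by Ardila's cocircuit description). Two small points to tidy: in the path induction you should reduce to simple paths (repeated vertices make the ``distinct triple'' form of the ultrametric condition inapplicable, and under the paper's sign conventions $u(x,y)$ may be negative), and you should note $\delta_u(x,x)=0$ so the diagonal is unaffected by the shift; both are routine.
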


Although the algorithm given by Theorem \ref{thm:chepoi}
produces only one ultrametric,
there can be multiple ultrametrics that are $l^\infty$-nearest
to a given dissimilarity map.
Figure \ref{fig:multipleClosest} shows a dissimilarity map alongside two
$l^\infty$-nearest ultrametrics with differing topologies.

\begin{defn}
    We call the ultrametric given by Theorem \ref{thm:chepoi}
    the \emph{maximal closest ultrametric to $\delta$} and denote it symbolically as $\delta_m$.
\end{defn}

That $\delta_m$ issue {coordinatewise-}maximal among ultrametrics nearest to $\delta$ is shown in \cite{chepoi},
and also follows from Lemma \ref{lem:maximalClosestMUltrametric}(\ref{item:maximal}).

\begin{figure}
    \begin{subfigure}{0.32\textwidth}\centering
        \bordermatrix{
            ~&A&B&C&D\cr
            A&0&2&4&6\cr
            B&2&0&8&10\cr
            C&4&8&0&12\cr
            D&6&10&12&0\cr
        }
    \end{subfigure}
    \begin{subfigure}{0.32\textwidth}\centering
        \begin{tikzpicture}
            \node (a) at (0,-0.2){$A$};
            \node (b) at (1,-0.2){$B$};
            \node (c) at (2,-0.2){$C$};
            \node (d) at (3,-0.2){$D$};
            \node (ab) at (1/2-0.2,1/2+0.2){$5$};
            \node (abc) at (1-0.2,1+0.2){$7$};
            \node (abcd) at (3/2-0.2,3/2+0.2){$9$};
            \draw (0,0) -- (1/2,1/2);
            \draw (1,0) -- (1/2,1/2);
            \draw (1/2,1/2) -- (1,1);
            \draw (2,0) -- (1,1);
            \draw (3,0) -- (3/2,3/2);
            \draw (1/2,1/2) -- (3/2,3/2);
        \end{tikzpicture}
    \end{subfigure}
    \begin{subfigure}{0.32\textwidth}\centering
        \begin{tikzpicture}
            \node (a) at (0,-0.2){$A$};
            \node (b) at (1,-0.2){$C$};
            \node (c) at (2,-0.2){$B$};
            \node (d) at (3,-0.2){$D$};
            \node (ab) at (1/2-0.2,1/2+0.2){$4$};
            \node (abc) at (1-0.2,1+0.2){$5$};
            \node (abcd) at (3/2-0.2,3/2+0.2){$9$};
            \draw (0,0) -- (1/2,1/2);
            \draw (1,0) -- (1/2,1/2);
            \draw (1/2,1/2) -- (1,1);
            \draw (2,0) -- (1,1);
            \draw (3,0) -- (3/2,3/2);
            \draw (1/2,1/2) -- (3/2,3/2);
        \end{tikzpicture}
    \end{subfigure}
    \caption{A dissimilarity map and two $l^\infty$-nearest ultrametrics with different topologies.}\label{fig:multipleClosest}
\end{figure}
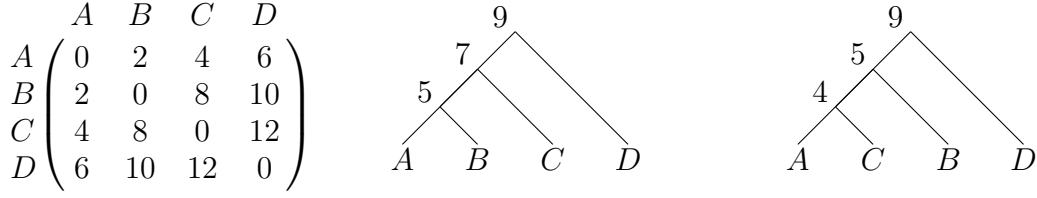

\begin{prop}\label{prop:ultrametricTropicalPolytope}
    Let $\delta$ be a dissimilarity map on a finite set $X$.
    The set of ultrametrics that are nearest to $\delta$ in the $l^\infty$-norm is a tropical polytope.
\end{prop}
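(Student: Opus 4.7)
The plan is to exhibit the set of $l^\infty$-nearest ultrametrics as the intersection of two tropically polyhedral sets, one of which is classically bounded, and thereby to conclude that it is a bounded tropical polyhedron, i.e., a tropical polytope.

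First, let $d := \min\{\|u - \delta\|_\infty : u \text{ is an ultrametric on } X\}$. This minimum is attained by Theorem \ref{thm:chepoi}. Then the set of nearest ultrametrics is exactly $\mathcal{U} \cap B$, where $\mathcal{U}$ denotes the set of all ultrametrics on $X$ and
\[
    B := \{y \in \rr^{\binom{X}{2}} : \delta(x_i,x_j) - d \le y_{ij} \le \delta(x_i,x_j) + d \text{ for all } i < j\}
\]
is the closed $l^\infty$-ball of radius $d$ around $\delta$; the inclusion $\supseteq$ holds because any ultrametric within distance $d$ of $\delta$ is automatically a minimizer.

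Next I would verify that each of $\mathcal{U}$ and $B$ is a tropical polyhedron. For $\mathcal{U}$, the theorem of Ardila and Klivans \cite{ardila-klivans2006} identifies it with the Bergman fan of the graphical matroid of the complete graph on $X$, and the more general Proposition \ref{prop:tropicalPolytope} of the paper gives the tropical polyhedral description I need, so I would simply invoke that. For $B$, each single-coordinate bound $y_{ij} \le \delta(x_i,x_j) + d$ or $y_{ij} \ge \delta(x_i,x_j) - d$ is a tropical linear inequality of the form $A \odot y \oplus b \ge C \odot y \oplus d$ (a constant on one side versus a single coordinate on the other), so $B$ is a tropical polyhedron.

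Finally, the intersection of two tropical polyhedra is again a tropical polyhedron, obtained by concatenating the two inequality systems. Since $B$ is bounded in the usual Euclidean sense, the intersection $\mathcal{U} \cap B$ is bounded, hence a tropical polytope. The only step that is not essentially formal is the tropical polyhedrality of $\mathcal{U}$, which rests on the Ardila–Klivans identification together with the general Bergman-fan statement handled in Section \ref{sec:bergmanOptimization}; deferring to that later proposition is the cleanest presentation.
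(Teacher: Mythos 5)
Your argument is essentially the paper's own proof: the paper defers to the more general Proposition~\ref{prop:tropicalPolytope}, whose proof likewise writes the set of nearest points as the intersection of the Bergman fan (a tropical polyhedron) with the $l^\infty$-ball around the given point, and concludes because a bounded tropical polyhedron is by definition a tropical polytope. The only cosmetic correction is that the tropical polyhedrality of the set of ultrametrics should be cited from Proposition~\ref{prop:linearpolytope} together with Theorem~\ref{thm:ardilaUltrametric}, rather than from Proposition~\ref{prop:tropicalPolytope}, which is the general form of the statement being proved.
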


We will prove Proposition \ref{prop:ultrametricTropicalPolytope} in a more general setting later
(see Proposition \ref{prop:tropicalPolytope}).
Given a dissimilarity map $\delta: X \times X \rightarrow \rr$,
Theorem \ref{thm:treesAlgorithm} describes a finite set of ultrametrics
whose tropical convex hull is the set of ultrametrics $l^\infty$-nearest to $\delta$.
The statement of Theorem \ref{thm:treesAlgorithm} requires the following definition.


\begin{defn}\label{defn:sliding}
    Let $\delta: X \times X \rightarrow \rr$
    be a dissimilarity map and let $u$ be an ultrametric
    that is closest to $\delta$ in the $l^\infty$-norm.
    Let $T$ be a resolution of the topology of $u$
    and let $\alpha: T^\circ \rightarrow \rr$
    be a compatible weighting of $T$'s internal nodes such that $\delta_{T,\alpha} = u$.
    An internal node $v$ of $T$ is said to be \emph{mobile} if there exists an
    ultrametric $\hat{u} \neq u$, expressible as $\hat u = \delta_{T,\hat{\alpha}}$
    for $\hat \alpha: T^\circ \rightarrow \rr$ such that
    \begin{enumerate}
        \item $\hat{u}$ is also nearest to $\delta$ in the $l^\infty$-norm,
        \item $\hat{\alpha}(x) = \alpha(x)$ for all internal nodes $x \neq v$, and
        \item $\hat{\alpha}(v) < \alpha(v)$.
    \end{enumerate}
    In this case, we say that $\hat u$ is obtained from $u$ by \emph{sliding $v$ down}.
    If moreover $v$ is no longer mobile in $\delta_{T,\hat{\alpha}}$, i.e. if
    $\hat{\alpha}(v) = \max\{\alpha(y): y \in \des_T(v)\}$, 
    or $\hat{\alpha}(v)$ is the minimum value such that
    $\delta_{T,\hat{\alpha}}$ is nearest to $\delta$ in the $l^\infty$-norm,
    then we say that $\hat{u}$ is obtained from from $u$ by \emph{sliding $v$ all the way down}.
\end{defn}

\begin{ex}
    Let $\delta$ be the dissimilarity map shown on the left in Figure \ref{fig:multipleClosest}
    and consider the $l^\infty$-nearest ultrametrics $u_1,u_2,$ and $u_3$ shown in Figure \ref{fig:algTrees}.
    Note that $u_2$ is obtained from $u_1$ by sliding the node with weight $7$ all the way down,
    and $u_3$ is obtained from $u_1$ by sliding the node with weight $5$ all the way down.
\end{ex}



\begin{thm}\label{thm:treesAlgorithm}
    Let $\delta: X \times X \rightarrow \rr$ be a dissimilarity map.
    Let $S_0 = \{\delta_m\}$, and for each $i \ge 1$ define
    $S_i$ to be the set of ultrametrics obtained from some $u \in S_{i-1}$
    by sliding a mobile internal node of a resolution of the topology of $u$ all the way down.
    Then
    \begin{enumerate}
        \item $\bigcup_i S_i$ is a finite set, and
        \item\label{item:treesTropConvexHull} the tropical convex hull of $\bigcup_i S_i$ is the set of ultrametrics $l^\infty$-nearest to $\delta$, and
        \item\label{item:treesOneMobileNode} every vertex of this tropical polytope has at most one mobile internal node.
    \end{enumerate}
\end{thm}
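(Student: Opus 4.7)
The plan is to derive Theorem~\ref{thm:treesAlgorithm} as the specialization of Theorem~\ref{thm:verticesAlgorithm} to the graphic matroid of $K_n$, using the Ardila--Klivans identification~\cite{ardila-klivans2006} of the Bergman fan of this matroid with the space of ultrametrics on $n$ taxa. Under this dictionary, the Feichtner--Sturmfels nested-set refinement of Section~\ref{sec:nestedSets} restricts to the stratification of the ultrametric space by pairs (resolution of the topology, compatible weighting), a mobile internal node of a resolution corresponds to a mobile flat in a nested set, and sliding a node all the way down is the ultrametric translation of the cone-lowering move used in Theorem~\ref{thm:verticesAlgorithm}. Item~(1) then follows because each slide-all-the-way-down either drops $\alpha(v)$ to $\max\{\alpha(y) : y \in \des_T(v)\}$, collapsing an edge and strictly reducing the number of interior nodes, or drops $\alpha(v)$ to the minimum value preserving the $l^\infty$-distance to $\delta$, a threshold determined by finitely many entries of $\delta$; thus every reachable ultrametric lies in a finite poset of $(T, \alpha)$ pairs.

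For item~(2), Proposition~\ref{prop:ultrametricTropicalPolytope} tells us that the set $P$ of $l^\infty$-nearest ultrametrics is a tropical polytope, so by Section~\ref{sec:tropicalConvexity} it equals the tropical convex hull of its finitely many tropical vertices. Since $\delta_m$ is coordinatewise maximal on $P$ it is a vertex lying in $S_0$, and an easy induction shows $\bigcup_i S_i \subseteq P$. The remaining claim is that every vertex $u \in P$ distinct from $\delta_m$ is reachable by a finite chain of slides: starting from $\delta_m$, one selects a coordinate pair $(i,j)$ maximizing the gap $\delta_m - u$, identifies the interior node of the current resolution whose value controls that coordinate, and verifies that this node is mobile with slide-all-the-way-down terminating at or above $u_{ij}$. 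Iteration terminates by item~(1), and the resulting chain exhibits $u$ in $\bigcup_i S_i$.

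For item~(3), suppose a vertex $u \in P$ has two distinct mobile internal nodes $v_1, v_2$ in some resolution $T$ with weighting $\alpha$. For each $k = 1,2$ let $u^{(k)} \in P$ witness mobility of $v_k$ as in Definition~\ref{defn:sliding}. The coordinates $u^{(k)}_{ij}$ differ from $u_{ij}$ only on pairs whose MRCA in $T$ equals $v_k$, and these two sets of pairs are disjoint, so $u^{(1)} \oplus u^{(2)} = u$ coordinatewise; this is a nontrivial tropical convex combination, contradicting extremality. The main obstacle in the direct route is item~(2), specifically the careful argument that the node identified by the largest-gap pair is always mobile and that its slide-all-the-way-down lands at a feasible ultrametric rather than overshooting $u$; the nested-set formalism of Section~\ref{sec:nestedSets} is precisely what allows this bookkeeping to be carried out uniformly, so the cleanest proof route is to establish Theorem~\ref{thm:verticesAlgorithm} in the Bergman-fan setting and translate.
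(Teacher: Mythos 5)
Your proposal takes the same route as the paper: Theorem~\ref{thm:treesAlgorithm} is deduced as the special case of Theorem~\ref{thm:verticesAlgorithm} for $\calm(K_n)$, using the Ardila--Klivans identification of ultrametrics with $\calm(K_n)$-ultrametrics and the Feichtner--Sturmfels dictionary (Proposition~\ref{prop:generalizationWorks}) matching tree topologies, resolutions, and mobile nodes with nested sets, their resolutions, and mobile flats, with $\delta_m$ corresponding to $x^m$. Your auxiliary direct sketches of items (1)--(3) are rougher than the paper's proof of the general theorem (e.g.\ item (3) must also handle mobile flats coming from two \emph{different} resolutions of the same topology), but since you explicitly defer to proving the Bergman-fan statement and translating, this matches the paper's argument.
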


Theorem \ref{thm:treesAlgorithm} is a special case of Theorem \ref{thm:verticesAlgorithm},
which will be proven later.
We now illustrate Theorem \ref{thm:treesAlgorithm} on an example.

\begin{ex}\label{ex:algTrees}
    Let $\delta$ be the dissimilarity map given in Figure \ref{fig:multipleClosest} on the left.
    We will make reference to ultrametrics $u_1,\dots,u_5$ which are shown in Figure \ref{fig:algTrees}.
    Using Theorem \ref{thm:chepoi},
    we can see that $\delta_m = u_1$.
    Let $v_1$ be the internal node of $u_1$'s topology with weight $5$.
    Then $v_1$ is mobile and sliding it all the way down yields $u_3$.
    Let $v_2$ be the internal node of $u_1$'s topology with weight $7$.
    Then $v_2$ is mobile and sliding it all the way down yields $u_2$.
    The topology of $u_4$ is a resolution of the topology of $u_2$.
    Letting $v_3$ be the internal node of $u_4$'s topology with weight $1$,
    we can see that $u_4$ is obtained from $u_2$ by sliding $v_3$ all the way down.
    The topology of $u_5$ is also a resolution of the topology of $u_2$.
    Letting $v_4$ be the internal node of $u_5$'s topology with weight $-1$,
    we can see that $u_5$ is obtained from $u_2$ by sliding $v_4$ all the way down.
    Beyond $v_3$ and $v_4$, no internal nodes of any resolution of the topology of $u_2$ are mobile.
    The only mobile node of $u_3$ is the node labeled $7$;
    denote this $v_5$.
    Then sliding $v_5$ all the way down gives us $u_5$ once again.
    
    Using the notation of Theorem \ref{thm:treesAlgorithm},
    we have $S_0 = \{u_1\}$, $S_1 = \{u_2,u_3\}$ and $S_2 = \{u_4,u_5\}$.
    Note that no internal nodes of $u_4$ and $u_5$ are mobile.
    Hence $S_i$ is empty for all $i \ge 3$.
    Since $u_1$ and $u_2$ each have two mobile internal nodes,
    Theorem \ref{thm:treesAlgorithm} implies
    that the set of ultrametrics $l^\infty$-nearest to $\delta$
    is the tropical convex hull of $\{u_3,u_4,u_5\}$.
    This tropical polytope is contained in the three-dimensional affine subspace 
    $\{\tilde{\delta} \in \rr^{\binom{[4]}{2}}: \tilde{\delta}(1,4) = \tilde{\delta}(2,4) 
    = \tilde{\delta}(3,4) = 9\}\subset\rr^{\binom{[4]}{2}}$.
    Therefore, we can visualize it as in Figure \ref{fig:treeTropicalPolytope}.
    
    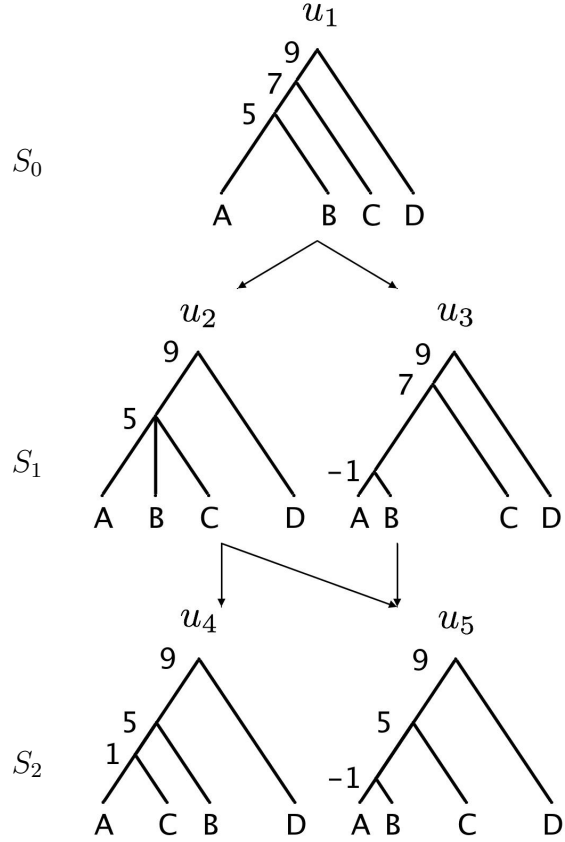
\begin{figure}[h]\centering
        \begin{subfigure}{0.38\textwidth}\centering
            \begin{tikzpicture}
                \node at (-.8,3/4){$u_1=$};
                \node (a) at (0,-0.2){$A$};
                \node (b) at (1,-0.2){$B$};
                \node (c) at (2,-0.2){$C$};
                \node (d) at (3,-0.2){$D$};
                \node (ab) at (1/2-0.2,1/2+0.2){$5$};
                \node (abc) at (1-0.2,1+0.2){$7$};
                \node (abcd) at (3/2-0.2,3/2+0.2){$9$};
                \draw (0,0) -- (1/2,1/2);
                \draw (1,0) -- (1/2,1/2);
                \draw (1/2,1/2) -- (1,1);
                \draw (2,0) -- (1,1);
                \draw (3,0) -- (3/2,3/2);
                \draw (1/2,1/2) -- (3/2,3/2);
            \end{tikzpicture}
        \end{subfigure}
        \begin{subfigure}{0.38\textwidth}\centering
            \begin{tikzpicture}
                \node at (-.8,3/4){$u_2=$};
                \node (a) at (0,-0.2){$A$};
                \node (b) at (1,-0.2){$B$};
                \node (c) at (2,-0.2){$C$};
                \node (d) at (3,-0.2){$D$};
                \node (abc) at (1-0.2,1+0.2){$5$};
                \node (abcd) at (3/2-0.2,3/2+0.2){$9$};
                \draw (0,0) -- (1,1);
                \draw (1,0) -- (1,1);
                \draw (2,0) -- (1,1);
                \draw (3,0) -- (3/2,3/2);
                \draw (1/2,1/2) -- (3/2,3/2);
            \end{tikzpicture}
        \end{subfigure}
        \ \\ \ \\
        \begin{subfigure}{0.32\textwidth}\centering
            \begin{tikzpicture}
                \node at (-.8,3/4){$u_3=$};
                \node (a) at (0,-0.2){$A$};
                \node (b) at (1,-0.2){$B$};
                \node (c) at (2,-0.2){$C$};
                \node (d) at (3,-0.2){$D$};
                \node (ab) at (1/2-0.2,1/2+0.2){$-1$};
                \node (abc) at (1-0.2,1+0.2){$7$};
                \node (abcd) at (3/2-0.2,3/2+0.2){$9$};
                \draw (0,0) -- (1/2,1/2);
                \draw (1,0) -- (1/2,1/2);
                \draw (1/2,1/2) -- (1,1);
                \draw (2,0) -- (1,1);
                \draw (3,0) -- (3/2,3/2);
                \draw (1/2,1/2) -- (3/2,3/2);
            \end{tikzpicture}
        \end{subfigure}
        \begin{subfigure}{0.32\textwidth}\centering
            \begin{tikzpicture}
                \node at (-.8,3/4){$u_4=$};
                \node (a) at (0,-0.2){$A$};
                \node (b) at (1,-0.2){$C$};
                \node (c) at (2,-0.2){$B$};
                \node (d) at (3,-0.2){$D$};
                \node (ab) at (1/2-0.2,1/2+0.2){$1$};
                \node (abc) at (1-0.2,1+0.2){$5$};
                \node (abcd) at (3/2-0.2,3/2+0.2){$9$};
                \draw (0,0) -- (1/2,1/2);
                \draw (1,0) -- (1/2,1/2);
                \draw (1/2,1/2) -- (1,1);
                \draw (2,0) -- (1,1);
                \draw (3,0) -- (3/2,3/2);
                \draw (1/2,1/2) -- (3/2,3/2);
            \end{tikzpicture}
        \end{subfigure}
        \begin{subfigure}{0.32\textwidth}\centering
            \begin{tikzpicture}
                \node at (-.8,3/4){$u_5=$};
                \node (a) at (0,-0.2){$A$};
                \node (b) at (1,-0.2){$B$};
                \node (c) at (2,-0.2){$C$};
                \node (d) at (3,-0.2){$D$};
                \node (ab) at (1/2-0.2,1/2+0.2){$-1$};
                \node (abc) at (1-0.2,1+0.2){$5$};
                \node (abcd) at (3/2-0.2,3/2+0.2){$9$};
                \draw (0,0) -- (1/2,1/2);
                \draw (1,0) -- (1/2,1/2);
                \draw (1/2,1/2) -- (1,1);
                \draw (2,0) -- (1,1);
                \draw (3,0) -- (3/2,3/2);
                \draw (1/2,1/2) -- (3/2,3/2);
            \end{tikzpicture}
        \end{subfigure}
        \color{black}
        \caption{By Theorem \ref{thm:treesAlgorithm},
        the tropical convex hull of the ultrametrics above
        is the set of ultrametrics $l^\infty$-nearest
        to the dissimilarity map given on
        the left side of Figure \ref{fig:multipleClosest}.}\label{fig:algTrees}
    \end{figure}
\end{ex}

\begin{figure}
    \includegraphics[scale=0.1]{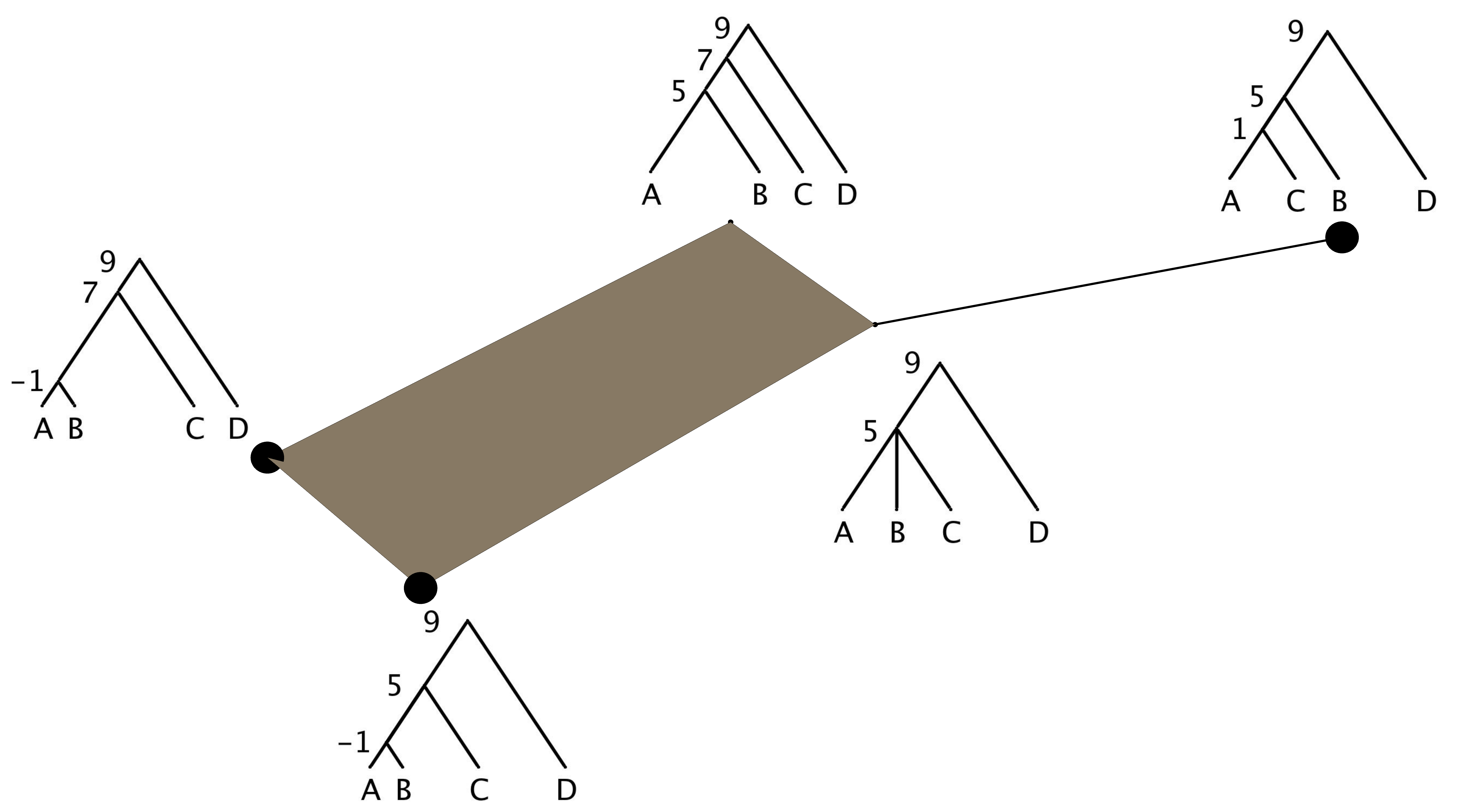}
    \caption{The tropical polytope consisting of ultrametrics
    that are $l^\infty$-nearest to $\delta$.
    The large points are the tropical vertices.}\label{fig:treeTropicalPolytope}
\end{figure}

{
Theorem \ref{thm:treesAlgorithm} implies that the elements of $\bigcup_i S_i$
that have at most one mobile internal vertex are a superset of the 
vertex set of the tropical polytope consisting of the ultrametrics $l^\infty$-nearest to a given dissimilarity map.
A recent preprint of Luyan Yu shows that this containment can be strict
for dissimilarity maps with at least four elements \cite{yu2019extreme}.
A complete characterization of the vertices of this tropical polytope is still open.

We now describe a polynomial-time checkable condition that is equivalent to the condition that all ultrametrics $l^\infty$-nearest
to a given dissimilarity map have the same topology.

\begin{thm}\label{thm:sameTopologyTest}
    All ultrametrics $l^\infty$-nearest to a given dissimilarity map $\delta$ on $n$ elements
    have the same topology if and only if the ultrametrics in $S_0 \cup S_1$ from Theorem \ref{thm:treesAlgorithm}
    all have the same topology.
    This condition can be checked in $O(n^2)$ time.
\end{thm}
\begin{proof}
    The second claim follows from the fact that Chepoi and Fichet's algorithm in Theorem \ref{thm:chepoi}
    runs in $O(n^2)$ time (c.f. \cite{chepoi}),
    and that $\delta_m$ has at most $n-1$ internal vertices.

    If we were to replace $S_0 \cup S_1$ with $\bigcup_i S_i$ in the statement of the theorem,
    then it would immediately follow from Proposition \ref{prop:multipletopologies}.
    So it suffices to show that if all trees in $S_0 \cup S_1$ have the same topology,
    then all trees in $\bigcup_i S_i$ do as well.

    Assume that all ultrametrics in $S_0 \cup S_1$ have the same topology $T$.
    For the sake of contradiction, let $i \ge 2$ be minimal such that there exists $u \in S_i$
    such that the topology of $u$ is not $T$.
    This means that there exists some $u' \in S_{i-1}$ with topology $T$
    such that an internal node $v$ of $T$ is mobile in $u'$
    and sliding $v$ all the way down in $u'$ yields $u$.
    Let $\alpha_m,\alpha,\alpha':T^\circ \rightarrow \rr$
    be internal edge weightings of $T$
    expressing $\delta_m,u,$ and $u'$ respectively
    (i.e., $\delta_m = \delta_{T,\alpha_m}$,
    $u = \delta_{T,\alpha}$, and $u' = \delta_{T,\alpha'}$).
    Since $u$ is obtained from $u'$ by sliding $v$ all the way down,
    $\alpha(y) = \alpha'(y)$ unless $y = v$, in which case
    $\alpha(v) = \max\{\alpha'(y): y \in \des_T(v)\}$.
    Since $\max\{\alpha_m(y): y \in \des_T(v)\} \ge \max\{\alpha'(y): y \in \des_T(v)\}$,
    $\delta_m$ and $u'$ both have topology $T$,
    and all internal nodes of $T$ that are mobile for $u'$ are also mobile for $\delta_m$,
    we can slide $v$ all the way down in $\delta_m$ to get an element of $S_1$
    with the topology of $u$, contradicting that all elements of $S_1$ have topology $T$.
\end{proof}
}

\section{Bergman fans and nested sets}\label{sec:nestedSets}
 The goal of this section is to generalize the notion of tree topology
for ultrametrics to elements of Bergman fans of arbitrary matroids.
Nested sets of matroids, as described in \cite{feichtner-sturmfels2005},
will play the role of rooted trees in this more general context.
Familiarity with matroid connectivity is assumed; for this we refer the reader to \cite[Chapter~4]{oxley2011}.
We begin by defining the Bergman fan of a matroid.
Equivalent cryptomorphic definitions exist.
The one we provide is due to Ardila (see \cite[Proposition 2]{ardila2004}).

\begin{defn}\label{defn:bergmanFan}
    Let $\calm$ be a matroid on ground set $E$.
    A vector $w \in \rr^E$ is said to be an \emph{$\calm$-ultrametric} if
    for each circuit $C$ of $\calm$,
    the cardinality of $\{x \in C: w_x = \max_{y\in C} w_y\}$
    is at least two.
    The set of $\calm$-ultrametrics, denoted $\berg(\calm)$,
    is called the \emph{Bergman fan of~$\calm$}.
\end{defn}

\color{black}
As the name suggests,
$\calm$-ultrametrics generalize the ultrametrics discussed
in Section~\ref{sec:phylogenetics}.
In particular, letting $K_n$ denote the complete graph on $n$ vertices
and $\calm(G)$ denote the matroid underlying a graph $G$,
the following theorem of Ardila and Klivans
tells us that ultrametrics are $\calm(K_n)$-ultrametrics.

\begin{thm}[\cite{ardila-klivans2006}, Theorem 3]
\label{thm:ardilaUltrametric}
    A dissimilarity map on the set $\{1,\dots,n\}$ is an ultrametric
    if and only if it is an $\calm(K_n)$-ultrametric.
\end{thm}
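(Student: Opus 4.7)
The plan is to translate the $\berg(\calm(K_n))$-membership condition --- that every edge of $K_n$ lies in some $w$-minimum spanning tree --- into a condition on triples of vertices, and show this condition is exactly the three-point characterization of ultrametrics. The bridge is the standard matroid-theoretic fact (a direct consequence of basis-exchange) that an element $e$ of a weighted matroid lies in some $w$-minimum basis if and only if $e$ is not the unique $w$-maximal element of any circuit containing $e$. Since the circuits of $\calm(K_n)$ are precisely the cycles of $K_n$, this reformulates the theorem as: $w \in \berg(\calm(K_n))$ iff for every edge $\{i,j\}$ of $K_n$ and every cycle through $\{i,j\}$, some other edge of the cycle has weight at least $w(i,j)$.

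With the reformulation in hand, the direction ``$\calm(K_n)$-ultrametric implies ultrametric'' is contrapositive and immediate. If the three-point condition fails at a distinct triple $i,j,k$ with $w(i,j) > \max\{w(i,k), w(j,k)\}$, then the triangle $ijk$ is itself a $3$-cycle on which $\{i,j\}$ is the unique $w$-maximum, so $\{i,j\}$ lies in no $w$-minimum spanning tree, contradicting the hypothesis.

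For the converse, I would fix an ultrametric $w$, an edge $\{i,j\}$, and a cycle through it with vertex sequence $i = v_0, v_1, \dots, v_r, v_{r+1} = j$, so that the $i$-to-$j$ path through the cycle avoiding $\{i,j\}$ has $r+1$ edges. Assuming for contradiction that every such path-edge $\{v_k, v_{k+1}\}$ has weight strictly less than $w(i,j)$, I would prove by induction on $k = 1, \dots, r$ that $w(j, v_k) = w(i,j)$. The base case applies the three-point condition to the triangle $\{i,j,v_1\}$ together with $w(i,v_1) < w(i,j)$; the inductive step applies it to the triangle $\{j, v_k, v_{k+1}\}$ using $w(j,v_k) = w(i,j) > w(v_k, v_{k+1})$. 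At $k = r$ this forces $w(v_r, j) = w(j, v_r) = w(i,j)$, contradicting the hypothesis that the final path-edge $\{v_r, j\}$ has weight $< w(i,j)$.

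The main obstacle is keeping the three-point-condition bookkeeping straight in the forward induction. The underlying matroid-theoretic circuit characterization is well-known and can be quoted; if a self-contained exposition is preferred, the classical cut-exchange argument specialized to spanning trees of $K_n$ (delete $\{i,j\}$ from any MST containing it, then swap it for a lighter edge across the resulting cut) can be written out directly in a few lines.
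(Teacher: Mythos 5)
Your argument is correct, but note that the paper does not prove this statement at all -- it is quoted from Ardila--Klivans \cite{ardila-klivans2006}, so there is no internal proof to compare against; what you have written is a self-contained substitute. Your route is the natural one: by the standard circuit criterion for minimum-weight bases, ``every edge lies in some $w$-minimum spanning tree'' is equivalent to ``in every cycle of $K_n$ the maximum weight is attained at least twice,'' and your triangle argument plus induction along the cycle shows this is equivalent to the three-point condition on distinct triples, which the paper asserts (citing Semple--Steel) is equivalent to its tree-based definition of ultrametric -- so leaning on that equivalence is legitimate here. Two small points to tighten if you write this out in full: (i) the quoted matroid fact is needed in both directions, and your sketched cut-exchange only gives ``$e$ in some MST $\Rightarrow$ $e$ is not the unique maximum of any cycle''; for the converse, either cite the fact as you propose or add the one-line exchange (if $e$ is in no MST, take an MST $T$, and since $e$ is not the unique maximum of its fundamental cycle in $T+e$, swap $e$ for an edge $f$ of that cycle with $w(f)\ge w(e)$ to get an MST containing $e$); (ii) in the induction, record that $v_1,\dots,v_r$ are distinct from $i$ and $j$, so every triple you feed to the three-point condition really consists of distinct elements. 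With those details filled in, the proof is complete.
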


We would like to generalize Theorem \ref{thm:treesAlgorithm},
i.e.~describe a generating set of the tropical polytope consisting of the $\calm$-ultrametrics
that are $l^\infty$-nearest to a given $x \in \rr^E$.
To do this, we need to generalize the notion of tree topology for arbitrary $\calm$-ultrametrics.
Definition \ref{defn:nestedSets} below provides the desired generalization.
It is essentially the special case of Definition 3.2 in \cite{feichtner-sturmfels2005} 
where the required lattice is the lattice of flats of a connected matroid $\calm$
and the required building set is the set of connected flats of $\calm$.

\begin{defn}\label{defn:nestedSets}
    Given a connected matroid $\calm$ on ground set $E$,
    a \emph{nested set of $\calm$} is a set
    $\cals$ of connected nonempty flats of $\calm$
    such that $E \in \cals$ and whenever $F_1,\dots,F_k \in \cals$ are pairwise incomparable with respect to the containment order,
    the closure of $F_1 \cup \dots \cup F_k$ is disconnected.
    If $\calm$ is disconnected with connected components $\calm_1,\dots,\calm_k$,
    then a nested set of $\calm$ is the union of nested sets $\cals_1,\dots,\cals_k$
    of $\calm_1,\dots,\calm_k$.
\end{defn}

\begin{ex}\label{ex:nestedSets}
    Let $\calm$ be the uniform matroid of rank three on ground set $\{a,b,c,d\}$.
    The nested sets of $\calm$ are the sets of any of the following forms
    \begin{align*}
        \{\{a,b,c,d\}\} \quad \{\{x\},\{a,b,c,d\}\} \quad \{\{x\},\{y\},\{a,b,c,d\}\}
    \end{align*}
    where $x,y \in \{a,b,c,d\}$.
    If $\caln$ is the uniform matroid of rank two on ground set $\{e,f,g\}$,
    then the nested sets of the direct sum $\calm \oplus \caln$
    are sets of the form
    \[
        S\cup \{\{e,f,g\}\} \quad S \cup \{\{x\},\{e,f,g\}\}
    \]
    where $S$ is a nested set of $\calm$ and $x \in \{e,f,g\}$.
\end{ex}
\color{black}

We remind the reader that ${\bf 1}$ denotes the all-ones vector.

\begin{defn}\label{defn:nestedSetComplex}
	Let $\calm$ be a connected matroid on ground set $E$
	and let $\cals$ be a nested set of $\calm$.
    For each $F \in \cals$, let $v_F \in \rr^E$
    denote $-1$ times the characteristic vector of $F$.
    Define $K_\cals$ to be the cone spanned by the $v_F$
    and $\pm {\bf 1}$.
    The \emph{nested set fan of $\calm$}, denoted $\nest(\calm)$ is the polyhedral fan
    consisting of all the polyhedral cones $K_\cals$ as $\cals$
    ranges over all nested sets of $\calm$.
    When $\calm$ is disconnected, we define its nested set fan 
    to be the cartesian product of the nested set fans of its connected components.
\end{defn}

Note that $\nest(\calm)$ is indeed a polyhedral fan
since $K_\cals$ is simplicial, and $K_{\cals} \cap K_{\cals'} = K_{\cals \cap \cals'}$.
Also note that the lineality space of $\nest(\calm)$ is spanned by
the characteristic vectors of the connected components of $\calm$.

Definition \ref{defn:nestedSets} is slightly more restrictive than Definition 3.2 of \cite{feichtner-sturmfels2005}.
Namely, a nested set in the sense of \cite{feichtner-sturmfels2005} does not require
that each connected component of a matroid be present,
nor that the entire ground set of a disconnected matroid not be present.
{ For example, using $\calm$ and $\caln$ as in Example \ref{ex:nestedSets},
Definition 3.2 of \cite{feichtner-sturmfels2005}
would allow us to remove $\{a,b,c,d\}$ from any nested set of $\calm$,
or add $\{a,b,c,d,e,f,g\}$ to any nested set of $\calm \oplus \caln$.}
However, this is not an issue because these differences in definitions do not affect the nested set fan.
{ Under the less restrictive definition,
if $E \in S$ for some nested set $S$,
then $K_S = K_{S \setminus\{E\}}$.
We use this more restrictive definition to avoid this ambiguity when indexing cones of $\nest(\calm)$.}

\begin{prop}\label{prop:nestedRefinesBergman}
    The nested set fan $\nest(\calm)$ is a refinement of the Bergman fan $\berg(\calm)$.
\end{prop}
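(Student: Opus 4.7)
The plan is to prove the proposition in two main steps: first establish set equality $|\nest(\calm)| = \berg(\calm)$, then observe that each cone $K_\cals$ is contained in a single cone of the natural fan structure on $\berg(\calm)$, where two weight vectors share a cone iff they have the same collection of $w$-minimum bases. Both $\nest$ and $\berg$ factor as direct products over the connected components of $\calm$ --- the first by Definition \ref{defn:nestedSetComplex} and the second because any basis is a disjoint union of bases of connected components --- so I would first reduce to the case that $\calm$ is connected.

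For the forward inclusion $|\nest(\calm)| \subseteq \berg(\calm)$, I would take $w = c_0 \mathbf{1} - \sum_{F \in \cals} c_F \chi_F \in K_\cals$ with $c_F \geq 0$ and show every $e \in E$ lies in some $w$-minimum basis. Rewriting the weight of a basis $B$ as $c_0 |B| - \sum_F c_F |B \cap F|$ shows the $w$-minimum bases are exactly those $B$ with $|B \cap F| = \rank(F)$ for every $F \in \cals$. The key combinatorial input is that if $F_1, \dots, F_k$ are the maximal members of $\cals$ strictly below some $F \in \cals$, then $\cl(F_1 \cup \dots \cup F_k)$ is disconnected by the nested condition, which produces a matroid direct-sum decomposition forcing $\rank(F_1 \cup \dots \cup F_k) = \sum_i \rank(F_i)$; this lets bases of the $F_i$ glue into a partial basis of $F$ that extends to a full basis including any prescribed element. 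Working recursively through the containment forest of $\cals$ produces such an adapted basis.

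For the reverse inclusion, given $w \in \berg(\calm)$ with distinct values $\lambda_1 < \cdots < \lambda_s$, I would use the standard fact that the sublevel sets $G_k := \{e : w_e \leq \lambda_k\}$ are all flats of $\calm$ (any $e \in \cl(G_k) \setminus G_k$ would be a loop of the initial matroid $\calm_w$ via a matroid exchange argument). Let $\cals$ consist of the connected components of each $G_k$, viewed as connected flats. The decomposition $w = \lambda_s \mathbf{1} - \sum_k (\lambda_k - \lambda_{k-1}) \chi_{G_{k-1}}$, with each $\chi_{G_k}$ further split as a sum over its connected components, places $w$ in $K_\cals$. Nestedness holds because any antichain in $\cals$ lies in a smallest $G_k$ containing all its elements, and then those elements are distinct connected components of $G_k$; since $G_k$ equals its own closure and decomposes as a matroid direct sum of its connected components, the closure of the antichain's union is disconnected as required.

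The main obstacle I anticipate is the recursive adapted-basis construction in the forward direction: it hinges on converting disconnectedness of $\cl(F_1 \cup \dots \cup F_k)$ into a genuine direct-sum decomposition with additive ranks, which is the central combinatorial content of the refinement result in \cite{feichtner-sturmfels2005}. Once set equality is established, the refinement property is essentially formal: the combinatorial type of the $w$-minimum bases depends only on the supports $\cals$ and not on the specific coefficients $c_F$, so the relative interior of $K_\cals$ lies inside a single cone of $\berg(\calm)$.
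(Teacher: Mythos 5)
The paper disposes of this proposition in one line: for connected $\calm$ it simply cites Theorem 4.1 of \cite{feichtner-sturmfels2005}, and for disconnected $\calm$ it observes that $\berg(\calm)$ is the product of the Bergman fans of the connected components. Your proposal instead tries to reconstruct the content of that citation: the same reduction to the connected case, then set equality via two inclusions, with the cone of the Bergman fan containing $K_\cals$ identified through the description of the $w$-minimum bases as those $B$ with $|B\cap F|=\rank(F)$ for all $F\in\cals$. That is a genuinely more hands-on route and it buys an explicit combinatorial picture of each cone; but note that the step you flag as the main obstacle is exactly what the paper outsources to Feichtner--Sturmfels, so to the extent you would cite \cite{feichtner-sturmfels2005} there, your argument is a partial unwinding of the paper's proof rather than an independent one.

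Two soft spots deserve naming. First, in the forward inclusion, disconnectedness of $\cl(F_1\cup\dots\cup F_k)$ does \emph{not} by itself force $\rank(F_1\cup\dots\cup F_k)=\sum_i\rank(F_i)$: a priori two of the $F_i$ could sit inside the same connected component of the closure. What one needs is that the $F_i$ are precisely the connected components of $\cl(\bigcup_i F_i)$; this follows by applying nestedness to the sub-antichain lying in a single component $K$ (its join would be all of $K$, hence connected, a contradiction), together with the observation that a union of flats sitting in distinct components of a flat is again a flat. Only then do bases glue, and one must still run the recursion carefully so that a single basis saturates \emph{every} $F\in\cals$ simultaneously while containing a prescribed element $e$. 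Second, in the reverse inclusion, the claim that the members of an antichain are distinct connected components of the smallest sublevel flat $G_k$ containing them is false in general: for the ultrametric on six leaves with cherries $\{1,2\},\{3,4\}$ at height $1$, the node $\{1,2,3,4\}$ and cherry $\{5,6\}$ at height $2$, and root at height $3$, the antichain of flats corresponding to $\{1,2\},\{3,4\},\{5,6\}$ has smallest sublevel flat at threshold $2$, and the first two lie in the same component of it. The conclusion survives with a small repair: not all antichain members can lie in a single component of $G_k$ (any member realized as a component of $G_k$ would then contain the others), and since no circuit inside the flat $G_k$ meets two of its components, the closure of the union is disconnected. With these repairs your outline is sound; as written, the rank-additivity assertion and the nestedness argument are the places a referee would push back.
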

\begin{proof}
    When $\calm$ is connected, this follows from Theorem 4.1 in \cite{feichtner-sturmfels2005}.
    The rest of the proposition follows by noting that the Bergman fan of a disconnected matroid
    is the cartesian product of the Bergman fans of its connected components.
\end{proof}

{ In light of Proposition \ref{prop:nestedRefinesBergman},
we can make the following definition.}

\begin{defn}\label{defn:bergmanFanTopology}
    Let $w$ be an $\calm$-ultrametric.
    { Let $\calt(w)$ denote the unique nested set of of $\calm$
    such that $w$ lies in the relative interior of $K_{\calt(w)}$.
    We call $\calt(w)$ the \emph{topology of $w$}.}
\end{defn}

Definition \ref{defn:bergmanFanTopology} might be unsettling to some readers
since it appears to have nothing to do with topology in the usual sense.
We use it because, as Proposition \ref{prop:generalizationWorks} below shows,
it generalizes the notion of tree topology of an ultrametric
in the phylogenetics sense.

\begin{prop}[{\cite[Remark 5.4]{feichtner-sturmfels2005}}]\label{prop:generalizationWorks}
    Let $w,u$ be $\calm(K_n)$-ultrametrics.
    Then the tree topologies of $w,u$ are equal if and only if $\calt(w) = \calt(u)$.
\end{prop}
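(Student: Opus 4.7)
The plan is to identify both the phylogenetic tree topology of an ultrametric on $X = \{x_1,\dots,x_n\}$ and the nested-set topology $\calt(\cdot)$ with the same combinatorial object --- a laminar family of $\ge 2$-element subsets of $X$ containing $X$ --- so that Proposition~\ref{prop:generalizationWorks} becomes tautological. First I would unpack the flat structure of $\calm(K_n)$: a flat of a cycle matroid is determined by the partition of the vertex set into connected components it induces, with the flat being the union of the edge sets of the cliques on each block; so each flat of $\calm(K_n)$ decomposes as a direct sum $\bigoplus_i \calm(K_{V_i})$ over the partition blocks, and is a connected nonempty flat exactly when the partition has one non-singleton block $S$ with $|S| \ge 2$. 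Two such subsets $S_1,S_2$ have disconnected union-closure iff $S_1 \cap S_2 = \emptyset$, so nested sets of $\calm(K_n)$ are precisely the laminar families of $\ge 2$-element subsets of $X$ containing $X$ itself. Under the standard bijection $T \mapsto \cals(T) := \{\des_T(u) \cap X : u \in T^\circ\}$, tree topologies on $X$ correspond to exactly these laminar families.

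The main technical step is to show that $\calt(w) = \cals(T)$ whenever $T$ is the tree topology of $w$. Let $\alpha: T^\circ \to \rr$ be the unique strict weighting with $\delta_{T,\alpha} = w$ and set $\lambda_u := \alpha(\textnormal{parent}(u)) - \alpha(u)$ for each non-root $u \in T^\circ$; strictness of the topology forces $\lambda_u > 0$. Identifying each connected flat of $\calm(K_n)$ with its underlying subset $S \subseteq X$ and writing $v_S$ for its generator $-\mathbf{1}_{\binom{S}{2}}$, a telescoping computation gives
\[
    w(x_i, x_j) = \alpha(\root(T)) - \sum_{\substack{u \in T^\circ \setminus \{\root(T)\} \\ \{x_i,x_j\} \subseteq \des_T(u)}} \lambda_u,
\]
since $u$ lies on the root-to-MRCA path for $x_i,x_j$ iff $\{x_i,x_j\} \subseteq \des_T(u) \cap X$. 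In vector form this reads $w = \alpha(\root(T)) \cdot \mathbf{1} + \sum_{u \neq \root(T)} \lambda_u \cdot v_{\des_T(u) \cap X}$. Because $\calm(K_n)$ is connected, the lineality of $\nest(\calm(K_n))$ is spanned by $\mathbf{1} = -v_X$, so the non-lineality generators of $K_{\cals(T)}$ are exactly $\{v_S : S \in \cals(T),\, S \neq X\}$; strict positivity of all $\lambda_u$ places $w$ in $\relint(K_{\cals(T)})$, and uniqueness of the open cone of $\nest(\calm(K_n))$ containing $w$ (ensured by Proposition~\ref{prop:nestedRefinesBergman}) gives $\calt(w) = \cals(T)$.

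The proposition follows at once: the tree topologies of $w,u$ agree iff $\cals(T_w) = \cals(T_u)$ iff $\calt(w) = \calt(u)$. The main obstacle I anticipate is the bookkeeping in the relative-interior step --- in particular, verifying that $v_X$ is the only generator lying in the lineality space, so that strict positivity of the coefficients on the proper-subset generators exactly characterizes the open cone $\relint(K_{\cals(T)})$ rather than a proper face of it.
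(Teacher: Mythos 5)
Your proof is correct, but it takes a different route from the paper: the paper disposes of this proposition by citing Remark~5.4 of Feichtner--Sturmfels \cite{feichtner-sturmfels2005}, which records exactly the dictionary you reconstruct, whereas you verify it from scratch. Concretely, you make explicit that connected nonempty flats of $\calm(K_n)$ are the edge sets $\binom{S}{2}$ for $S \subseteq X$ with $|S|\ge 2$, that the nested-set condition collapses to laminarity (pairwise incomparable members of a laminar family are disjoint, so the closure of their union is a direct sum; a crossing pair already violates the condition), and then you place $\delta_{T,\alpha}$ in the relative interior of $K_{\cals(T)}$ via the telescoping identity $w = \alpha(\root(T))\mathbf{1} + \sum_{u\neq \root(T)}\lambda_u v_{\des_T(u)\cap X}$ with $\lambda_u>0$ --- which is precisely the computation the paper carries out in general in Proposition~\ref{prop:displayOnTopology}, specialized to $K_n$. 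What the citation buys is brevity; what your argument buys is a self-contained proof and a transparent identification of $\calt(w)$ with the laminar family of clades of the tree topology, and your flagged obstacle (that $v_X$ is the only generator in the lineality space, so strict positivity of the coefficients on the proper-subset generators characterizes $\relint K_{\cals(T)}$) does check out: the $v_F$ for $F$ in a nested set are linearly independent (as used in Proposition~\ref{prop:displayOnTopology}), and any pair of leaves lying in distinct maximal proper members of the laminar family gives a coordinate that vanishes on every $v_S$ with $S\subsetneq X$, so no nontrivial nonnegative combination of those generators lies on the line $\rr\mathbf{1}$. The only point you gloss is that the trees arising as topologies of ultrametrics have every internal vertex with at least two children (vertices with a single child are never most recent common ancestors, hence not recoverable), which is exactly what makes the map $T \mapsto \{\des_T(u)\cap X : u \in T^\circ\}$ injective onto laminar families containing $X$; this is implicit in the paper's definition of topology via strict weightings and is worth a sentence, but it is not a gap in substance.
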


The following proposition tells us that topology of $\calm$-ultrametrics is
well-behaved with respect to tropical convexity.

\begin{prop}\label{prop:topologyconvex}
    The set of $\calm$-ultrametrics that have a particular topology $\cals$ is tropically convex.
\end{prop}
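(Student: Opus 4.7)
The plan is to describe the set $\{w : \calt(w) = \cals\}$, which by Definition~\ref{defn:bergmanFanTopology} and Proposition~\ref{prop:nestedRefinesBergman} coincides with $\relint(K_\cals)$, via a system of coordinate-wise equalities and strict inequalities on $w$, and then observe that these relations are preserved by tropical linear combinations.

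For each $e \in E$ set $\cals_e := \{F \in \cals : e \in F\}$. The first step is to establish the following coordinate description: $w \in \relint(K_\cals)$ if and only if (i) $w_e = w_f$ whenever $\cals_e = \cals_f$, and (ii) $w_e < w_f$ whenever $\cals_e \supsetneq \cals_f$. This comes from writing a generic element of $K_\cals$ as
\[
w = c{\bf 1} - \sum_{F \in \cals} c_F \chi_F
\]
with $c_F \ge 0$, computing $w_e = c - \sum_{F \in \cals_e} c_F$, and recalling that the lineality space of $K_\cals$ is spanned by the characteristic vectors of the connected components of $\calm$; so the relative interior forces $c_F > 0$ exactly on the non-lineality rays. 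Some care is required in the disconnected case (to see that between-component coordinates are unconstrained, since $\cals_e$ and $\cals_f$ are incomparable when $e,f$ lie in different components), but no new ideas are needed.

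Given this description, tropical convexity is immediate. Take $w, w' \in \relint(K_\cals)$ and $\lambda, \mu \in \rmax$ with $\lambda \oplus \mu = 0$, and set $v := \lambda \odot w \oplus \mu \odot w'$, so $v_e = \max(\lambda + w_e,\, \mu + w'_e)$. Condition (i) transfers at once: if $\cals_e = \cals_f$ then $w_e = w_f$ and $w'_e = w'_f$, so $v_e = v_f$. For (ii), $\cals_e \supsetneq \cals_f$ gives $w_e < w_f$ and $w'_e < w'_f$, hence $\lambda + w_e < \lambda + w_f$ and $\mu + w'_e < \mu + w'_f$; the elementary fact that $\max(a,b) < \max(c,d)$ whenever $a < c$ and $b < d$ then yields $v_e < v_f$. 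Thus $v$ satisfies both (i) and (ii), so $v \in \relint(K_\cals)$.

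The main obstacle is Step~1, the coordinate description of $\relint(K_\cals)$, since one has to account for the lineality carefully (especially in the disconnected setting). Once that is in hand, the tropical argument reduces to monotonicity of $\max$ under constant shifts, and no further matroid-theoretic input is needed.
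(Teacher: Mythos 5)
Your proof is correct, and it takes a somewhat different route from the paper's. The paper also reduces to showing that $\relint(K_\cals)$ is closed under tropical combinations, but it argues directly on the coefficients of the generators: tropical scalar multiplication is absorbed by the lineality space, and for tropical sums it writes $u,w$ (modulo lineality) as positive combinations $\sum_F \lambda_F^u v_F$, $\sum_F \lambda_F^w v_F$ and asserts that $u \oplus w = \sum_F \min\{\lambda_F^u,\lambda_F^w\} v_F$, concluding that the coefficients stay positive. Your argument instead characterizes $\relint(K_\cals)$ by coordinatewise relations ($w_e = w_f$ when $\cals_e = \cals_f$, $w_e < w_f$ when $\cals_e \supsetneq \cals_f$) and then observes that both relations are preserved by $\max$ after constant shifts; this treats scalar multiplication and addition in one stroke and avoids any bookkeeping with the $v_F$-coefficients of a tropical sum (note that $\max$ of two sums of the $v_F$'s is \emph{not} in general the sum of the coordinatewise minima of the coefficients, so your coordinatewise description is arguably the more robust way to phrase the same underlying fact). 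The one detail you should nail down in Step~1 is the ``if'' direction of your characterization: to recover strictly positive coefficients on every non-lineality ray you need that each $F \in \cals$ is the minimal flat of $\cals$ containing some $e \in E$, i.e.\ $F \not\subseteq \bigcup\{G \in \cals : G \subsetneq F\}$. This follows from Definition~\ref{defn:nestedSets}: if $F$ were covered by its maximal proper subflats $G_1,\dots,G_k$ in $\cals$ (necessarily $k \ge 2$), then $\cl(G_1 \cup \dots \cup G_k) = F$ would be connected, contradicting nestedness. With that observation, conditions (i) and (ii) are equivalent to saying $w = w^{\cals,\alpha}$ for a strictly compatible $\alpha$, which by the argument of Proposition~\ref{prop:displayOnTopology} is exactly membership in $\relint(K_\cals)$, and the rest of your proof goes through as written (the degenerate case $\mu = -\infty$, where $v = w$, is immediate).
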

\begin{proof}
	The lineality space of $\nest(\calm)$ contains ${\bf 1}$
	so the topology of an $\calm$-ultrametric is preserved under tropical scalar multiplication.
    We now show that topology is preserved under tropical sums.
    To this end, let $u,w$ be $\calm$-ultrametrics that lie in the relative interior of
    the same cone $K_\cals$.
    Modulo the lineality space of $\nest(\calm)$,
    $u = \sum_{F \in \cals} \lambda_F^u v_F$ and $w = \sum_{F \in \cals} \lambda_F^w v_F$
    where the sums are taken over the flats in $\cals$ that are not connected components of $\calm$,
    and $\lambda_F^u,\lambda_F^w$ are all strictly positive.
    Then $(u \oplus w) = \sum_F (\max\{\lambda_F^u, \lambda_F^w\}) v_F$.
    So $(u \oplus w)$ also lies in the relative interior of $K_\cals$
    and so $\calt(u \oplus w) = \cals$.
\end{proof}

Lemma \ref{lem:hierarchy} below implies that the Hasse diagram of the containment partial ordering
on a nested set of a matroid $\calm$ is a forest with a tree for each connected component of $\calm$.
Proposition \ref{prop:displayOnTopology} implies that each $\calm$-ultrametric can be displayed on
this forest in the same way that an ultrametric can be displayed on its tree topology.

\begin{lemma}\label{lem:hierarchy}
    Let $\cals$ be a nested set of a matroid $\calm$.
    Then for any pair $F,G \in \cals$,
    $F\subseteq G$ or $G\subseteq F$ or $G \cap F = \emptyset$.
\end{lemma}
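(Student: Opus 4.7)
The plan is to prove the lemma by contradiction using the defining property of a nested set (with $k=2$) together with the circuit-characterization of matroid connectivity.

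First I would handle the trivial case where $F$ and $G$ come from different connected components of $\calm$. Since $\cals$ is by definition a disjoint union of nested sets on the connected components, each flat in $\cals$ is connected and therefore lies entirely within a single connected component of $\calm$ (connected submatroids cannot straddle the equivalence classes determined by the circuit relation $\sim$). Hence in this case $F\cap G = \emptyset$ automatically, and the conclusion holds.

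So I may assume $F$ and $G$ belong to a nested set of a connected component, and I suppose for contradiction that they are incomparable with $F\cap G \neq \emptyset$. By the nested set property applied to the incomparable pair $F,G$, the closure $\cl(F\cup G)$ must be disconnected as a matroid. I will derive a contradiction by showing $\calm|\cl(F\cup G)$ is actually connected. Pick $e \in F\cap G$ and any $x\in F\cup G$; since $\calm|F$ and $\calm|G$ are each connected, the alternate characterization from Oxley Chapter 4.1 gives a chain of circuits of $\calm$, each entirely contained in $F$ or in $G$, linking $x$ to $e$. All these circuits lie in $F\cup G \subseteq \cl(F\cup G)$, and circuits of $\calm$ contained in $\cl(F\cup G)$ are circuits of $\calm|\cl(F\cup G)$. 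Hence every element of $F\cup G$ is $\sim$-equivalent to $e$ inside $\calm|\cl(F\cup G)$.

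It remains to push this equivalence to the rest of $\cl(F\cup G)$. If $y \in \cl(F\cup G) \setminus (F\cup G)$, then by definition of closure $y$ lies in a circuit $C$ of $\calm$ with $C\setminus\{y\} \subseteq F\cup G$; this circuit lies in $\cl(F\cup G)$, so $y$ is $\sim$-equivalent to some element of $F\cup G$, hence to $e$. Thus the ground set of $\calm|\cl(F\cup G)$ is a single $\sim$-class, so the restriction is connected, contradicting the nested set axiom and completing the proof.

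The one step requiring a little care — and the main place one could slip — is verifying that the circuits witnessing connectivity of $\calm|F$ really are circuits of $\calm$ (this uses that $F$ is a flat, so $\calm|F$ and $\calm.F$ agree on circuits contained in $F$), and that closure elements can be absorbed without breaking connectivity; both of these are standard but worth stating explicitly.
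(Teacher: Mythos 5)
Your proof is correct and follows essentially the same route as the paper: both show that when $F\cap G\neq\emptyset$ the closure of $F\cup G$ is connected via the circuit-equivalence relation ($F$ and $G$ each lie in one class, the nonempty intersection merges them, and closure elements are absorbed by circuits meeting $F\cup G$), which contradicts the nested set condition for incomparable flats. The only cosmetic difference is your contradiction framing and the parenthetical worry about $F$ being a flat, which is not actually needed since circuits of $\calm|F$ are just circuits of $\calm$ contained in $F$.
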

\begin{proof}
    Assume $F$ and $G$ are connected flats of $\calm$
    and that $F \cap G \neq \emptyset$.
    We will show that the closure $K$ of $F \cup G$ is connected.
    It will then follow from the definition of a nested set that either $F \subseteq G$
    or $G \subseteq F$.
    Let $\sim$ be the relation on $K$ where $a\sim b$
    if and only if there exists a circuit $C \subseteq K$
    containing both $a$ and $b$.
    It suffices to show that there is only one equivalence class of $K$ under $\sim$ \cite[Chapter~4.1]{oxley2011}.
    Both $F$ and $G$ are connected,
    so each must lie entirely within one equivalence class.
    Moreover, their intersection is nontrivial so $F \cup G$ lies
    in a single equivalence class.
    Since $K$ is the closure of $F \cup G$,
    each $e \in K\setminus(F \cup G)$ must also lie in this equivalence class.
\end{proof}

Note that Lemma \ref{lem:hierarchy} implies that if $\cals$ is a nested set of a matroid $\calm$,
then for each $e$ in the ground set of $\calm$,
there is a unique minimal flat in $\cals$ that contains $e$.

\begin{defn}\label{defn:multrametricFromTopology}
	Let $\calm$ be a matroid on ground set $E$ and let $\cals$ be a nested set of $\calm$.
	A function $\alpha: \cals \rightarrow \rr$ is said to be \emph{compatible with $\cals$} 
    if $F \subseteq G$ implies $\alpha(F) \le \alpha(G)$ for all $F,G \in \cals$.
    For $\alpha$ compatible with $\cals$,
	define $w^{\cals,\alpha} \in \rr^E$ by $w^{\cals,\alpha}_e = \alpha(F)$
	where $F$ is the minimal flat in $\cals$ that contains $e$.
	If $w = w^{\cals,\alpha}$,
	then we call the pair $(\cals,\alpha)$
	a \emph{nested set representation of $w$ on $\cals$}.
\end{defn}

\begin{prop}\label{prop:displayOnTopology}
    Let $\calm$ and $\cals$ be as in Definition \ref{defn:multrametricFromTopology} and let $\alpha: \cals \rightarrow \rr$
    be compatible with $\cals$.
    Then $w^{\cals,\alpha}$ is an $\calm$-ultrametric.
	Every $\calm$-ultrametric $w$
	has a unique nested set representation $w = w^{\calt(w),\alpha}$ on its topology.
\end{prop}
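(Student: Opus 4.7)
The plan is to treat the two assertions separately: (i) that $w^{\cals,\alpha}$ always lies in $\berg(\calm)$, and (ii) that every $\calm$-ultrametric $w$ admits a unique representation on $\calt(w)$. Both statements decompose over the connected components of $\calm$, since nested sets, the cones $K_\cals$, and the notion of $\calm$-ultrametric all factor as products over components. I will therefore assume $\calm$ is connected, in which case $E \in \cals$ and the lineality space of $\nest(\calm)$ is $\rr \cdot \mathbf{1}$.

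For (i), the first step is to rewrite $w^{\cals,\alpha}$ as a nonnegative combination of the generators of $K_\cals$. By Lemma~\ref{lem:hierarchy}, every $F \in \cals \setminus \{E\}$ has a unique minimal strict containing flat $p(F) \in \cals$, and the elements of $\cals$ containing a fixed $e \in E$ form a chain
\[
    F_e = F_0 \subsetneq F_1 \subsetneq \cdots \subsetneq F_\ell = E.
\]
Setting $\lambda_F := \alpha(p(F)) - \alpha(F) \ge 0$ (nonnegativity being exactly compatibility), telescoping yields
\[
    \alpha(E) - \sum_{F \in \cals \setminus \{E\},\, e \in F} \lambda_F \;=\; \alpha(F_e) \;=\; w^{\cals,\alpha}_e,
\]
so $w^{\cals,\alpha} = \alpha(E) \cdot \mathbf{1} + \sum_{F \ne E} \lambda_F v_F \in K_\cals$, and Proposition~\ref{prop:nestedRefinesBergman} places $w^{\cals,\alpha}$ in $\berg(\calm)$.

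For (ii), I would first produce $\alpha$ and then verify its uniqueness. Since $w$ lies in the relative interior of $K_{\calt(w)}$ by Definition~\ref{defn:bergmanFanTopology}, write $w = c \,\mathbf{1} + \sum_{F \in \calt(w) \setminus \{E\}} \lambda_F v_F$ with $\lambda_F > 0$, and define $\alpha(F) := c - \sum_{G \in \calt(w) \setminus \{E\},\, G \supseteq F} \lambda_G$. The same telescoping computation shows $w = w^{\calt(w),\alpha}$, and $\alpha$ is compatible because enlarging $F$ shrinks a sum of nonnegative terms. For uniqueness, the key step is to show that every $F \in \calt(w)$ contains some $e$ with $F_e = F$, which forces $\alpha(F) = w_e$. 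Let $G_1,\dots,G_k$ be the maximal proper sub-flats of $F$ in $\calt(w)$; these are pairwise incomparable, so when $k \ge 2$ the nested-set axiom forces $\cl(G_1 \cup \cdots \cup G_k)$ to be disconnected and hence unequal to the connected flat $F$, while when $k \le 1$ the strict inclusion $G_1 \cup \cdots \cup G_k \subsetneq F$ is automatic. In every case there is some $e \in F \setminus \bigcup_i G_i$, for which $F_e = F$.

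The main obstacle is the uniqueness step, since it is the only place where the nonobvious disconnectedness condition in Definition~\ref{defn:nestedSets} is actually invoked; the rest of the argument is the formal change of variables between a height function on the forest of flats (via Lemma~\ref{lem:hierarchy}) and nonnegative coordinates in the generators of $K_\cals$.
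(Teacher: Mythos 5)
Your proof is correct, and for the existence statements it follows the same route as the paper: reduce to connected $\calm$, convert a compatible $\alpha$ into nonnegative coefficients on the generators $v_F$ by the telescoping change of variables (and invert that change of variables on the relative-interior representation coming from Definition \ref{defn:bergmanFanTopology}), then invoke Proposition \ref{prop:nestedRefinesBergman}. Where you genuinely diverge is uniqueness. The paper argues that the map between the $\lambda_F$'s and the $\alpha(F)$'s is invertible and that $\{v_F : F \in \calt(w)\}$ is linearly independent, but it does not prove the independence claim; note that it is not a consequence of laminarity alone (for the laminar family $\{1,2\},\{1\},\{2\}$ the characteristic vectors are dependent), so the disconnectedness axiom of Definition \ref{defn:nestedSets} must enter somewhere. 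Your argument supplies exactly the missing combinatorial ingredient: for each $F \in \calt(w)$, the maximal proper subflats $G_1,\dots,G_k$ of $F$ in $\calt(w)$ are pairwise incomparable, so (for $k\ge 2$) $\cl(G_1\cup\dots\cup G_k)$ is disconnected, forcing $G_1\cup\dots\cup G_k \subsetneq F$; a ``private'' element $e \in F\setminus\bigcup_i G_i$ then has $F$ as its minimal covering flat (using Lemma \ref{lem:hierarchy} to see any $G\in\calt(w)$ containing $e$ and properly contained in $F$ would sit inside some $G_i$), so any compatible $\alpha$ representing $w$ must satisfy $\alpha(F)=w_e$. This pins down $\alpha$ directly, bypassing linear independence altogether — and in fact the same private-element observation is what one would use to justify the paper's independence assertion, so your version is the more self-contained of the two.
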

\begin{proof}
    It is sufficient to prove the proposition in the case where $\calm$ is connected,
    so assume $\calm$ is connected.
    We first show that $w^{\cals,\alpha}$ is indeed an $\calm$-ultrametric.
    Define $\lambda_E := -\alpha(E)$
    and for each $F \in \cals\setminus E$, define
    $\lambda_F := -\alpha(F) + \alpha(G)$
    where $G$ is the minimal element of $\cals$ strictly containing $F$ (Lemma \ref{lem:hierarchy} implies that
    a unique such $G$ exists).
    For each $F \in \cals$, let $v_F$ be as in Definition \ref{defn:nestedSetComplex}.
    Then $w^{\cals,\alpha} = \sum_{F \in \cals} \lambda_F v_F$.
    Since $\alpha$ is compatible with $\cals$,
    $F \neq E$ implies that $\lambda_F$ is nonnegative.
    This shows that $w^{\cals,\alpha}$ is in the nested set fan.
    Proposition \ref{prop:nestedRefinesBergman} then implies that $w^{\cals,\alpha}$
    is a $\calm$-ultrametric.

    Now let $w$ be an arbitrary $\calm$-ultrametric.
    By Proposition \ref{prop:nestedRefinesBergman} and Definition \ref{defn:bergmanFanTopology},
    $w = \sum_{F \in \calt(w)} \lambda_F v_F$
    for some choice of coefficients $\lambda_F$
    satisfying $\lambda_F > 0$ when $F \neq E$.
    Set $\alpha(E) := -\lambda_E$, and for each $F \in \calt(w) \setminus \{E\}$
    inductively set $\alpha(F) := -\lambda_F + \alpha(G)$
    where $G$ is the minimal element of $\calt(w)$ containing $F$.
    Note that $\alpha(F) < \alpha(F')$ whenever $F \subsetneq F'$ and that $w = w^{\calt(w),\alpha}$.
    Uniqueness of $\alpha$ follows from the fact that this map from the $\lambda_F$'s
    to the $\alpha(F)$'s is invertible and that $\{v_F: F \in \calt(w)\}$
    is a linearly independent set.
\end{proof}

Proposition \ref{prop:displayOnTopology} gives us a way to display an $\calm$-ultrametric
that generalizes the way we can display an ultrametric on its tree topology.
Namely, if $w$ is an $\calm$-ultrametric and $\alpha: \calt(w) \rightarrow \rr$
is such that $w = w^{\calt(w),\alpha}$,
we can specify $w$ by drawing the Hasse diagram for $\calt(w)$
(which is a forest by Lemma \ref{lem:hierarchy})
and labeling each $F \in \calt(w)$ with $\alpha(F)$.
We now show this in an example.

\begin{ex}\label{ex:displayOnTopology}
	The left side of Figure \ref{fig:multrametricOnTopology}
	displays a $\calm(G)$-ultrametric $w$ as an edge weighting of the graph $G$.
	On its right is $\calt(w)$ where each flat $F \in \calt(w)$
	is labeled by $\alpha(F)$ where $\alpha: \calt(w) \rightarrow \rr$
	satisfies $w = w^{\calt(w),\alpha}$.
	Since the graph $G$ is not biconnected,
	the matroid $\calm(G)$ is disconnected and so $\calt(w)$ is disconnected.
    \begin{figure}\centering
        \begin{subfigure}{0.49\textwidth}
            \begin{tikzpicture}
                \vertex (a) at (0,0)[label=left:$a$]{};
                \vertex (b) at (1,1)[label=above:$b$]{};
                \vertex (c) at (1,-1)[label=below:$c$]{};
                \vertex (d) at (2,0)[label=above:$d$]{};
                \vertex (e) at (3,1)[label=above:$e$]{};
                \vertex (f) at (3,-1)[label=below:$f$]{};
                \vertex (g) at (4,0)[label=right:$g$]{};
                \path
                    (a) edge node[above,pos=.4]{$1$} (b) 
                    (a) edge node[below,pos=.3]{$3$} (c)
                    (b) edge node[above,pos=.6]{ $2$} (d)
                    (c) edge node[below,pos=.7]{$3$} (d)
                    (a) edge node[above]{$2$} (d)
                    (d) edge node[above,pos=.4]{$3$} (e)
                    (d) edge node[below,pos=.3]{$2$}  (f)
                    (e) edge node[right]{$3$} (f)
                    (e) edge node[above,pos=.6]{$3$} (g)
                    (f) edge node[right,pos=.4]{$1$} (g)
                ;
            \end{tikzpicture}
        \end{subfigure}
        \begin{subfigure}{0.49\textwidth}
        	\hspace{-5ex}
        	\begin{tikzpicture}
        	    \node (a) at (0,2){$3 \ \{ab,ac,ad,bd,cd\}$};
        	    \node (b) at (0,1){$2 \ \{ab,ad,bd\}$};
        	    \node (c) at (0,0){$1 \ \{ab\}$};
        	    \node (d) at (4.5,2){$3 \ \{de,df,ef,eg,fg\}$};
        	    \node (e) at (3.5,0.5){$2 \ \{df\}$};
        	    \node (f) at (5.5,0.5){$1 \ \{fg\}$};
        	    \draw (a) -- (b);
        	    \draw (b) -- (c);
        	    \draw (d) -- (e);
        	    \draw (d) -- (f);
        	\end{tikzpicture}
        \end{subfigure}
        \caption{An $\calm(G)$-ultrametric $w$,
        displayed as an edge-weighting of $G$
        and using the $\alpha: \calt(w) \rightarrow \rr$ as described
        in Proposition \ref{prop:displayOnTopology}.}\label{fig:multrametricOnTopology}
    \end{figure}
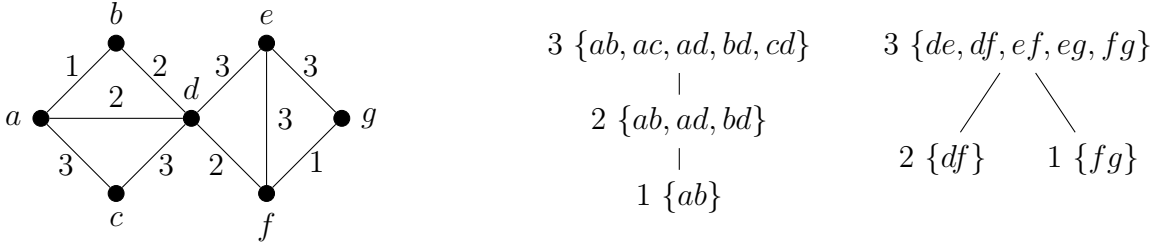
\end{ex}

We now generalize the concepts of polytomy and resolution from rooted trees
representing ultrametrics to nested sets representing $\calm$-ultrametrics.

\begin{defn}\label{defn:polytomyAndResolution}
	Let $\calm$ be a matroid on ground set $E$ and let $\cals$ be a nested set of $\calm$.
	A \emph{polytomy of $\cals$} is an element $F \in \cals$ such that
	$\rank(F / \bigcup_G G) > 1$ where the union is taken over all $G \in \mathcal{S}$
	such that $G \subsetneq F$.
	A \emph{resolution} of $\cals$ is another nested set $\cals'$
	without polytomies such that $\cals \subseteq \cals'$.
\end{defn}

If $\calt(w)$ has a polytomy,
then the nested set representation of $w$ is not unique.
In particular, $w$ can be represented on any nested set $\cals$
that is a resolution of $\calt(w)$.

\begin{ex}\label{ex:polytomyAndResolution}
	On the left side of Figure \ref{fig:polytomyAndResolution},
	we see a nested set $\cals_1$ of the matroid $\calm(K_4)$ underlying the complete graph
	on vertex set $\{a,b,c,d\}$.
	Since $\calm(K_4)/\{ab\}$ is a matroid of rank $2$,
	the set $\{ab,ac,ad,bc,bd,cd\}$ is a polytomy of $\cals_1$.
	To its right are the two possible resolutions $\cals_2$ and $\cals_3$.
	Each $\cals_i$ is shown with a compatible $\alpha_i : \cals_i \rightarrow \rr$,
	thus giving us the $\calm(K_4)$-ultrametrics $w^{\cals_i,\alpha_i}$.
	Note that $w^{\cals_1,\alpha_1} = w^{\cals_2,\alpha_2} = w^{\cals_3,\alpha_3}$
	and that the topology of this $\calm(K_4)$-ultrametric is $\cals_1$.
	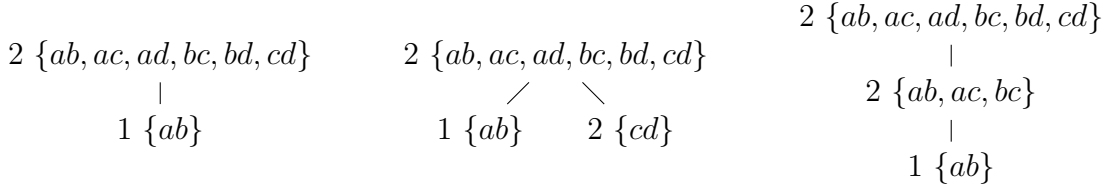
\begin{figure}[h]
		\begin{subfigure}{0.32\textwidth}
	        \begin{tikzpicture}
                \node at (0,2){};
                \node at (0,0){};
	        	\node (a) at (0,1.5){$2 \ \{ab,ac,ad,bc,bd,cd\}$};
        	    \node (b) at (0,0.5){$1 \ \{ab\}$};
        	    \draw (a) -- (b);
                \node at (0,-1.225){$\mathcal{S}_1$};
	        \end{tikzpicture}
	    \end{subfigure}
		\begin{subfigure}{0.32\textwidth}
	        \begin{tikzpicture}
                \node at (0,2){};
                \node at (0,0){};
	        	\node (a) at (1,1.5){$2 \ \{ab,ac,ad,bc,bd,cd\}$};
	        	\node (b) at (2,0.5){$2 \ \{cd\}$};
        	    \node (c) at (0,0.5){$1 \ \{ab\}$};
        	    \draw (a) -- (b);
        	    \draw (a) -- (c);
                \node at (1,-1.225){$\mathcal{S}_2$};
	        \end{tikzpicture}
	    \end{subfigure}
	    \begin{subfigure}{0.32\textwidth}
	        \begin{tikzpicture}
	        	\node (a) at (0,2){$2 \ \{ab,ac,ad,bc,bd,cd\}$};
        	    \node (b) at (0,1){$2 \ \{ab,ac,bc\}$};
        	    \node (c) at (0,0){$1 \ \{ab\}$};
                \node at (0,1.5){};
                \node at (0,0.5){};
        	    \draw (a) -- (b) -- (c);
                \node at (0,-1){$\mathcal{S}_3$};
	        \end{tikzpicture}
	    \end{subfigure}
	\caption{
		A nested set $\cals_1$ of the complete graph on vertex set $\{a,b,c,d\}$
		with a polytomy	and its two resolutions $\cals_2$ and $\cals_3$.
		The weightings on each nested set all give rise to the
		same $\calm(K_4)$-ultrametric.
	}\label{fig:polytomyAndResolution}
	\end{figure}
\end{ex}


\section{L-infinity optimization to Bergman fans of matroids}\label{sec:bergmanOptimization}
The first important result of this section is Proposition \ref{prop:tropicalPolytope},
which says that the subset of a Bergman fan $\berg(\calm) \subseteq \rr^E$ consisting
of all points $l^\infty$-nearest to a given $x \in \rr^E$ is a tropical polytope.
The main result of this section is Theorem \ref{thm:verticesAlgorithm},
which describes a generating set of this tropical polytope.
In light of Proposition \ref{prop:completion},
Theorem \ref{thm:verticesAlgorithm} is applicable for ultrametric reconstruction
in cases where the data consists only of a subset of all pairwise distances.
We begin by recalling a result of Ardila,
establishing a connection between ultrametric reconstruction and tropical convexity.

\begin{prop}[\cite{ardila2004}, Proposition 4.1]\label{prop:linearpolytope}
    The Bergman fan $\berg(\calm)$ is a tropical polyhedral cone.
\end{prop}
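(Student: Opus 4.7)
The plan is to invoke the standard circuit characterization of the Bergman fan and observe that it cuts out $\berg(\calm)$ as the solution set of finitely many max-plus tropical linear inequalities of the form appearing in Section \ref{sec:tropicalConvexity}.

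More precisely, I would first establish: $w \in \berg(\calm)$ if and only if for every circuit $C$ of $\calm$, the maximum $\max_{e \in C} w_e$ is attained by at least two elements of $C$. In the forward direction, suppose the max on some circuit $C$ is uniquely attained at $e^\ast$, and let $B$ be any basis containing $e^\ast$. Since $C$ is dependent, there is some $f \in C \setminus B$, and by hypothesis $w_f < w_{e^\ast}$; the strong circuit--basis exchange axiom then produces a basis $(B \setminus \{e^\ast\}) \cup \{f\}$ of strictly smaller weight, so $B$ is not of minimum weight. Thus $e^\ast$ lies in no $w$-minimum basis, contradicting $w \in \berg(\calm)$. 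For the converse, fix any non-loop $e \in E$ and any $w$-minimum basis $B_0$; if $e \notin B_0$, let $C_0 \ni e$ be the fundamental circuit of $e$ with respect to $B_0$. Minimality of $B_0$ forces $w_f \le w_e$ for every $f \in C_0 \cap B_0$, so $w_e$ is the max on $C_0$; the circuit condition then produces a second element $f \in C_0 \cap B_0$ with $w_f = w_e$, and $(B_0 \setminus \{f\}) \cup \{e\}$ is another $w$-minimum basis containing $e$.

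Next, I would translate the circuit condition into tropical inequalities. Attainment of the max at least twice on $C$ is equivalent, for each $e \in C$, to the inequality
\[
    \bigoplus_{f \in C \setminus \{e\}} w_f \;\ge\; w_e,
\]
which is a tropical linear inequality of the form $A \odot w \ge A' \odot w$, where $A$ and $A'$ are $\rmax$-matrices whose rows are indexed by pairs $(C,e)$ with $C$ a circuit and $e \in C$, and whose entries are only $0$ or $-\infty$ (a $0$ in row $(C,e)$ of $A$ in each column indexed by $f \in C \setminus \{e\}$, and a $0$ in row $(C,e)$ of $A'$ in the column indexed by $e$). Since the matroid has only finitely many circuits, this is a finite system carving out $\berg(\calm)$ as $P(A, -\infty, A', -\infty)$, exhibiting it as a tropical polyhedral cone.

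The main obstacle is the circuit characterization itself. It is a classical matroid-theoretic fact, but one must be careful about which version to use: the naive guess of requiring the \emph{minimum} (rather than the maximum) on each circuit to be attained at least twice actually fails, as one can check on the uniform matroid $U_{2,3}$ (for instance, $w = (0,1,1)$ lies in $\berg(U_{2,3})$ but has a unique min on the lone circuit $\{1,2,3\}$). Matching the correct direction of the characterization with the max-plus convention of Section \ref{sec:tropicalConvexity} is thus the only subtlety; once that is handled, the translation into tropical inequalities is immediate.
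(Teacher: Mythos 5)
The paper gives no argument for this proposition at all—it is quoted from Ardila's paper (Proposition 4.1 of \cite{ardila2004})—so your proposal is a self-contained substitute rather than a parallel of anything in the text. Your overall route is sound and is the natural one: the circuit characterization (max over every circuit attained at least twice) is correct for the paper's min-weight-basis definition of $\berg(\calm)$, your converse argument via fundamental circuits and equal-weight swaps is fine, and the translation of ``max attained twice'' into the finite system $\bigoplus_{f \in C\setminus\{e\}} w_f \ge w_e$ does exhibit a tropical polyhedral cone in the sense of Section 2. One small mismatch worth a sentence: $P(A,-\infty,A',-\infty)$ lives in $\rmax^E$ and contains points with $-\infty$ coordinates (e.g.\ the all-$(-\infty)$ vector), whereas $\berg(\calm) \subseteq \rr^E$, so strictly speaking you recover $\berg(\calm)$ as the real part of that cone; the paper glosses the same point, and it is harmless downstream since Proposition \ref{prop:tropicalPolytope} intersects with a cube around a real point.

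The one step that is wrong as written is the exchange in your forward direction: you pick an arbitrary $f \in C \setminus B$ and assert that $(B\setminus\{e^\ast\})\cup\{f\}$ is a basis. That fails for arbitrary $f$. For example, in $\calm(K_4)$ take $B=\{12,23,34\}$, the circuit $C=\{12,24,14\}$, $e^\ast=12$, and $f=24$: then $(B\setminus\{12\})\cup\{24\}=\{23,34,24\}$ is the triangle on $\{2,3,4\}$, not a basis. What is true, and is all you need, is that \emph{some} $f\in C\setminus B$ works: since $e^\ast \in \cl(C\setminus\{e^\ast\})$, the set $C\setminus\{e^\ast\}$ cannot lie entirely in $\cl(B\setminus\{e^\ast\})$, so choose $f \in C\setminus\{e^\ast\}$ outside that closure; then $(B\setminus\{e^\ast\})\cup\{f\}$ is a basis, $f\notin B$, and $w_f < w_{e^\ast}$ holds automatically because $e^\ast$ is the unique maximizer on $C$. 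So the fix is only a reordering of quantifiers—let the exchange lemma produce $f$, then invoke uniqueness of the maximum—but as stated (``there is some $f\in C\setminus B$ \dots the strong circuit--basis exchange axiom then produces a basis'') the step is not valid, and ``strong circuit--basis exchange axiom'' is not a standard axiom you can cite without the closure argument above or an explicit reference.
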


We introduce some notation.
Given points $x,y \in \rr^E$ and a set $S \subseteq \rr^E$,
we denote the $l^\infty$-distance between $x$ and $y$ by $d(x,y)$
and $\inf_{y \in S}d(x,y)$ by $d(x,S)$.
Given some $x \in \rr^E$, we define the subset of $\berg(\calm)$
consisting of the $\calm$-ultrametrics that are $l^\infty$-nearest to $x$
by $C(x,\berg(\calm))$.
That is, $C(x,\berg(\calm)) = \{w \in \berg(\calm): d(x,w) = d(x,\berg(\calm))\}$.
The next proposition says that this set is a tropical polytope.

\begin{prop}\label{prop:tropicalPolytope}
    If $\calm$ is a matroid on ground set $E$ and $x \in \rr^E$,
    then the subset of the Bergman fan of $\calm$ consisting
    of elements $l^\infty$-nearest to $x$
    is a tropical polytope.
\end{prop}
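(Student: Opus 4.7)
The plan is to exhibit $C(x, \berg(\calm))$ as the intersection of $\berg(\calm)$ with a closed $l^\infty$-ball around $x$, argue that each of these two sets is a tropical polyhedron in the sense of Section \ref{sec:tropicalConvexity}, and then invoke boundedness to upgrade ``tropical polyhedron'' to ``tropical polytope.''

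First I would set $r := d(x, \berg(\calm))$, which is well-defined and attained because $\berg(\calm)$ is a nonempty finite union of closed polyhedral cones in $\rr^E$ and the $l^\infty$-distance is continuous. Then, tautologically,
\[
    C(x, \berg(\calm)) \;=\; \berg(\calm) \cap B, \qquad B := \{w \in \rr^E : \|w - x\|_\infty \le r\}.
\]
Second, I would check that the $l^\infty$-ball $B$ is a tropical polyhedron. It is cut out by the $2|E|$ inequalities $w_e \le x_e + r$ and $w_e \ge x_e - r$ for $e \in E$, each of which compares a single coordinate to a constant and therefore fits directly into the format $A \odot w \oplus b \ge C \odot w \oplus d$ from Section \ref{sec:tropicalConvexity} (take the corresponding row of $A$ or $C$ to be $0$ in position $e$ and $-\infty$ elsewhere, and use $b$ or $d$ for the constant). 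Proposition \ref{prop:linearpolytope} already tells us that $\berg(\calm)$ is a tropical polyhedron (in fact, a tropical polyhedral cone).

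Third, I would use the fact that the intersection of two tropical polyhedra is again a tropical polyhedron: if both are presented in the form $A \odot w \oplus b \ge C \odot w \oplus d$, just vertically concatenate the $A$'s, $C$'s, $b$'s, and $d$'s, since the inequality is coordinate-wise. Hence $C(x, \berg(\calm))$ is a tropical polyhedron. Finally, $C(x, \berg(\calm)) \subseteq B$ is bounded in the usual Euclidean sense, so it is a bounded tropical polyhedron, i.e., a tropical polytope.

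The main (modest) obstacle is the bookkeeping in the second step: one has to translate the ordinary inequalities defining the $l^\infty$-ball into the max-plus matrix format, being careful with $-\infty$ entries so that only a single coordinate is active in each row, and to confirm that the convention of \cite{allamigeon-gaubert-goubault2010} adopted by the paper (rather than the alternative of \cite{develin-sturmfels2004}) cleanly accommodates such single-coordinate-versus-constant constraints. Once this is in hand, closure under intersection and the boundedness criterion finish the proof essentially for free.
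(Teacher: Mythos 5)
Your proof is correct and follows essentially the same route as the paper: the paper also writes $C(x,\berg(\calm))$ as the intersection of $\berg(\calm)$ (a tropical polyhedron by Proposition \ref{prop:linearpolytope}) with the $l^\infty$-ball (cube) around $x$, notes that this intersection is a tropical polyhedron, and concludes by boundedness. Your extra bookkeeping writing the cube constraints in the $P(A,b,C,d)$ format just makes explicit what the paper leaves as ``$C$ is clearly a tropical polytope.''
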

\begin{proof}
    Let $C$ denote the cube of side-length $d(x,\berg(\calm))$ centered at $x$.
    Therefore we can express $C(x,\berg(\calm)) = \berg(\calm) \cap C$.
    Proposition \ref{prop:linearpolytope} tells us that $\berg(\calm)$ is a tropical polyhedron
    and $C$ is clearly a tropical polytope.
    Their intersection is again a tropical polyhedron.
    Since it is bounded it is
    by definition a tropical polytope.
\end{proof}

Much of the remainder of this section is devoted to describing the set of
tropical vertices of $C(x,\berg(\calm))$.
Now we recall the concept of a subdominant $\calm$-ultrametric,
the existence of which was proven by Ardila in \cite{ardila2004}.

\begin{defn}[\cite{ardila2004}]\label{defn:subdominant}
    Let $\calm$ be a matroid on ground set $E$ and let $x \in \rr^E$.
    Let $x^\calm$ denote the unique coordinate-wise maximum $\calm$-ultrametric which is coordinate-wise at most $x$.
    We call $x^\calm$ the \emph{subdominant $\calm$-ultrametric of $x$}.
\end{defn}

Given some $x \in \rr^E$, Ardila shows how the first three steps of the algorithm from
Theorem \ref{thm:chepoi} can be extended to compute the subdominant $\calm$-ultrametric of $x$.
Then the subdominant ultrametric can be shifted to obtain an $l^\infty$-nearest ultrametric
that is coordinate-wise maximal among all $l^\infty$-nearest ultrametrics.

\begin{lemma}\label{lem:maximalClosestMUltrametric}
    Let $\calm$ be a matroid on ground set $E$, $x \in \rr^E$, and $\delta = \frac{1}{2}d(x,x^\calm)$.
    Then
    \begin{enumerate}
        \item\label{item:distance} The $l^\infty$-distance from $x$ to $\berg(\calm)$ is $\delta$,
        \item\label{item:canonicalclosest} $x^\calm + \delta \cdot {\bf 1}$ is an $\calm$-ultrametric, $l^\infty$-nearest to $x$,
        \item\label{item:maximal} $x^\calm + \delta \cdot {\bf 1}$ is maximal among $\calm$-ultrametrics
        $l^\infty$-nearest to $x$.
    \end{enumerate}
\end{lemma}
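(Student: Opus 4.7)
The plan is to exploit two key properties: (i) Bergman fans contain the all-ones direction in their lineality space (since adding a constant $c$ to every coordinate of a weight vector shifts the weight of every basis by $c\cdot\rank(\calm)$, leaving the minimum-weight bases unchanged), and (ii) the defining property of the subdominant $\calm$-ultrametric, namely that $x^\calm$ is the coordinate-wise maximum among $\calm$-ultrametrics that are $\le x$ coordinate-wise.

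First I would prove parts (\ref{item:distance}) and (\ref{item:canonicalclosest}) in tandem. Using (i), the vector $x^\calm + \delta\cdot{\bf 1}$ is an $\calm$-ultrametric. Since $x^\calm \le x$ coordinate-wise and $\max_{e\in E}(x_e - x^\calm_e) = 2\delta$ by definition of $\delta$, for every $e$ one has $x_e - x^\calm_e - \delta \in [-\delta,\delta]$, with equality $\delta$ attained at the argmax. Hence
\[
\|x - (x^\calm + \delta\cdot{\bf 1})\|_\infty = \delta,
\]
which gives the upper bound $d(x,\berg(\calm)) \le \delta$ and establishes (\ref{item:canonicalclosest}) once the matching lower bound is in hand.

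For the lower bound, suppose $w \in \berg(\calm)$ satisfies $\|x - w\|_\infty = r$. Then $w_e - r \le x_e$ for every $e$, so $w - r\cdot{\bf 1} \le x$ coordinate-wise. By (i), $w - r\cdot{\bf 1}$ is again an $\calm$-ultrametric, and now by the subdominance property of $x^\calm$,
\[
w - r\cdot{\bf 1} \le x^\calm.
\]
Rearranging and combining with $x_e \le w_e + r$ yields $x_e - x^\calm_e \le 2r$ for all $e$, and taking the max over $e$ gives $2\delta \le 2r$. This proves $d(x,\berg(\calm)) \ge \delta$, completing (\ref{item:distance}).

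Finally, part (\ref{item:maximal}) follows from the same inequality. If $w$ is any $\calm$-ultrametric $l^\infty$-nearest to $x$, then by parts (\ref{item:distance}) and (\ref{item:canonicalclosest}) we have $\|x - w\|_\infty = \delta$, so the argument above with $r = \delta$ gives $w - \delta\cdot{\bf 1} \le x^\calm$, i.e. $w \le x^\calm + \delta\cdot{\bf 1}$ coordinate-wise, which is the claimed maximality. There is no real obstacle here; the only thing one must be careful about is invoking the Bergman-fan shift invariance in two places (once to produce the candidate $x^\calm + \delta\cdot{\bf 1}$, and once to shift an arbitrary nearby $w$ into the region where the subdominant property applies).
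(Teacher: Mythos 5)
Your proposal is correct and follows essentially the same route as the paper: translate by multiples of ${\bf 1}$ (using that the all-ones direction preserves the Bergman fan) and invoke the defining property of the subdominant $\calm$-ultrametric, both for the distance bound and for maximality. The only differences are cosmetic — you argue directly (even obtaining that $x^\calm + \delta\cdot{\bf 1}$ is the coordinate-wise maximum of all nearest $\calm$-ultrametrics) where the paper argues by contradiction, and you make the shift-invariance explicit where the paper leaves it implicit.
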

\begin{proof}
    The existence of $x^\calm + \delta \cdot {\bf 1}$ shows that $d(x,\berg(\calm)) \le \delta$.
    Suppose there exists $w \in \berg(\calm)$ such that $d(x,w) < \delta$.
    Then $w - d(x,w)\cdot{\bf 1}$ is coordinate-wise at most $x$.
    There exists
    $e \in E$ such that $x_e - x^\calm_e = 2 \delta$
    and so $x^\calm_e < w_e - d(x,w)$. Thus, $w-d(x,w)\cdot{\bf 1}$ is an ultrametric
    coordinate-wise at most $x$ but not coordinate-wise at most $x^\calm$, contradicting
    that $x^\calm$ is the subdominant $\calm$-ultrametric.
    So (\ref{item:distance}) is proven and (\ref{item:canonicalclosest}) immediately follows.

    If (\ref{item:maximal}) were false and there existed some $\calm$-ultrametric $y \in C(x,\berg(\calm))$
    such that $y \ge x^\calm + \delta \cdot {\bf 1}$ with inequality somewhere,
    then $y - \delta \cdot {\bf 1}$ would not be coordinate-wise at most $x^\calm$.
    However, it would be coordinate-wise at most $x$,
    thus contradicting that $x^\calm$ is the subdominant $\calm$-ultrametric.
\end{proof}

\begin{defn}
    Given $x \in \rr^E$,
    we denote by $x^m$ the $l^\infty$-nearest ultrametric $x^\calm + d(x,\berg(\calm))\cdot {\bf 1}$
    and call it the \emph{maximal closest $\calm$-ultrametric to $x$}.
\end{defn}

\begin{ex}\label{ex:closest}
    Let $G$ be the graph displayed in Figure \ref{fig:maxlClosest} and denote its edge set by $E$.
    Let $x \in \rr^E$ be as on the left of Figure \ref{fig:maxlClosest}.
    Then the subdominant $\calm(G)$-ultrametric $x^\calm$
    and its translation giving the $l^\infty$-nearest $\calm(G)$-ultrametric $x^m$
    are shown to the right.
    \begin{figure}\centering
        \begin{subfigure}{0.32\textwidth}
            \begin{tikzpicture}
                \node at (-1,0){$x:$};
                \vertex (a) at (0,0)[label=left:$a$]{};
                \vertex (b) at (1,1)[label=above:$b$]{};
                \vertex (d) at (1,-1)[label=below:$d$]{};
                \vertex (c) at (2,0)[label=right:$c$]{};
                \path
                    (a) edge node[left,pos=.75]{$2$ \ } (b) 
                    (a) edge node[below]{$4$} (c)
                    (b) edge node[right,pos=0.25]{ \ $5$} (c)
                    (c) edge node[right,pos=0.75]{ \ $10$} (d)
                    (a) edge node[left,pos=0.75]{$6$ \ } (d)
                ;
            \end{tikzpicture}
        \end{subfigure}
        \begin{subfigure}{0.32\textwidth}
            \begin{tikzpicture}
                 \node at (-1,0){$x^\calm:$};
                \vertex (a) at (0,0)[label=left:$a$]{};
                \vertex (b) at (1,1)[label=above:$b$]{};
                \vertex (d) at (1,-1)[label=below:$d$]{};
                \vertex (c) at (2,0)[label=right:$c$]{};
                \path
                    (a) edge node[left,pos=.75]{$2$ \ } (b) 
                    (a) edge node[below]{$4$} (c)
                    (b) edge node[right,pos=0.25]{ \ $4$} (c)
                    (c) edge node[right,pos=0.75]{ \ $6$} (d)
                    (a) edge node[left,pos=0.75]{$6$ \ } (d)
                ;
            \end{tikzpicture}
        \end{subfigure}
        \begin{subfigure}{0.32\textwidth}
            \begin{tikzpicture}
                 \node at (-1,0){$x^m:$};
                \vertex (a) at (0,0)[label=left:$a$]{};
                \vertex (b) at (1,1)[label=above:$b$]{};
                \vertex (d) at (1,-1)[label=below:$d$]{};
                \vertex (c) at (2,0)[label=right:$c$]{};
                \path
                    (a) edge node[left,pos=.75]{$4$ \ } (b) 
                    (a) edge node[below]{$6$} (c)
                    (b) edge node[right,pos=0.25]{ \ $6$} (c)
                    (c) edge node[right,pos=0.75]{ \ $8$} (d)
                    (a) edge node[left,pos=0.75]{$8$ \ } (d)
                ;
            \end{tikzpicture}
        \end{subfigure}
        \caption{An element $x \in \rr^E$ alongside its subdominant $\calm(G)$-ultrametric $x^\calm$
        and the $l^\infty$-nearest $\calm$-ultrametric $x^m$.} \label{fig:maxlClosest}
    \end{figure}
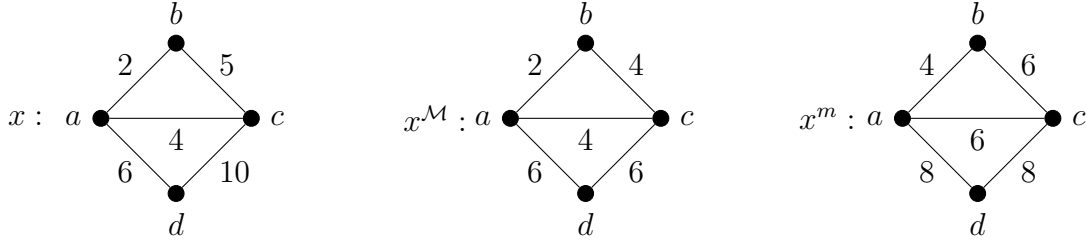
\end{ex}

Definition \ref{defn:mobileFlat} below introduces a way to decrease certain coordinates of an $\calm$-ultrametric $w$ that is $l^\infty$-nearest
to a given $x \in \rr^E$ to produce another $\calm$-ultrametric $l^\infty$-nearest to $x$.
The coordinates of $w$ that can be decreased are determined by what we will call \emph{mobile flats}.
We call the process of decreasing these coordinates \emph{sliding mobile flats (all the way) down}.
Theorem \ref{thm:verticesAlgorithm} uses these concepts
to describe a generating set of $C(x,\berg(\calm))$.

\begin{defn}\label{defn:mobileFlat}
    Let $\calm$ be a matroid on ground set $E$.
    Let $x \in \rr^E$ and let $w \in \berg(\calm)$ be $l^\infty$-nearest to $x$.
    Let $\cals$ be a resolution of $\calt(w)$ and $\alpha: \cals \rightarrow \rr$
    be compatible with $\cals$ satisfying $w = w^{\cals,\alpha}$.
    We say that $F \in \cals$ is \emph{mobile} if there exists an $\calm$-ultrametric $\hat w \neq w$
    expressible as $\hat w = w^{\cals,\hat \alpha}$ with $\hat \alpha$ compatible with $\cals$ such that
    \begin{enumerate}
        \item $\hat w$ is also nearest to $x$ in the $l^\infty$-norm
        \item $\hat \alpha(G) = \alpha(G)$ for all $G \neq F$, and
        \item $\hat \alpha(F) < \alpha(F)$.
    \end{enumerate}
    In this case, we say that $\hat w$ is obtained from $w$ by \emph{sliding $F$ down}.
    If moreover $F$ is no longer mobile in $w^{\cals,\hat \alpha}$, i.e. if
    $\hat\alpha(F) = \max\{\alpha(G): G \in \cals \textnormal{ and } G \subsetneq F\}$
    or $\hat \alpha(F)$ is the minimum value such that $w^{\cals,\hat \alpha}$ is $l^\infty$-nearest to $x$,
    then we say that $\hat w$ is obtained from $w$ by \emph{sliding $F$ all the way down}.
\end{defn}

\begin{rmk}\label{rmk:isMobile}
	Given some $x \in \rr^E$ and some $w^{\cals,\alpha}$ that is $l^\infty$-nearest to $x$,
	one can determine that a given $F \in \cals$ is mobile by decreasing $\alpha(F)$ by some small
	$\varepsilon > 0$ and seeing that the resulting $\calm$-ultrametric is still $l^\infty$-nearest to $x$.
\end{rmk}

\begin{rmk}\label{rmk:mobileInPolytomy}
	If $\cals$ is a resolution of $\calt(w)$ and $F \in \cals \setminus \calt(w)$
	is mobile, then $F$ is contained in a polytomy of $\calt(w)$
	and all elements of $\cals$ covered by $F$ are also in $\calt(w)$.
\end{rmk}

\begin{thm}\label{thm:verticesAlgorithm}
    Let $\calm$ be a matroid on ground set $E$ and let $x \in \rr^E$.
    Define $S_0 := \{x^m\}$ and for each $i \ge 1$,
    define $S_i$ to be the set of $\calm$-ultrametrics obtained from some $w \in S_{i-1}$
    by sliding a mobile flat in a resolution of $\calt(w)$ all the way down.
    Then
    \begin{enumerate}
        \item $\bigcup_i S_i$ is a finite set,
        \item the tropical convex hull of $\bigcup_i S_i$ is $C(x,\berg(\calm))$, and
        \item each tropical vertex $v$ of $C(x,\berg(\calm))$ has at most one mobile flat across
        all resolutions of $\calt(v)$.
    \end{enumerate}
\end{thm}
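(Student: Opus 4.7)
The plan is to establish the three claims in sequence; throughout I write $C := C(x,\berg(\calm))$ for brevity. The easy inclusion $\tconv(\bigcup_i S_i) \subseteq C$ follows by induction on $i$: the base case $x^m \in C$ is Lemma \ref{lem:maximalClosestMUltrametric}, the inductive step is immediate from the definition of mobility, and $C$ is tropically convex by Proposition \ref{prop:tropicalPolytope}.

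For finiteness (Claim~1), the key observation is that when a flat $F$ is slid all the way down in a nested-set representation $(\cals, \alpha)$, the new weight $\hat\alpha(F)$ is pinned to one of two kinds of values: either (i) $\hat\alpha(F) = \alpha(G)$ for some maximal child $G$ of $F$ in $\cals$, causing the topology of $\hat w$ to contract by one flat, or (ii) the sliding brings some $l^\infty$-constraint to tightness, forcing $\hat\alpha(F)$ into a value expressible in terms of coordinates of $x$ and the constant $d(x,\berg(\calm))$. Since there are only finitely many nested sets of $\calm$ and only finitely many candidate tight values, tracking the pair (nested-set representation, set of tight constraints) yields a combinatorial potential that strictly decreases on each iteration, so the process terminates.

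For the reverse containment in Claim~2, it suffices to show that every tropical vertex $v$ of $C$ lies in $\bigcup_i S_i$. I would produce, by induction on a suitable complexity measure, a chain $x^m = w_0, w_1, \ldots, w_k = v$ in $C$ in which each $w_{j+1}$ is obtained from $w_j$ by sliding a mobile flat all the way down. If $v \neq x^m$, then by Lemma \ref{lem:maximalClosestMUltrametric} there is a coordinate $e$ with $v_e < x^m_e$; among the flats of $\calt(v)$ containing $e$, one locates a flat $F$ whose weight $\alpha(F)$ can be increased to some $\beta(F)$ while the resulting ultrametric $w$ remains in $C$, after which $v$ is recovered from $w$ by sliding $F$ all the way back down. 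The main obstacle is exactly this inductive step: producing $w$ requires a careful local analysis of $C$ near its vertices, and one may need to realize $w$ on a resolution of $\calt(v)$ rather than $\calt(v)$ itself because the slide may pass through a topology change at the moment $\hat\alpha(F) = \alpha(G)$ for a child $G$.

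Claim~3 I would prove by contrapositive. Suppose $v \in C$ admits two distinct mobile flats $F_1, F_2$, possibly in different resolutions $\cals_1, \cals_2$ of $\calt(v)$. Sliding each by a small amount $\varepsilon > 0$ yields $v_1, v_2 \in C$ that differ from $v$ only on those coordinates whose minimal containing flat in $\cals_i$ is $F_i$. By Lemma \ref{lem:hierarchy}, the sets of ground-set elements whose minimal containing flat is $F_1$ (in $\cals_1$) versus $F_2$ (in $\cals_2$) are disjoint, regardless of whether $F_1$ and $F_2$ are comparable or incomparable. Consequently the coordinate-wise maximum satisfies $v = v_1 \oplus v_2$, and since $v_1, v_2 \neq v$ this is a nontrivial tropical sum, contradicting that $v$ is a tropical vertex.
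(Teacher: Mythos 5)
Your easy inclusion $\tconv(\bigcup_i S_i)\subseteq C(x,\berg(\calm))$ is fine, and your finiteness argument is acceptable in spirit: the real point is simply that every weight produced by an ``all the way down'' slide is either the weight of a flat below it or a tight value determined by a coordinate of $x$ and $d(x,\berg(\calm))$, so all coordinates of all elements of $\bigcup_i S_i$ range over a finite set (the ``strictly decreasing potential'' you invoke is neither needed nor clearly well defined, but the underlying finite-value observation carries the claim, and is essentially the paper's argument). The genuine gaps are in claims (2) and (3). For claim (3), the disjointness you assert is the entire content of the statement, and it does not follow from Lemma \ref{lem:hierarchy}: that lemma compares two flats inside a \emph{single} nested set, whereas $F_1\in\cals_1$ and $F_2\in\cals_2$ live in different resolutions of $\calt(v)$, and the coordinate sets moved by the two slides, $F_1\setminus\bigcup\{G\in\cals_1: G\subsetneq F_1\}$ and $F_2\setminus\bigcup\{G\in\cals_2: G\subsetneq F_2\}$, can a priori overlap when $F_1\cap F_2\neq\emptyset$, e.g.\ when both $F_i$ are flats inserted inside the same polytomy of $\calt(v)$. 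The paper closes exactly this gap with matroid theory: by Remark \ref{rmk:mobileInPolytomy} one reduces to $F_1,F_2$ sitting inside one polytomy $F$ of $\calt(v)$ with $U$ the union of the flats covered by $F$, and a rank computation (an element $e\in F_1\cap F_2\setminus U$ would force $\rank(F_1/U)>1$, making $F_1$ itself a polytomy of $\cals_1$) shows $F_1\cap F_2=U$, whence the moved sets $F_i\setminus U$ are disjoint and $v=v_1\oplus v_2$. Without that step your identity $v=v_1\oplus v_2$ is unjustified.

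For claim (2) you only state a strategy and yourself flag the inductive step as ``the main obstacle,'' so this part is not a proof; moreover your bottom-up plan has two specific unresolved problems. First, the intermediate point $w$ obtained from $v$ by raising $\alpha(F)$ need not be a tropical vertex, so induction over vertices does not apply to it, and you give no complexity measure under which $w$ is known to lie in $\bigcup_i S_i$ --- establishing that is exactly as hard as the original claim. Second, even granting $w\in\bigcup_i S_i$, sliding $F$ ``all the way down'' from $w$ may overshoot $v$ unless $F$ is non-mobile in $v$, and you never address this. The paper instead argues top-down: it builds a chain $x^m=w^0\ge w^1\ge\dots\ge v$ with $w^i\in S_i$; when $\calt(w^i)\subseteq\calt(v)$ it chooses a minimal flat $F$ with $\alpha_v(F)<\alpha_{w^i}(F)$ and uses the vertex property of $v$ together with claim (3) to show $F$ can be taken non-mobile in $v$, which is precisely what makes the slide terminate at $\alpha_v(F)$; when $\calt(w^i)\not\subseteq\calt(v)$ it uses the tropical segment $(t\odot w^i)\oplus v$ to locate a flat that can be slid all the way down while keeping $w^{i+1}\ge v$. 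This machinery is the core of the theorem and is absent from your sketch.
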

\begin{proof}
    We first prove that $\bigcup_i S_i$ is a finite set.
    Let $w \in S_i$ for some $i \ge 0$.
    Then each coordinate $w_e$ is either $x^m_f$
    or $x^m_f - d(x,\berg(\calm))$ for some $f \in E$, not necessarily equal to $e$.
    So as $w$ ranges over $\bigcup_i S_i$, there are only finitely many values that each $w_e$ can take
    and so $\bigcup_i S_i$ is a finite set.

    We now prove that each tropical vertex has at most one mobile flat.
    Let $v \in C(x,\berg(\calm))$.
    Let $\alpha$ be such that $v = w^{\calt(v),\alpha}$ (recall Definition \ref{defn:multrametricFromTopology}).
    If $\cals_1$ and $\cals_2$ are resolutions of $\calt(v)$
    and $F_i \in \cals_i$ is mobile,
    then there exist $\alpha_i: \cals_i \rightarrow \rr$
    compatible with $\cals_i$ such that $w^{\cals_i,\alpha_i} \in C(x,\berg(\calm))$,
    and $w^{\cals_i,\alpha_i}_e = v_e-\varepsilon$ for a fixed small $\varepsilon > 0$
    whenever $e \in F_i \setminus \bigcup_F F$
    where the union is taken over all $F \in S_i$ such that $F \subsetneq F_i$,
    and $w^{\cals_i,\alpha_i}_e = v_e$ for all other $e \in E$.
    We claim that $w^{\cals_1,\alpha_1}_e \neq v_e$ implies $w^{\cals_2,\alpha_2}_e = v_e$.
    When $F_1$ and $F_2$ are disjoint, the claim is obvious.
    When $F_1$ and $F_2$ are not disjoint,
    they must be subsets of the same polytomy $F \in \calt(v)$.
    Let $U$ be the union of all the flats covered by $F$ in $\calt(v)$.
    Then $U \subseteq F_1 \cap F_2$.
    Moreover, $U = F_1 \cap F_2$ because if
    $e \in F_1\cap F_2 \setminus U$,
    then $\rank(F_1 /U) > \rank((F_1 \cap F_2)/U) \ge \rank((U \cup \{e\})/U) = 1$).
    In light of Remark \ref{rmk:mobileInPolytomy},
    this is a contradiction because then $F_1$ would be a polytomy in $\cals_1$.
    The claim then follows because $w^{\cals_i,\alpha_i}_e \neq v_e$
    if and only if $e \in F_i\setminus U$.
    Now we have $v = w^{\cals_1,\alpha_1} \oplus w^{\cals_2,\alpha_2}$
    and so $v$ is not a tropical vertex of $C(x,\berg(\calm))$.

    Now we prove that the tropical convex hull of $\bigcup_i S_i$ is $C(x,\berg(\calm))$
    by showing that each vertex of $C(x,\berg(\calm))$ is a member of some $S_i$.
    So let $v$ be a tropical vertex of $C(x,\berg(\calm))$.
    We construct a sequence $x^m = w^0 \ge w^1 \ge \dots \ge v$
    such that $w^i \in S_i$ and $w^i \neq w^{i+1}$.
    Since $\bigcup_i S_i$ is finite, this sequence must eventually terminate
    and so the final $w^i$ is equal to $v$.
    Assuming $w^i$ has been constructed and satisfies $w^i \ge v$ and $w^i \neq v$,
    we show how to construct $w^{i+1}$ satisfying $w^i \ge w^{i+1} \ge v$ and $w^{i+1} \neq w^i$.

    First assume $\calt(w^i) \subseteq \calt(v)$.
    Let $\cals$ be a resolution of $\calt(v)$.
    Then $\cals$ is also a resolution of $\calt(w_i)$.
    Let $\alpha_{w^i}, \alpha_v$ be such that
    $w^{i} = w^{\cals,\alpha_{w^i}}$ and $v = w^{\cals,\alpha_v}$.
    Let $F \in \cals$ be a minimal element such that $\alpha_v(F) < \alpha_{w^i}(F)$.
    We can choose such an $F$ to be non-mobile in $v$.
    Otherwise, the unique mobile flat in $\cals$ of $w^i$ would be $F$,
    which would also be the unique mobile flat of $\cals$ in $v$
    and so for all $G \in \cals \setminus \{F\}$,
    $\alpha_{w^i}(G) = \alpha_v(G)$.
    Since $F$ is mobile in $v$,
    there exists some $\alpha: \cals \rightarrow \rr$ compatible with $\cals$
    such that $\alpha(G) = \alpha_v(G)$ for $G \neq F$
    but $\alpha(F) < \alpha_v(F)$ and $w^{\cals,\alpha} \in C(x,\berg(\calm))$.
    This contradicts $v$ being a vertex of $C(x,\berg(\calm))$
    because $v = (\alpha_v(F) - \alpha_{w^i}(F))\odot w^i \oplus w^{\cals,\alpha}$.
    So we can choose $F$ to be mobile in $w^{i}$ and not in $v$.
    Define $\alpha_{w^{i+1}}: \cals \rightarrow \rr$
    by $\alpha_{w^{i+1}}(G) = \alpha_{w^i}(G)$ when $G \neq F$
    and $\alpha_{w^{i+1}}(F) = \alpha_v(F)$.
    Define $w^{i+1} := w^{\cals,\alpha_{w^{i+1}}}$.
    Then $w^i \ge w^{i+1} \ge v$ and $w^{i+1}$
    is obtained from $w^i$ by sliding $F$ down.
    Since $F$ was chosen to be minimal such that $\alpha_v(F) < \alpha_{w^i}(F)$
    and $\alpha_{w^{i+1}}(G) = \alpha_{w^i}(G)$ when $G \neq F$,
    non-mobility of $F$ in $v$ implies non-mobility of $F$ in $w^{i+1}$.
    Hence $w^{i+1}$ is obtained from $w^{i}$ by sliding $F$ \emph{all the way} down
    and so $w^{i+1} \in S_{i+1}$.

    Now assume $\calt(w^i) \nsubseteq \calt(v)$.
    Denote $v^t := (t \odot w^i) \oplus v$.
    Since $C(x,\berg(\calm))$ is tropically convex,
    $v^t \in C(x,\berg(\calm))$ whenever $t < 0$.
    Let $t_0 < 0 $ maximum such that $\calt(w^i)\setminus \calt(v^{t_0})$ is nonempty
    and let $G \in \calt(w^i)\setminus \calt(v^{t_0})$ be maximal.
    Note that for small $\varepsilon > 0$,
    $\calt(w^i) \subseteq \calt(v^{t_0+\varepsilon})$
    and the minimal $H \in \calt(v^{t_0+\varepsilon})$ that strictly contains $G$
    is also a member of $\calt(v^{t_0})$.
    Let $\cals$ be a resolution of $\calt(v^{t_0+\varepsilon})$
    and therefore also a resolution of $\calt(w^i)$.
    Choose $K \in \cals$ such that $G \subsetneq K \subseteq H$
    and let $w^{i+1}$ be the result of sliding $K$ all the way down in $w^i$.
    Then $w^{i+1} \in S_{i+1}$ and $w^i \ge w^{i+1} \ge v^t \ge v$.
\end{proof}

{ As with Theorem \ref{thm:treesAlgorithm}, the set of $\calm$-ultrametrics
specified by Theorem~\ref{thm:verticesAlgorithm}(3) is, in general, a strict superset of
the set of tropical vertices; see \cite{yu2019extreme}.}

\begin{ex}\label{ex:multrametricsExample}
    Let $G$ be the graph from Figure \ref{fig:maxlClosest} and let $x$ be the edge-weighting displayed.
    We now describe how to use Theorem \ref{thm:verticesAlgorithm} to obtain a
    generating set of the tropical polytope consisting of the $\calm(G)$-ultrametrics
    that are $l^\infty$-nearest to $x$.
    Figure \ref{fig:theoremExample} shows the $\calm(G)$-ultrametrics in each nonempty $S_i$, displayed on their topologies.
    The mobile flats of the unique element $x^m$ of $S_0$ are $\{ab,ac,bc\}$ and $\{ab\}$.
    Sliding $\{ab\}$ all the way down yields the element of $S_1$ shown on the left,
    and sliding $\{ab,ac,bc\}$ all the way down yields the element of $S_1$ shown on the right.
    The only mobile flat of the element of $S_1$ shown on the left is $\{ab,ac,bc\}$.
    Sliding this all the way down yields the left-most element displayed in $S_2$.
    The element of $S_1$ shown on the right has $\{ab,ac,bc\}$ as a polytomy.
    There are three possible resolutions, the first obtained by adding the flat $\{ab\}$,
    the second by adding $\{ac\}$ and the third by adding $\{bc\}$.
    Each such flat is mobile, and the elements of $S_2$ obtained by sliding each all the way down
    are shown second, third, and fourth from the left in $S_2$.
    Continuing in this way yields the elements shown in $S_3$ and $S_4$.
    Note that there are no mobile flats in any element of $S_4$ so $S_i$ is empty for $i \ge 5$.
    The leftmost element of $S_2$ also appears in $S_3$ and $S_4$.
    A subset of $\bigcup_i S_i$ whose tropical convex hull is $C(x,\berg(\calm(G)))$ is shown in red.
    Note that we've omitted elements with two or more mobile flats, as well as repeated elements.
    \begin{figure}[h]\centering
        \begin{subfigure}{0.08\textwidth}\centering
            $S_0$
        \end{subfigure}
        \begin{subfigure}{0.9\textwidth}\centering
            \begin{tikzpicture}
                \node (a) at (0,2){$8 \ \{ab,ac,ad,bd,cd\}$};
                \node (b) at (0,1){$6 \ \{ab,ac,bc\}$};
                \node (c) at (0,0){$4 \ \{ab\}$};
                \draw (a) -- (b);
                \draw (b) -- (c);
            \end{tikzpicture}
        \end{subfigure}
        \\ \ \\ \ \\
        \begin{subfigure}{0.08\textwidth}\centering
            $S_1$
        \end{subfigure}
        \begin{subfigure}{0.45\textwidth}\centering
            \begin{tikzpicture}\color{red}
                \node (a) at (0,2){$8 \ \{ab,ac,ad,bd,cd\}$};
                \node (b) at (0,1){$6 \ \{ab,ac,bc\}$};
                \node (c) at (0,0){$0 \ \{ab\}$};
                \draw (a) -- (b);
                \draw (b) -- (c);
            \end{tikzpicture}
        \end{subfigure}
        \begin{subfigure}{0.45\textwidth}\centering
            \begin{tikzpicture}
                \node (a) at (0,2){$8 \ \{ab,ac,ad,bd,cd\}$};
                \node (b) at (0,1){$4 \ \{ab,ac,bc\}$};
                \draw (a) -- (b);
            \end{tikzpicture}
        \end{subfigure}
        \\ \ \\ \ \\
        \begin{subfigure}{0.08\textwidth}\centering
                $S_2$
        \end{subfigure}
        \begin{subfigure}{0.22\textwidth}\centering
            \begin{tikzpicture}\color{red}
                \node (a) at (0,2){$8 \ E \ \ $};
                \node (b) at (0,1){$3 \ \{ab,ac,bc\}$};
                \node (c) at (0,0){$0 \ \{ab\}$};
                \draw (a) -- (b);
                \draw (b) -- (c);
            \end{tikzpicture}
        \end{subfigure}
        \begin{subfigure}{0.22\textwidth}\centering
            \begin{tikzpicture}\color{red}
                \node (a) at (0,2){$8 \ E \ \ $};
                \node (b) at (0,1){$4 \ \{ab,ac,bc\}$};
                \node (c) at (0,0){$0 \ \{ab\}$};
                \draw (a) -- (b);
                \draw (b) -- (c);
            \end{tikzpicture}
        \end{subfigure}
        \begin{subfigure}{0.22\textwidth}\centering
            \begin{tikzpicture}\color{red}
                \node (a) at (0,2){$8 \ E \ \ $};
                \node (b) at (0,1){$4 \ \{ab,ac,bc\}$};
                \node (c) at (0,0){$2 \ \{ac\}$};
                \draw (a) -- (b);
                \draw (b) -- (c);
            \end{tikzpicture}
        \end{subfigure}
        \begin{subfigure}{0.22\textwidth}\centering
            \begin{tikzpicture}\color{red}
                \node (a) at (0,2){$8 \ E \ \ $};
                \node (b) at (0,1){$4 \ \{ab,ac,bc\}$};
                \node (c) at (0,0){$3 \ \{bc\}$};
                \draw (a) -- (b);
                \draw (b) -- (c);
            \end{tikzpicture}
        \end{subfigure}
        \\ \ \\ \ \\
        \begin{subfigure}{0.08\textwidth}\centering
                $S_3$ \ \ 
        \end{subfigure}
        \begin{subfigure}{0.29\textwidth}\centering
            \begin{tikzpicture}
                \node (a) at (0,2){$8 \ \{ab,ac,ad,bd,cd\}$};
                \node (b) at (0,1){$3 \ \{ab,ac,bc\}$};
                \node (c) at (0,0){$0 \ \{ab\}$};
                \draw (a) -- (b);
                \draw (b) -- (c);
            \end{tikzpicture}
        \end{subfigure}
        \begin{subfigure}{0.29\textwidth}\centering
            \begin{tikzpicture}\color{red}
                \node (a) at (0,2){$8 \ \{ab,ac,ad,bd,cd\}$};
                \node (b) at (0,1){$3 \ \{ab,ac,bc\}$};
                \node (c) at (0,0){$2 \ \{ac\}$};
                \draw (a) -- (b);
                \draw (b) -- (c);
            \end{tikzpicture}
        \end{subfigure}
        \begin{subfigure}{0.29\textwidth}\centering
            \begin{tikzpicture}
                \node (a) at (0,2){$8 \ \{ab,ac,ad,bd,cd\}$};
                \node (b) at (0,1){$3 \ \{ab,ac,bc\}$};
                \draw (a) -- (b);
            \end{tikzpicture}
        \end{subfigure}
        \\ \ \\ \ \\
        \begin{subfigure}{0.08\textwidth}\centering
                $S_4$
        \end{subfigure}
        \begin{subfigure}{0.45\textwidth}\centering
            \begin{tikzpicture}\color{red}
                \node (a) at (0,2){$8 \ \{ab,ac,ad,bd,cd\}$};
                \node (b) at (0,1){$3 \ \{ab,ac,bc\}$};
                \node (c) at (0,0){$2 \ \{ac\}$};
                \draw (a) -- (b);
                \draw (b) -- (c);
            \end{tikzpicture}
        \end{subfigure}
        \begin{subfigure}{0.45\textwidth}\centering
            \begin{tikzpicture}
                \node (a) at (0,2){$8 \ \{ab,ac,ad,bd,cd\}$};
                \node (b) at (0,1){$3 \ \{ab,ac,bc\}$};
                \node (c) at (0,0){$0 \ \{ab\}$};
                \draw (a) -- (b);
                \draw (b) -- (c);
            \end{tikzpicture}
        \end{subfigure}
        \caption{The nonempty $S_i$'s from Theorem \ref{thm:verticesAlgorithm} for the edge-weighting of the
        graph $G$ in Figure \ref{fig:maxlClosest}.}\label{fig:theoremExample}
    \end{figure}
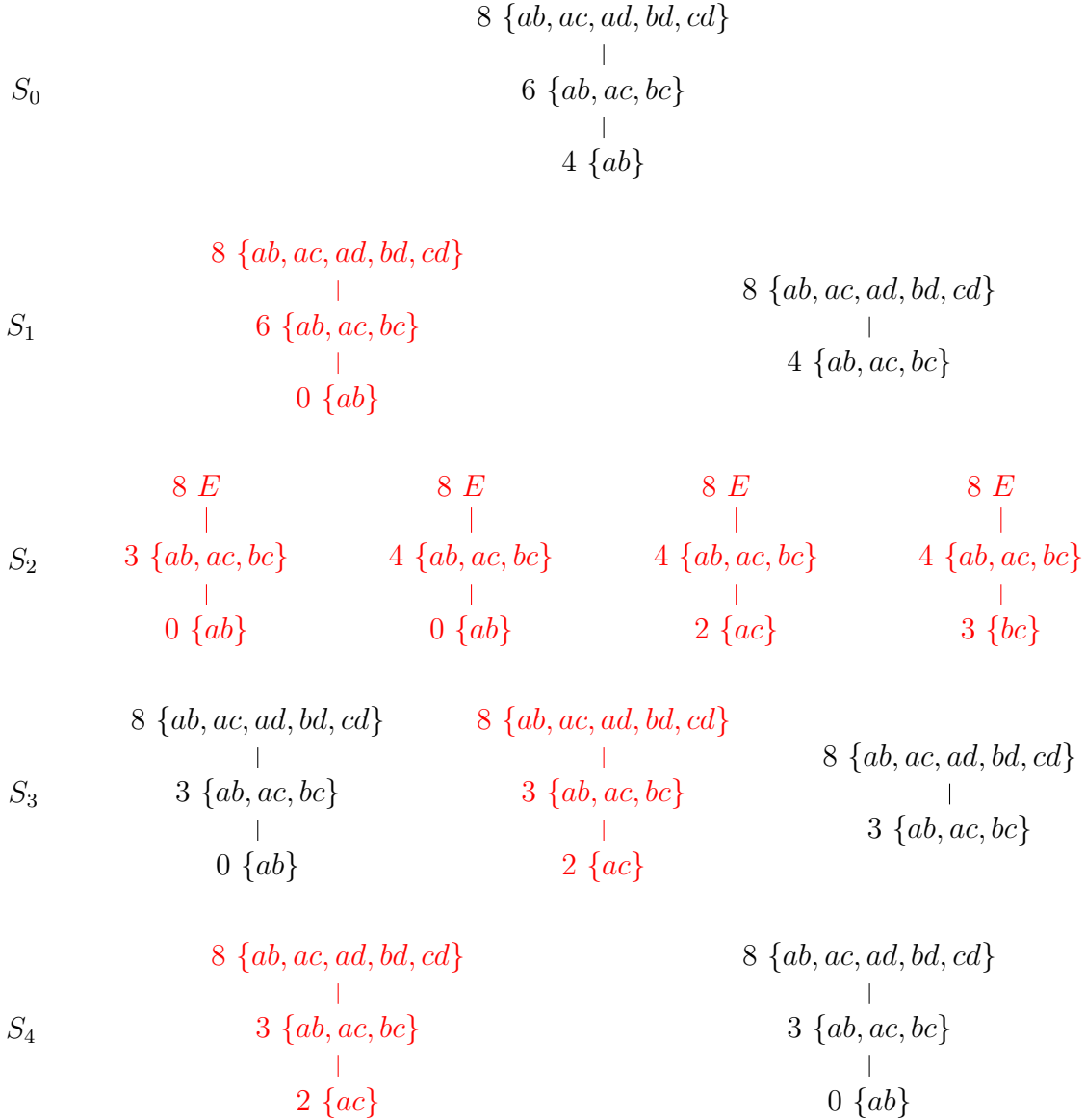
\end{ex}

The following proposition tells us
that if all the $\calm$-ultrametrics in the generating set of $C(x,\berg(\calm))$ indicated
by Theorem \ref{thm:verticesAlgorithm} have the same topology,
then all elements of $C(x,\berg(\calm))$ have the same topology.

\begin{prop}\label{prop:multipletopologies}
    Let $\calm$ be a matroid on ground set $E$ and let $x \in \rr^E$.
    Then set of all $\calm$-ultrametrics that are $l^\infty$-nearest to $x$
    have the same topology if and only if all tropical vertices of $C(x,\berg(\calm))$
    have the same topology.
\end{prop}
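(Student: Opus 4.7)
The plan is to give a short argument that leverages two facts already established in the paper: that $C(x,\berg(\calm))$ is a tropical polytope (Proposition \ref{prop:tropicalPolytope}), hence is equal to the tropical convex hull of its tropical vertices, and that the set of $\calm$-ultrametrics with a fixed topology is tropically convex (Proposition \ref{prop:topologyconvex}).

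The forward direction is immediate: the tropical vertices of $C(x,\berg(\calm))$ are in particular elements of $C(x,\berg(\calm))$, so if every element of the latter set shares a topology, so do the vertices.

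For the reverse direction, suppose all tropical vertices of $C(x,\berg(\calm))$ have the common topology $\cals$. Let $U_\cals$ denote the set of all $\calm$-ultrametrics whose topology is $\cals$. By Proposition \ref{prop:topologyconvex}, $U_\cals$ is tropically convex. By hypothesis the tropical vertices of $C(x,\berg(\calm))$ lie in $U_\cals$, and by Proposition \ref{prop:tropicalPolytope} together with the Gaubert--Katz theorem cited in Section \ref{sec:tropicalConvexity}, $C(x,\berg(\calm))$ equals the tropical convex hull of its tropical vertices. Hence $C(x,\berg(\calm)) \subseteq U_\cals$, i.e.\ every $\calm$-ultrametric $l^\infty$-nearest to $x$ has topology $\cals$.

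There is no main obstacle here; the work has already been done in Propositions \ref{prop:tropicalPolytope} and \ref{prop:topologyconvex}. The only subtlety worth flagging is that one should verify the argument goes through with the ``tropical convex hull equals tropical polytope'' identification for bounded sets, which is precisely the content cited from \cite{gaubert-katz2007} and \cite{joswig2005} in the preliminaries.
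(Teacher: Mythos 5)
Your argument is correct and is essentially the paper's own proof: the paper simply remarks that the statement follows immediately from Proposition \ref{prop:topologyconvex}, with the implicit use of the fact that $C(x,\berg(\calm))$ is the tropical convex hull of its tropical vertices, exactly as you spell out. Your added detail about the Gaubert--Katz identification is a fair elaboration of the same route, not a different one.
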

\begin{proof}
	This follows immediately from Proposition \ref{prop:topologyconvex}.
\end{proof}

When $\calm := \calm(G)$ is the matroid underlying some graph $G$,
then Theorem \ref{thm:verticesAlgorithm} has potential use for phylogenetics even when $G$
is not the complete graph.
In particular, it sometimes happens that only a subset of the pairwise distances
between $n$ species can be computed within a reasonable budget.
Then one may ask the question of which partial ultrametrics are $l^\infty$-nearest to
the observed distances.
Assuming that the observed distances correspond to the edge set $E$ of a graph $G$,
the following proposition tells us that the above question
is equivalent to: given some partial dissimilarity map $x \in \rr^E$,
which $\calm(G)$-ultrametrics are $l^\infty$-nearest to $x$?

\begin{prop}\label{prop:completion}
    Let $E \subsetneq \binom{[n]}{2}$,
    let $G$ be the graph with vertex set $[n]$ and edge set $E$,
    and let $x \in \rr^E$.
    Then we may extend $x$ to some ultrametric $x' \in \berg(\calm(K_n))$
    if and only if $x$ is an $\calm(G)$-ultrametric.
\end{prop}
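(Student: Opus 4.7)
I would prove both implications separately.

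For the forward direction, assume $x \in \rr^E$ admits an ultrametric extension $x' \in \berg(\calm(K_n))$. I would invoke the standard circuit characterization of Bergman fan membership: for any matroid $\calm$, a vector $w$ lies in $\berg(\calm)$ if and only if, for every circuit $C$ of $\calm$, the maximum of $w$ on $C$ is attained at least twice. This is equivalent to the paper's definition, since $e$ lies in some $w$-minimum basis precisely when $e$ is not the unique $w$-maximum of any circuit through $e$. The circuits of $\calm(G)$ are the cycles of $G$, which are also cycles of $K_n$ and hence circuits of $\calm(K_n)$. Thus the doubled-maximum condition that $x'$ satisfies on every cycle of $K_n$ restricts to the same condition for $x = x'|_E$ on every cycle of $G$, giving $x \in \berg(\calm(G))$.

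For the backward direction, assume $x \in \berg(\calm(G))$. Let $V_1,\dots,V_k$ be the vertex sets of the connected components of $G$ and fix a constant $M > \max_{e \in E} x_e$. I would define $x' \in \rr^{\binom{[n]}{2}}$ by
\[
  x'(i,j) := \begin{cases} \min_P \max_{f \in P} x_f & \text{if } i,j \in V_l \text{ for some } l, \\ M & \text{if } i,j \text{ lie in distinct components,} \end{cases}
\]
where $P$ ranges over paths in $G$ from $i$ to $j$.

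There are three things to verify. First, $x'|_E = x$: the one-edge path gives $x'(e) \le x_e$ for every $e \in E$, and strict inequality would produce a path $P$ in $G$ whose edges all satisfy $x_f < x_e$; but then $P$ together with $e$ contains a cycle on which $e$ is the unique $x$-maximum, contradicting the circuit characterization. Second, $x'$ satisfies the three-point ultrametric condition on every triple $\{i,j,k\}$: if all three vertices lie in a common component, this is the standard fact that the bottleneck distance $\min_P \max_f x_f$ is itself an ultrametric, proved by concatenating optimal $i\to k$ and $k \to j$ paths to obtain an $i \to j$ walk of bottleneck value at most $\max(x'(i,k), x'(j,k))$; if exactly two of $i,j,k$ share a component, two of the three pairwise $x'$-values equal $M$ and the third is strictly smaller; and if all three lie in distinct components, all three $x'$-values equal $M$. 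Third, by Theorem~\ref{thm:ardilaUltrametric}, every ultrametric on $[n]$ lies in $\berg(\calm(K_n))$, so $x'$ is the desired extension.

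The main obstacle is handling possibly disconnected $G$ in the backward direction, since the bottleneck construction alone leaves inter-component distances undefined. My fix is to set every inter-component entry of $x'$ to a common large constant $M$, which works precisely because any triple containing an inter-component pair automatically has two of its three sides equal to $M$ and therefore trivially satisfies the ultrametric inequality.
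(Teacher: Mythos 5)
Your proof is correct, but the extension direction takes a genuinely different route from the paper's. The paper extends $x$ one missing edge at a time: for each new edge $e$ it sets $x'_e$ to the maximum over cocircuits of $\calm(G')$ of the minimum edge weight appearing in that cocircuit, appeals to Ardila's cocircuit characterization of $\calm$-ultrametrics \cite{ardila2004} to see that each intermediate weighting stays in the Bergman fan, and concludes by induction on the missing edges. You instead build the whole extension in one shot, taking bottleneck (minimax path) distances inside each connected component of $G$ and a common large constant $M$ across components, verify the three-point condition case by case, and then invoke Theorem~\ref{thm:ardilaUltrametric}. Your construction is more elementary and essentially self-contained: beyond the standard circuit criterion for Bergman fan membership (no circuit has a unique maximum, which follows from the usual basis-exchange argument for minimum-weight bases), it needs no cocircuit machinery, and it treats disconnected $G$ explicitly, whereas the paper's edge-by-edge formula absorbs that case silently. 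What the paper's route buys is that it stays inside Ardila's framework and produces the new coordinates by a cocircuit formula in the spirit of the subdominant computation used elsewhere in the paper. For the restriction direction the two arguments agree in substance, but your circuit-based phrasing actually fills in a step the paper leaves implicit: an $x'$-minimum basis of $\calm(K_n)$ containing $e$ need not lie inside $G$, so passing from ``$e$ is in some $x'$-minimum basis of $\calm(K_n)$'' to ``$e$ is in some $x$-minimum basis of $\calm(G)$'' is cleanest exactly via the observation that a circuit of $\calm(G)$ with $e$ as unique maximum would also be such a circuit of $\calm(K_n)$.
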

\begin{proof}
    First let $x$ be a $\calm(G)$-ultrametric.
    Let $e \in \binom{[n]}{2} \setminus E$.
    Let $G'$ be the graph obtained by adding $e$ to $G$.
    We can extend $x$ to a $\calm(G')$-ultrametric $x'$
    by setting $x_e'$ to be the maximum
    of all the minimum edge weights appearing in some cocircuit of $\calm(G')$.
    That this is indeed an $\calm(G')$-ultrametric follows
    from Ardila's characterization of $\calm$-ultrametrics in terms of $\calm$'s cocircuits \cite{ardila2004}.
    By induction it follows that $x$ may be completed to an $\calm(K_n)$-ultrametric.
    \\
    \indent
    Now let $x \in \rr^E$ and assume that there exists some $x' \in \berg(\calm(K_n))$
    such that $x_e = x'_e$ for each $e \in E$.
    Since $x'$ is an $\calm(K_n)$-ultrametric, each $e \in E$ appears in some $x'$-minimal
    basis of $\calm(K_n)$.
    As $x'_e = x_e$ for each $e \in E$,
    it follows that each $e \in E$ appears in some $x$-minimal
    basis of $\calm(G)$.
    Therefore $x$ is an $\calm(G)$-ultrametric.
\end{proof}


\section{Example on a biological dataset}\label{sec:data}
    { Now that we understand how uniqueness of the $l^\infty$-nearest ultrametric
    can fail to be unique, one might wonder if this is likely to happen for a dissimilarity map
    not explicitly constructed to break uniqueness.
    To this end,} we now apply Theorem \ref{thm:treesAlgorithm} to the dataset 
    displayed in Figure \ref{fig:data}.
    It consists of pairwise immunological distances between the species
    dog, bear, raccoon, weasel, seal, sea lion, cat, and monkey
    that were obtained by Sarich in \cite{sarich1969pinniped}.
    It is used in the textbook \cite{felsenstein2004inferring} to illustrate the
    UPGMA and neighbor joining algorithms,
    which are two other distance-based methods for phylogenetic reconstruction.

    \begin{figure}
    \begin{tabular}{c| c c c c c c c c}
        & dog & bear & raccoon & weasel & seal & sea lion & cat & monkey\\
        \hline
        dog&0&32&48&51&50&48&98&148\\
        bear&32&0&26&34&29&33&84&136\\
        raccoon&48&26&0&42&44&44&92&152\\
        weasel&51&34&42&0&44&38&86&142\\
        seal&50&29&44&44&0&24&89&142\\
        sea lion&48&33&44&38&24&0&90&142\\
        cat&98&84&92&86&89&90&0&148\\
        monkey&148&136&152&142&142&142&148&0
    \end{tabular}
    \caption{Pairwise immunological distances between eight species.}\label{fig:data}
    \end{figure}

    \begin{figure}
        \begin{tikzpicture}[scale=0.42]
            \node (8) at (0,0)[label=below:M]{};
            \node (7) at (2,0)[label=below:C]{};
            \node (4) at (4,0)[label=below:D]{};
            \node (1) at (6,0)[label=below:W]{};
            \node (2) at (8,0)[label=below:B]{};
            \node (3) at (10,0)[label=below:R]{};
            \node (5) at (12,0)[label=below:S]{};
            \node (6) at (14,0)[label=below:SL]{};
            \node (23) at (9,1)[label=above:26]{};
            \node (56) at (13,1)[label=above:24]{};
            \node (2356) at (11.5,2)[label=above:37.5]{};
            \node (12356) at (9,3)[label=above:39.5]{};
            \node (123456) at (7,4)[label=above:45.8]{};
            \node (1234567) at (5,5)[label=above:89.8]{};
            \node (12345678) at (3,6)[label=above:144.3]{};
            \draw (2.center) -- (23.center) -- (3.center);
            \draw (5.center) -- (56.center) -- (6.center);
            \draw (23.center) -- (2356.center) -- (56.center);
            \draw (1.center) -- (12356.center) -- (2356.center);
            \draw (4.center) -- (123456.center) -- (12356.center);
            \draw (7.center) -- (1234567.center) -- (123456.center);
            \draw (8.center) -- (12345678.center) -- (1234567.center);
        \end{tikzpicture}
        \caption{Ultrametric returned by the UPGMA algorithm.}\label{fig:upgma}
    \end{figure}
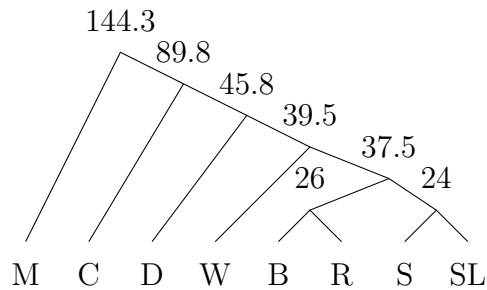
    
    Theorem \ref{thm:verticesAlgorithm} suggests an algorithm for computing
    a generating set of the set of ultrametrics $l^\infty$-nearest to a given dissimilarity map.
    This consists of computing all nonempty $S_i$'s and removing all ultrametrics that have more than one mobile internal node.
    Applying this to the dataset in Figure \ref{fig:data}
    gives us the twenty ultrametrics displayed in Table \ref{table:tropicalVertices}.
    Four different tree topologies appear;
    {for example, note that the topologies of the first, second, eighth, and fourteenth
    ultrametrics in the row-major order of Table~\ref{table:tropicalVertices} are distinct.}

    The UPGMA algorithm always returns an ultrametric.
    Figure \ref{fig:upgma} shows the ultrametric computed by the UPGMA algorithm when applied to the dataset
    given in Figure \ref{fig:data} (see \cite[pp.162-166]{felsenstein2004inferring}).
    No ultrametric sharing the topology of the ultrametric shown in Figure \ref{fig:upgma}
    will be $l^\infty$-nearest to the data.
    To see this, note that among the ultrametrics displayed in Table \ref{table:tropicalVertices},
    the distance between weasel and seal is 42 or 43,
    and that the distance between dog and seal is always 41.
    Since the set of $l^\infty$-nearest ultrametrics is tropically convex,
    any ultrametric $l^\infty$-nearest to the data will have the distance between
    weasel and seal strictly greater than the distance between dog and seal.
    However, the opposite relation will be true in any ultrametric whose topology
    is the tree displayed in Figure \ref{fig:upgma}.

    \newpage
	\begin{center}
    {\small
    \begin{longtable}{l l l}
        \caption[Superset of tropical vertices]{A set of ultrametrics
        whose tropical convex hull is the set of ultrametrics $l^\infty$-nearest to the dataset in Figure \ref{fig:data}.} \label{table:tropicalVertices} \\ 
        \endfirsthead
        \multicolumn{3}{c}%
        {{\tablename\ \thetable{} -- continued from previous page}}\\
        \hline
        \endhead
        \hline \multicolumn{3}{r}{{Continued on next page}}
        \endfoot
        \endlastfoot
        \begin{tikzpicture}[scale=0.42]
            \node (8) at (0,0)[label=below:M]{};
            \node (7) at (2,0)[label=below:C]{};
            \node (4) at (4,0)[label=below:W]{};
            \node (1) at (6,0)[label=below:D]{};
            \node (2) at (8,0)[label=below:B]{};
            \node (3) at (10,0)[label=below:R]{};
            \node (5) at (12,0)[label=below:S]{};
            \node (6) at (14,0)[label=below:SL]{};
            \node (23) at (9,1)[label=above:17]{};
            \node (56) at (13,1)[label=above:15]{};
            \node (2356) at (11.5,2)[label=above:35]{};
            \node (12356) at (9,3)[label=above:41]{};
            \node (123456) at (7,4)[label=above:42]{};
            \node (1234567) at (5,5)[label=above:89]{};
            \node (12345678) at (3,6)[label=above:143]{};
            \draw (2.center) -- (23.center) -- (3.center);
            \draw (5.center) -- (56.center) -- (6.center);
            \draw (23.center) -- (2356.center) -- (56.center);
            \draw (1.center) -- (12356.center) -- (2356.center);
            \draw (4.center) -- (123456.center) -- (12356.center);
            \draw (7.center) -- (1234567.center) -- (123456.center);
            \draw (8.center) -- (12345678.center) -- (1234567.center);
        \end{tikzpicture}& &
        \begin{tikzpicture}[scale=0.42]
            \node (8) at (0,0)[label=below:M]{};
            \node (7) at (2,0)[label=below:C]{};
            \node (4) at (4,0)[label=below:W]{};
            \node (1) at (6,0)[label=below:D]{};
            \node (2) at (8,0)[label=below:B]{};
            \node (3) at (10,0)[label=below:R]{};
            \node (5) at (12,0)[label=below:S]{};
            \node (6) at (14,0)[label=below:SL]{};
            \node (56) at (13,1)[label=above:15]{};
            \node (2356) at (11.5,2)[label=above:35]{};
            \node (12356) at (9,3)[label=above:41]{};
            \node (123456) at (7,4)[label=above:42]{};
            \node (1234567) at (5,5)[label=above:89]{};
            \node (12345678) at (3,6)[label=above:143]{};
            \draw (2.center) -- (2356.center) -- (3.center);
            \draw (5.center) -- (56.center) -- (6.center);
            \draw (2356.center) -- (56.center);
            \draw (1.center) -- (12356.center) -- (2356.center);
            \draw (4.center) -- (123456.center) -- (12356.center);
            \draw (7.center) -- (1234567.center) -- (123456.center);
            \draw (8.center) -- (12345678.center) -- (1234567.center);
        \end{tikzpicture}\\
        \begin{tikzpicture}[scale=0.42]
            \node (8) at (0,0)[label=below:M]{};
            \node (7) at (2,0)[label=below:C]{};
            \node (4) at (4,0)[label=below:W]{};
            \node (1) at (6,0)[label=below:D]{};
            \node (2) at (8,0)[label=below:B]{};
            \node (3) at (10,0)[label=below:R]{};
            \node (5) at (12,0)[label=below:S]{};
            \node (6) at (14,0)[label=below:SL]{};
            \node (23) at (9,1)[label=above:17]{};
            \node (56) at (13,1)[label=above:33]{};
            \node (2356) at (11.5,2)[label=above:35]{};
            \node (12356) at (9,3)[label=above:41]{};
            \node (123456) at (7,4)[label=above:42]{};
            \node (1234567) at (5,5)[label=above:89]{};
            \node (12345678) at (3,6)[label=above:143]{};
            \draw (2.center) -- (23.center) -- (3.center);
            \draw (5.center) -- (56.center) -- (6.center);
            \draw (23.center) -- (2356.center) -- (56.center);
            \draw (1.center) -- (12356.center) -- (2356.center);
            \draw (4.center) -- (123456.center) -- (12356.center);
            \draw (7.center) -- (1234567.center) -- (123456.center);
            \draw (8.center) -- (12345678.center) -- (1234567.center);
        \end{tikzpicture}& &
        \begin{tikzpicture}[scale=0.42]
            \node (8) at (0,0)[label=below:M]{};
            \node (7) at (2,0)[label=below:C]{};
            \node (4) at (4,0)[label=below:W]{};
            \node (1) at (6,0)[label=below:D]{};
            \node (2) at (8,0)[label=below:B]{};
            \node (3) at (10,0)[label=below:R]{};
            \node (5) at (12,0)[label=below:S]{};
            \node (6) at (14,0)[label=below:SL]{};
            \node (23) at (9,1)[label=above:17]{};
            \node (56) at (13,1)[label=above:15]{};
            \node (2356) at (11.5,2)[label=above:38]{};
            \node (12356) at (9,3)[label=above:41]{};
            \node (123456) at (7,4)[label=above:42]{};
            \node (1234567) at (5,5)[label=above:89]{};
            \node (12345678) at (3,6)[label=above:143]{};
            \draw (2.center) -- (23.center) -- (3.center);
            \draw (5.center) -- (56.center) -- (6.center);
            \draw (23.center) -- (2356.center) -- (56.center);
            \draw (1.center) -- (12356.center) -- (2356.center);
            \draw (4.center) -- (123456.center) -- (12356.center);
            \draw (7.center) -- (1234567.center) -- (123456.center);
            \draw (8.center) -- (12345678.center) -- (1234567.center);
        \end{tikzpicture} \\
        \begin{tikzpicture}[scale=0.42]
            \node (8) at (0,0)[label=below:M]{};
            \node (7) at (2,0)[label=below:C]{};
            \node (4) at (4,0)[label=below:W]{};
            \node (1) at (6,0)[label=below:D]{};
            \node (2) at (8,0)[label=below:B]{};
            \node (3) at (10,0)[label=below:R]{};
            \node (5) at (12,0)[label=below:S]{};
            \node (6) at (14,0)[label=below:SL]{};
            \node (23) at (9,1)[label=above:17]{};
            \node (56) at (13,1)[label=above:15]{};
            \node (2356) at (11.5,2)[label=above:35]{};
            \node (12356) at (9,3)[label=above:41]{};
            \node (123456) at (7,4)[label=above:43]{};
            \node (1234567) at (5,5)[label=above:89]{};
            \node (12345678) at (3,6)[label=above:143]{};
            \draw (2.center) -- (23.center) -- (3.center);
            \draw (5.center) -- (56.center) -- (6.center);
            \draw (23.center) -- (2356.center) -- (56.center);
            \draw (1.center) -- (12356.center) -- (2356.center);
            \draw (4.center) -- (123456.center) -- (12356.center);
            \draw (7.center) -- (1234567.center) -- (123456.center);
            \draw (8.center) -- (12345678.center) -- (1234567.center);
        \end{tikzpicture}& & 
        \begin{tikzpicture}[scale=0.42]
            \node (8) at (0,0)[label=below:M]{};
            \node (7) at (2,0)[label=below:C]{};
            \node (4) at (4,0)[label=below:W]{};
            \node (1) at (6,0)[label=below:D]{};
            \node (2) at (8,0)[label=below:B]{};
            \node (3) at (10,0)[label=below:R]{};
            \node (5) at (12,0)[label=below:S]{};
            \node (6) at (14,0)[label=below:SL]{};
            \node (23) at (9,1)[label=above:17]{};
            \node (56) at (13,1)[label=above:15]{};
            \node (2356) at (11.5,2)[label=above:35]{};
            \node (12356) at (9,3)[label=above:41]{};
            \node (123456) at (7,4)[label=above:42]{};
            \node (1234567) at (5,5)[label=above:93]{};
            \node (12345678) at (3,6)[label=above:143]{};
            \draw (2.center) -- (23.center) -- (3.center);
            \draw (5.center) -- (56.center) -- (6.center);
            \draw (23.center) -- (2356.center) -- (56.center);
            \draw (1.center) -- (12356.center) -- (2356.center);
            \draw (4.center) -- (123456.center) -- (12356.center);
            \draw (7.center) -- (1234567.center) -- (123456.center);
            \draw (8.center) -- (12345678.center) -- (1234567.center);
        \end{tikzpicture} \\
        \begin{tikzpicture}[scale=0.42]
            \node (8) at (0,0)[label=below:M]{};
            \node (7) at (2,0)[label=below:C]{};
            \node (4) at (4,0)[label=below:W]{};
            \node (1) at (6,0)[label=below:D]{};
            \node (2) at (8,0)[label=below:B]{};
            \node (3) at (10,0)[label=below:R]{};
            \node (5) at (12,0)[label=below:S]{};
            \node (6) at (14,0)[label=below:SL]{};
            \node (23) at (9,1)[label=above:17]{};
            \node (56) at (13,1)[label=above:15]{};
            \node (2356) at (11.5,2)[label=above:35]{};
            \node (12356) at (9,3)[label=above:41]{};
            \node (123456) at (7,4)[label=above:42]{};
            \node (1234567) at (5,5)[label=above:93]{};
            \node (12345678) at (3,6)[label=above:145]{};
            \draw (2.center) -- (23.center) -- (3.center);
            \draw (5.center) -- (56.center) -- (6.center);
            \draw (23.center) -- (2356.center) -- (56.center);
            \draw (1.center) -- (12356.center) -- (2356.center);
            \draw (4.center) -- (123456.center) -- (12356.center);
            \draw (7.center) -- (1234567.center) -- (123456.center);
            \draw (8.center) -- (12345678.center) -- (1234567.center);
        \end{tikzpicture} & &
        \begin{tikzpicture}[scale=0.42]
            \node (8) at (0,0)[label=below:M]{};
            \node (7) at (2,0)[label=below:C]{};
            \node (4) at (4,0)[label=below:W]{};
            \node (1) at (6,0)[label=below:D]{};
            \node (2) at (10,0)[label=below:B]{};
            \node (3) at (8,0)[label=below:R]{};
            \node (5) at (12,0)[label=below:S]{};
            \node (6) at (14,0)[label=below:SL]{};
            \node (56) at (3+60/6,15/2-1/2*(3+60/6)[label=above:15]{};
            \node (256) at (3+50/6,15/2-1/2*(3+50/6)[label=above:24]{};
            \node (2356) at (3+40/6,15/2-1/2*(3+40/6)[label=above:35]{};
            \node (12356) at (3+30/6,15/2-1/2*(3+30/6)[label=above:41]{};
            \node (123456) at (3+20/6,15/2-1/2*(3+20/6)[label=above:42]{};
            \node (1234567) at (3 + 10/6,15/2-1/2*(3+10/6)[label=above:89]{};
            \node (12345678) at (3,6)[label=above:143]{};
            \draw (5.center) -- (56.center) -- (6.center);
            \draw (2.center) -- (256.center) -- (56.center);
            \draw (3.center) -- (2356.center) -- (256.center);
            \draw (1.center) -- (12356.center) -- (2356.center);
            \draw (4.center) -- (123456.center) -- (12356.center);
            \draw (7.center) -- (1234567.center) -- (123456.center);
            \draw (8.center) -- (12345678.center) -- (1234567.center);
        \end{tikzpicture} \\
        \begin{tikzpicture}[scale=0.42]
            \node (8) at (0,0)[label=below:M]{};
            \node (7) at (2,0)[label=below:C]{};
            \node (4) at (4,0)[label=below:W]{};
            \node (1) at (6,0)[label=below:D]{};
            \node (2) at (10,0)[label=below:B]{};
            \node (3) at (8,0)[label=below:R]{};
            \node (5) at (12,0)[label=below:S]{};
            \node (6) at (14,0)[label=below:SL]{};
            \node (56) at (3+60/6,15/2-1/2*(3+60/6)[label=above:15]{};
            \node (256) at (3+50/6,15/2-1/2*(3+50/6)[label=above:33]{};
            \node (2356) at (3+40/6,15/2-1/2*(3+40/6)[label=above:35]{};
            \node (12356) at (3+30/6,15/2-1/2*(3+30/6)[label=above:41]{};
            \node (123456) at (3+20/6,15/2-1/2*(3+20/6)[label=above:42]{};
            \node (1234567) at (3 + 10/6,15/2-1/2*(3+10/6)[label=above:89]{};
            \node (12345678) at (3,6)[label=above:143]{};
            \draw (5.center) -- (56.center) -- (6.center);
            \draw (2.center) -- (256.center) -- (56.center);
            \draw (3.center) -- (2356.center) -- (256.center);
            \draw (1.center) -- (12356.center) -- (2356.center);
            \draw (4.center) -- (123456.center) -- (12356.center);
            \draw (7.center) -- (1234567.center) -- (123456.center);
            \draw (8.center) -- (12345678.center) -- (1234567.center);
        \end{tikzpicture}& &
        \begin{tikzpicture}[scale=0.42]
            \node (8) at (0,0)[label=below:M]{};
            \node (7) at (2,0)[label=below:C]{};
            \node (4) at (4,0)[label=below:W]{};
            \node (1) at (6,0)[label=below:D]{};
            \node (2) at (10,0)[label=below:B]{};
            \node (3) at (8,0)[label=below:R]{};
            \node (5) at (12,0)[label=below:S]{};
            \node (6) at (14,0)[label=below:SL]{};
            \node (56) at (3+60/6,15/2-1/2*(3+60/6)[label=above:15]{};
            \node (256) at (3+50/6,15/2-1/2*(3+50/6)[label=above:24]{};
            \node (2356) at (3+40/6,15/2-1/2*(3+40/6)[label=above:38]{};
            \node (12356) at (3+30/6,15/2-1/2*(3+30/6)[label=above:41]{};
            \node (123456) at (3+20/6,15/2-1/2*(3+20/6)[label=above:42]{};
            \node (1234567) at (3 + 10/6,15/2-1/2*(3+10/6)[label=above:89]{};
            \node (12345678) at (3,6)[label=above:143]{};
            \draw (5.center) -- (56.center) -- (6.center);
            \draw (2.center) -- (256.center) -- (56.center);
            \draw (3.center) -- (2356.center) -- (256.center);
            \draw (1.center) -- (12356.center) -- (2356.center);
            \draw (4.center) -- (123456.center) -- (12356.center);
            \draw (7.center) -- (1234567.center) -- (123456.center);
            \draw (8.center) -- (12345678.center) -- (1234567.center);
        \end{tikzpicture}\\
        \begin{tikzpicture}[scale=0.42]
            \node (8) at (0,0)[label=below:M]{};
            \node (7) at (2,0)[label=below:C]{};
            \node (4) at (4,0)[label=below:W]{};
            \node (1) at (6,0)[label=below:D]{};
            \node (2) at (10,0)[label=below:B]{};
            \node (3) at (8,0)[label=below:R]{};
            \node (5) at (12,0)[label=below:S]{};
            \node (6) at (14,0)[label=below:SL]{};
            \node (56) at (3+60/6,15/2-1/2*(3+60/6)[label=above:15]{};
            \node (256) at (3+50/6,15/2-1/2*(3+50/6)[label=above:24]{};
            \node (2356) at (3+40/6,15/2-1/2*(3+40/6)[label=above:35]{};
            \node (12356) at (3+30/6,15/2-1/2*(3+30/6)[label=above:41]{};
            \node (123456) at (3+20/6,15/2-1/2*(3+20/6)[label=above:43]{};
            \node (1234567) at (3 + 10/6,15/2-1/2*(3+10/6)[label=above:89]{};
            \node (12345678) at (3,6)[label=above:143]{};
            \draw (5.center) -- (56.center) -- (6.center);
            \draw (2.center) -- (256.center) -- (56.center);
            \draw (3.center) -- (2356.center) -- (256.center);
            \draw (1.center) -- (12356.center) -- (2356.center);
            \draw (4.center) -- (123456.center) -- (12356.center);
            \draw (7.center) -- (1234567.center) -- (123456.center);
            \draw (8.center) -- (12345678.center) -- (1234567.center);
        \end{tikzpicture}& &
        \begin{tikzpicture}[scale=0.42]
            \node (8) at (0,0)[label=below:M]{};
            \node (7) at (2,0)[label=below:C]{};
            \node (4) at (4,0)[label=below:W]{};
            \node (1) at (6,0)[label=below:D]{};
            \node (2) at (10,0)[label=below:B]{};
            \node (3) at (8,0)[label=below:R]{};
            \node (5) at (12,0)[label=below:S]{};
            \node (6) at (14,0)[label=below:SL]{};
            \node (56) at (3+60/6,15/2-1/2*(3+60/6)[label=above:15]{};
            \node (256) at (3+50/6,15/2-1/2*(3+50/6)[label=above:24]{};
            \node (2356) at (3+40/6,15/2-1/2*(3+40/6)[label=above:35]{};
            \node (12356) at (3+30/6,15/2-1/2*(3+30/6)[label=above:41]{};
            \node (123456) at (3+20/6,15/2-1/2*(3+20/6)[label=above:42]{};
            \node (1234567) at (3 + 10/6,15/2-1/2*(3+10/6)[label=above:93]{};
            \node (12345678) at (3,6)[label=above:143]{};
            \draw (5.center) -- (56.center) -- (6.center);
            \draw (2.center) -- (256.center) -- (56.center);
            \draw (3.center) -- (2356.center) -- (256.center);
            \draw (1.center) -- (12356.center) -- (2356.center);
            \draw (4.center) -- (123456.center) -- (12356.center);
            \draw (7.center) -- (1234567.center) -- (123456.center);
            \draw (8.center) -- (12345678.center) -- (1234567.center);
        \end{tikzpicture}\\
        \begin{tikzpicture}[scale=0.42]
            \node (8) at (0,0)[label=below:M]{};
            \node (7) at (2,0)[label=below:C]{};
            \node (4) at (4,0)[label=below:W]{};
            \node (1) at (6,0)[label=below:D]{};
            \node (2) at (10,0)[label=below:B]{};
            \node (3) at (8,0)[label=below:R]{};
            \node (5) at (12,0)[label=below:S]{};
            \node (6) at (14,0)[label=below:SL]{};
            \node (56) at (3+60/6,15/2-1/2*(3+60/6)[label=above:15]{};
            \node (256) at (3+50/6,15/2-1/2*(3+50/6)[label=above:24]{};
            \node (2356) at (3+40/6,15/2-1/2*(3+40/6)[label=above:35]{};
            \node (12356) at (3+30/6,15/2-1/2*(3+30/6)[label=above:41]{};
            \node (123456) at (3+20/6,15/2-1/2*(3+20/6)[label=above:42]{};
            \node (1234567) at (3 + 10/6,15/2-1/2*(3+10/6)[label=above:89]{};
            \node (12345678) at (3,6)[label=above:145]{};
            \draw (5.center) -- (56.center) -- (6.center);
            \draw (2.center) -- (256.center) -- (56.center);
            \draw (3.center) -- (2356.center) -- (256.center);
            \draw (1.center) -- (12356.center) -- (2356.center);
            \draw (4.center) -- (123456.center) -- (12356.center);
            \draw (7.center) -- (1234567.center) -- (123456.center);
            \draw (8.center) -- (12345678.center) -- (1234567.center);
        \end{tikzpicture}& &
        \begin{tikzpicture}[scale=0.42]
            \node (8) at (0,0)[label=below:M]{};
            \node (7) at (2,0)[label=below:C]{};
            \node (4) at (4,0)[label=below:W]{};
            \node (1) at (6,0)[label=below:D]{};
            \node (6) at (10,0)[label=below:SL]{};
            \node (3) at (8,0)[label=below:R]{};
            \node (2) at (12,0)[label=below:B]{};
            \node (5) at (14,0)[label=below:S]{};
            \node (25) at (3+60/6,15/2-1/2*(3+60/6)[label=above:20]{};
            \node (256) at (3+50/6,15/2-1/2*(3+50/6)[label=above:24]{};
            \node (2356) at (3+40/6,15/2-1/2*(3+40/6)[label=above:35]{};
            \node (12356) at (3+30/6,15/2-1/2*(3+30/6)[label=above:41]{};
            \node (123456) at (3+20/6,15/2-1/2*(3+20/6)[label=above:42]{};
            \node (1234567) at (3 + 10/6,15/2-1/2*(3+10/6)[label=above:89]{};
            \node (12345678) at (3,6)[label=above:143]{};
            \draw (5.center) -- (25.center) -- (2.center);
            \draw (6.center) -- (256.center) -- (25.center);
            \draw (3.center) -- (2356.center) -- (256.center);
            \draw (1.center) -- (12356.center) -- (2356.center);
            \draw (4.center) -- (123456.center) -- (12356.center);
            \draw (7.center) -- (1234567.center) -- (123456.center);
            \draw (8.center) -- (12345678.center) -- (1234567.center);
        \end{tikzpicture} \\
        \begin{tikzpicture}[scale=0.42]
            \node (8) at (0,0)[label=below:M]{};
            \node (7) at (2,0)[label=below:C]{};
            \node (4) at (4,0)[label=below:W]{};
            \node (1) at (6,0)[label=below:D]{};
            \node (6) at (10,0)[label=below:SL]{};
            \node (3) at (8,0)[label=below:R]{};
            \node (2) at (12,0)[label=below:B]{};
            \node (5) at (14,0)[label=below:S]{};
            \node (25) at (3+60/6,15/2-1/2*(3+60/6)[label=above:20]{};
            \node (256) at (3+50/6,15/2-1/2*(3+50/6)[label=above:33]{};
            \node (2356) at (3+40/6,15/2-1/2*(3+40/6)[label=above:35]{};
            \node (12356) at (3+30/6,15/2-1/2*(3+30/6)[label=above:41]{};
            \node (123456) at (3+20/6,15/2-1/2*(3+20/6)[label=above:42]{};
            \node (1234567) at (3 + 10/6,15/2-1/2*(3+10/6)[label=above:89]{};
            \node (12345678) at (3,6)[label=above:143]{};
            \draw (5.center) -- (25.center) -- (2.center);
            \draw (6.center) -- (256.center) -- (25.center);
            \draw (3.center) -- (2356.center) -- (256.center);
            \draw (1.center) -- (12356.center) -- (2356.center);
            \draw (4.center) -- (123456.center) -- (12356.center);
            \draw (7.center) -- (1234567.center) -- (123456.center);
            \draw (8.center) -- (12345678.center) -- (1234567.center);
        \end{tikzpicture}& &
        \begin{tikzpicture}[scale=0.42]
            \node (8) at (0,0)[label=below:M]{};
            \node (7) at (2,0)[label=below:C]{};
            \node (4) at (4,0)[label=below:W]{};
            \node (1) at (6,0)[label=below:D]{};
            \node (6) at (10,0)[label=below:SL]{};
            \node (3) at (8,0)[label=below:R]{};
            \node (2) at (12,0)[label=below:B]{};
            \node (5) at (14,0)[label=below:S]{};
            \node (25) at (3+60/6,15/2-1/2*(3+60/6)[label=above:20]{};
            \node (256) at (3+50/6,15/2-1/2*(3+50/6)[label=above:24]{};
            \node (2356) at (3+40/6,15/2-1/2*(3+40/6)[label=above:38]{};
            \node (12356) at (3+30/6,15/2-1/2*(3+30/6)[label=above:41]{};
            \node (123456) at (3+20/6,15/2-1/2*(3+20/6)[label=above:42]{};
            \node (1234567) at (3 + 10/6,15/2-1/2*(3+10/6)[label=above:89]{};
            \node (12345678) at (3,6)[label=above:143]{};
            \draw (5.center) -- (25.center) -- (2.center);
            \draw (6.center) -- (256.center) -- (25.center);
            \draw (3.center) -- (2356.center) -- (256.center);
            \draw (1.center) -- (12356.center) -- (2356.center);
            \draw (4.center) -- (123456.center) -- (12356.center);
            \draw (7.center) -- (1234567.center) -- (123456.center);
            \draw (8.center) -- (12345678.center) -- (1234567.center);
        \end{tikzpicture}\\
        \begin{tikzpicture}[scale=0.42]
            \node (8) at (0,0)[label=below:M]{};
            \node (7) at (2,0)[label=below:C]{};
            \node (4) at (4,0)[label=below:W]{};
            \node (1) at (6,0)[label=below:D]{};
            \node (6) at (10,0)[label=below:SL]{};
            \node (3) at (8,0)[label=below:R]{};
            \node (2) at (12,0)[label=below:B]{};
            \node (5) at (14,0)[label=below:S]{};
            \node (25) at (3+60/6,15/2-1/2*(3+60/6)[label=above:20]{};
            \node (256) at (3+50/6,15/2-1/2*(3+50/6)[label=above:24]{};
            \node (2356) at (3+40/6,15/2-1/2*(3+40/6)[label=above:35]{};
            \node (12356) at (3+30/6,15/2-1/2*(3+30/6)[label=above:41]{};
            \node (123456) at (3+20/6,15/2-1/2*(3+20/6)[label=above:43]{};
            \node (1234567) at (3 + 10/6,15/2-1/2*(3+10/6)[label=above:89]{};
            \node (12345678) at (3,6)[label=above:143]{};
            \draw (5.center) -- (25.center) -- (2.center);
            \draw (6.center) -- (256.center) -- (25.center);
            \draw (3.center) -- (2356.center) -- (256.center);
            \draw (1.center) -- (12356.center) -- (2356.center);
            \draw (4.center) -- (123456.center) -- (12356.center);
            \draw (7.center) -- (1234567.center) -- (123456.center);
            \draw (8.center) -- (12345678.center) -- (1234567.center);
        \end{tikzpicture}& &
        \begin{tikzpicture}[scale=0.42]
            \node (8) at (0,0)[label=below:M]{};
            \node (7) at (2,0)[label=below:C]{};
            \node (4) at (4,0)[label=below:W]{};
            \node (1) at (6,0)[label=below:D]{};
            \node (6) at (10,0)[label=below:SL]{};
            \node (3) at (8,0)[label=below:R]{};
            \node (2) at (12,0)[label=below:B]{};
            \node (5) at (14,0)[label=below:S]{};
            \node (25) at (3+60/6,15/2-1/2*(3+60/6)[label=above:20]{};
            \node (256) at (3+50/6,15/2-1/2*(3+50/6)[label=above:24]{};
            \node (2356) at (3+40/6,15/2-1/2*(3+40/6)[label=above:35]{};
            \node (12356) at (3+30/6,15/2-1/2*(3+30/6)[label=above:41]{};
            \node (123456) at (3+20/6,15/2-1/2*(3+20/6)[label=above:42]{};
            \node (1234567) at (3 + 10/6,15/2-1/2*(3+10/6)[label=above:93]{};
            \node (12345678) at (3,6)[label=above:143]{};
            \draw (5.center) -- (25.center) -- (2.center);
            \draw (6.center) -- (256.center) -- (25.center);
            \draw (3.center) -- (2356.center) -- (256.center);
            \draw (1.center) -- (12356.center) -- (2356.center);
            \draw (4.center) -- (123456.center) -- (12356.center);
            \draw (7.center) -- (1234567.center) -- (123456.center);
            \draw (8.center) -- (12345678.center) -- (1234567.center);
        \end{tikzpicture}\\
        \begin{tikzpicture}[scale=0.42]
            \node (8) at (0,0)[label=below:M]{};
            \node (7) at (2,0)[label=below:C]{};
            \node (4) at (4,0)[label=below:W]{};
            \node (1) at (6,0)[label=below:D]{};
            \node (6) at (10,0)[label=below:SL]{};
            \node (3) at (8,0)[label=below:R]{};
            \node (2) at (12,0)[label=below:B]{};
            \node (5) at (14,0)[label=below:S]{};
            \node (25) at (3+60/6,15/2-1/2*(3+60/6)[label=above:20]{};
            \node (256) at (3+50/6,15/2-1/2*(3+50/6)[label=above:24]{};
            \node (2356) at (3+40/6,15/2-1/2*(3+40/6)[label=above:35]{};
            \node (12356) at (3+30/6,15/2-1/2*(3+30/6)[label=above:41]{};
            \node (123456) at (3+20/6,15/2-1/2*(3+20/6)[label=above:42]{};
            \node (1234567) at (3 + 10/6,15/2-1/2*(3+10/6)[label=above:89]{};
            \node (12345678) at (3,6)[label=above:145]{};
            \draw (5.center) -- (25.center) -- (2.center);
            \draw (6.center) -- (256.center) -- (25.center);
            \draw (3.center) -- (2356.center) -- (256.center);
            \draw (1.center) -- (12356.center) -- (2356.center);
            \draw (4.center) -- (123456.center) -- (12356.center);
            \draw (7.center) -- (1234567.center) -- (123456.center);
            \draw (8.center) -- (12345678.center) -- (1234567.center);
        \end{tikzpicture}& &
        \begin{tikzpicture}[scale=0.42]
            \node (8) at (0,0)[label=below:M]{};
            \node (7) at (2,0)[label=below:C]{};
            \node (4) at (4,0)[label=below:W]{};
            \node (1) at (6,0)[label=below:D]{};
            \node (5) at (10,0)[label=below:S]{};
            \node (3) at (8,0)[label=below:R]{};
            \node (2) at (12,0)[label=below:B]{};
            \node (6) at (14,0)[label=below:SL]{};
            \node (26) at (3+60/6,15/2-1/2*(3+60/6)[label=above:24]{};
            \node (256) at (3+50/6,15/2-1/2*(3+50/6)[label=above:33]{};
            \node (2356) at (3+40/6,15/2-1/2*(3+40/6)[label=above:35]{};
            \node (12356) at (3+30/6,15/2-1/2*(3+30/6)[label=above:41]{};
            \node (123456) at (3+20/6,15/2-1/2*(3+20/6)[label=above:42]{};
            \node (1234567) at (3 + 10/6,15/2-1/2*(3+10/6)[label=above:89]{};
            \node (12345678) at (3,6)[label=above:143]{};
            \draw (6.center) -- (26.center) -- (2.center);
            \draw (5.center) -- (256.center) -- (26.center);
            \draw (3.center) -- (2356.center) -- (256.center);
            \draw (1.center) -- (12356.center) -- (2356.center);
            \draw (4.center) -- (123456.center) -- (12356.center);
            \draw (7.center) -- (1234567.center) -- (123456.center);
            \draw (8.center) -- (12345678.center) -- (1234567.center);
        \end{tikzpicture}
    \end{longtable}}
    \end{center}

\bibliography{trees}
\bibliographystyle{plain}

\end{document}